\newcommand{\range}{\text{range}}
\newcommand{\diag}{\text{diag}}
\newcommand{\convunif}{\overset{u}{\to}}
\newcommand{\convlp}{\overset{L^{1}(\mu)}{\to}}
\newcommand{\convas}{\overset{\text{a.e. }\mu}{\to}}
\newtheorem{theorem}{Theorem}
\newtheorem{corollary}{Corollary}
\newtheorem{condition}{Condition}
\newtheorem{definition}{Definition}
\newtheorem{example}{Example}
\newtheorem{lemma}{Lemma}
\newtheorem{proposition}{Proposition}
\author[1]{Ezequiel Smucler}
\author[2]{James M. Robins}
\author[3]{Andrea Rotnitzky}
\affil[1]{Glovo}
\affil[2]{Harvard T.H. Chan School of Public Health}
\affil[3]{University of Washington, Department of Biostatistics}
\begin{document}

\title{On the asymptotic validity of confidence sets for linear functionals of solutions to integral equations}

% \tableofcontents
\maketitle

\begin{abstract}
    This paper examines the construction of confidence sets for parameters defined as linear functionals of a function of $W$ and $X$ whose conditional mean given $Z$ and $X$ equals the conditional mean of another variable $Y$ given $Z$ and $X$.
    Many estimands of interest in causal inference can be expressed in this form, including the average treatment effect in proximal causal inference and treatment effect contrasts in instrumental variable models.
    We derive a necessary condition for a confidence set to be uniformly valid over a model that allows for the dependence between $W$ and $Z$ given $X$ to be arbitrarily weak. Specifically, we show that for any such confidence set, there must exist some laws in the model under which, with high probability, the confidence set has a diameter greater than or equal to the diameter of the parameter’s range. In particular, consistent with the weak instruments literature, Wald confidence intervals are not uniformly valid over the aforementioned model when the parameter's range is infinite. Furthermore, we argue that inverting the score test, a successful approach in that literature, generally fails for the broader class of parameters considered here. We present a method for constructing uniformly valid confidence sets in the special case where all variables, but possibly $Y$, are binary and discuss its limitations. Finally, we emphasize that developing uniformly valid confidence sets for the class of parameters considered in this paper remains an open problem.
\end{abstract}
\section{Introduction}

This work is motivated by the growing interest in causal inference methods that account for unmeasured confounders. Two prominent approaches, instrumental variables and proximal causal inference, identify causal parameters that can be expressed as continuous linear functionals evaluated at a function $g(W,X)$ whose conditional mean given $Z$ and $X$ equals the conditional mean of another variable $Y$ given $Z$ and $X$. 
Substantial progress has been made in estimating these parameters, but the construction of confidence sets that are uniformly valid over so called weak dependence models, that is, models that allow for arbitrarily weak dependence between $W$ and $Z$ given $X$ remains unexplored. This paper aims to fill that gap. 
Note that if $W$ is independent of $Z$ given $X$, then, except in the trivial case where the conditional mean of $Y$ given $Z$ and $X$ does not depend on $Z$, no such function $g$ exists, and the parameter of interest is not well defined. Thus, inference is expected to be challenging under weak dependence models.

Formally, we define the parameter of interest under the assumption that we observe a sample of $n$ independent and identically distributed copies of $O = (Y, Z, W, X)$ from a distribution $P$, where $Y$ is a scalar random variable, and $Z$, $W$, and $X$ are random vectors of dimensions $d_z$, $d_w$, and $d_x$, respectively.
The parameter of interest is defined as
\begin{equation}
    \varphi(P):= E_{P}\left\lbrace m(O,g_{P})\right\rbrace, 
    \label{eq:def_phi}
\end{equation}
where the map $E_{P}\left\lbrace m(O,\cdot)\right\rbrace :L^{2}\lbrace P_{(W,X)}\rbrace \to \mathbb{R}$ is linear and continuous and 
$g_{P}\in L^{2}\lbrace P_{(W,X)}\rbrace$ solves the integral equation 
\begin{equation}
    E_{P}\left\lbrace  g(W,X) \mid Z,X \right\rbrace = E_{P}\left\lbrace Y \mid Z,X \right\rbrace.
    \label{eq:integral_eq}
\end{equation}
Throughout if $V$ is a subvector of $O$, we let $P_{V}$ be the law of $V$ under $P$, and for any measure $\nu$ and $1\leq p <\infty$,  $L^{p}(\nu)$ stands for the usual vector space of functions $f:\mathbb{R} \times \mathbb{R}^{d_{z}} \times \mathbb{R}^{d_{w}}\times \mathbb{R}^{d_{x}}\to \mathbb{R}$ that satisfy $\int \vert f \vert^{p} d\nu < \infty$. 

Estimands of the form \eqref{eq:def_phi} fall within the class studied by \cite{chernozhukov2023simple}. As indicated by these authors and reviewed in Section \ref{sec:model},
examples of such estimands include the average treatment effect in proximal
causal inference \citep{Miao2018,tchetgen2020,kallus} and linear functionals of the nonparametric instrumental variables
regression function \citep{neweypowell}. In particular, with binary
instrument and treatment, along with baseline covariates, the latter
includes the average treatment effect estimand under
the model of \cite{wang2018bounded}. In the absence of covariates, this estimand agrees with the parameter that identifies the  local average treatment effect, under the model of
\cite{imbens}.

A substantial body of work proposes methods for constructing $\sqrt{n}$-consistent and asymptotically normal estimators for general estimands of the form \eqref{eq:def_phi} \citep{chernozhukov2023simple,bennett2022,bennett2023}, as well as for specific cases such as the average treatment effect in proximal causal inference \citep{Miao2018,tchetgen2020,kallus, cui2023}. In contrast, while confidence sets that remain uniformly asymptotically valid under models accommodating weak instruments have been extensively studied \citep{staiger1997instrumental,andrews2006optimal,andrews2019weak,ma2023}, their construction for generic functionals of the form \eqref{eq:def_phi} remains unexplored under models $\mathcal{M}$ that rule out conditional independence between $W$ and $Z$ given $X$ but allow arbitrarily weak dependence between them.

A confidence set $C_{n}$ for $\varphi(P)$ is asymptotically uniformly valid over a set of probability laws $\mathcal{M}$ at level at least $1-\alpha$ if
$$
\liminf_{n} \inf_{P\in\mathcal{M}} P\left\lbrace \varphi(P)\in C_{n} \right\rbrace \geq 1-\alpha.
$$
To discard trivial confidence sets, we will also require that there exists at least one law $P\in\mathcal{M}$ such that the confidence set is different from the real line with positive probability under $P$.

For practitioners, using confidence sets that are asymptotically uniform over rich classes of probability laws is important, because, for non-uniform confidence sets, at any finite sample size there exist data generating
distributions under which the coverage of the true parameter is less than the nominal $1-\alpha$.

In Section \ref{sec:main}, we show that Wald confidence intervals are not uniformly valid over weak dependence models. A practical implication is that when the dependence between $W$ and $Z$ given $X$ is suspected to be weak, Wald intervals should be avoided, as they may exhibit poor coverage. While similar results have been established in the Econometrics literature on the behaviour of Wald confidence intervals in linear instrumental variable models with weak instruments \citep{gleser,dufour}, our work is the first to establish the non-uniformity of Wald confidence intervals over weak dependence models for the broad class of estimands \eqref{eq:def_phi}.

We arrive at the non-uniformity of Wald confidence intervals by establishing a necessary condition for a confidence set to be uniformly valid over a weak dependence model $\mathcal{M}$. This condition states that, under some laws in $\mathcal{M}$, the confidence set must have, with high probability, a diameter that is not smaller than the diameter of $\lbrace \varphi(P): P\in\mathcal{M}\rbrace$, a requirement not met by Wald confidence intervals when the parameter's range is infinite, nor by sets with a diameter that converges uniformly to zero when the parameter's range is finite. Crucially, our results hold even when all variables have finite ranges,
and are thus unrelated to the curse of dimensionality, or to slow rates of convergence in estimating possibly non-smooth conditional expectations.  
Our results contribute
to the existing literature on impossibility results in Statistics,
dating back to \cite{kraft1955}. To prove our results, we build on previous work of \cite{gleser,dufour,Romano2004,Canay2013,impossible,Shah2020}. 

% In Section \ref{sec:main}, we show that under regularity conditions on the model $\mathcal{M}$ and the target parameter, there exist laws $P^{\ast} \notin \mathcal{M}$ where $W$ is conditionally independent of $Z$ given $X$, such that for any possible value $\zeta$ of the target parameter, there is a law $P \in \mathcal{M}$ arbitrarily close to $P^{\ast}$ with $\varphi(P) = \zeta$. We further
% show that this result has two important implications: 
% (i) a necessary condition for a confidence set to be asymptotically uniform
%     over $\mathcal{M}$ is that, under some data-generating laws in $\mathcal{M}$, the confidence set must have, with high probability, a diameter that is larger than the diameter of the set of values of  $\varphi
%     \left( P\right)$ as $P$ varies over $\mathcal{M}$,
%     (ii) 
% uniformly consistent estimators of $\varphi \left( P\right) $ under $
%     \mathcal{M}$ do not exist. 
%     An estimator $\widehat{\varphi}_{n}$ is uniformly consistent for $\varphi(P)$ over a set of probability laws $\mathcal{M}$ if 
% $\limsup_{n}\sup_{P\in\mathcal{M}}P\left\lbrace \left\vert \widehat{\varphi}_{n}-\varphi(P) \right\vert\geq \varepsilon\right\rbrace=0
% $ for every $\varepsilon>0$.

A natural first attempt to construct uniformly valid confidence sets for general parameters $\varphi(P)$ is to adapt the approach of \cite{anderson1949, stockwright, ma2023} who build confidence sets for the local average treatment effect estimand with a binary treatment and instrument by inverting the score test. However, in the Section \ref{sec:score} we show that this strategy relies on the specific structure of the local average treatment effect estimand and, perhaps surprisingly, does not extend to the broader class of estimands in \eqref{eq:def_phi}. 
In Section \ref{sec:binary}, we propose a method for constructing uniformly valid confidence sets for general estimands of the form \eqref{eq:def_phi} when all variables, but possibly $Y$, are binary. While this approach can in principle be extended to cases where all variables are finitely valued, we believe it leaves significant room for improvement. 
Thus, while we do not provide a fully satisfactory solution to the problem of constructing uniformly valid confidence sets for general estimands of the form \eqref{eq:def_phi}, we hope our results will encourage further research in this direction.

\section{The model}\label{sec:model}

The results that we will derive assume some regularity conditions on the
model $\mathcal{M}$ and the parameter $\varphi \left( P\right) $.
To start with, we shall assume that the law $P$ of $O$ 
belongs to the set $\mathcal{P}(\mu)$ of
Borel
probability measures over $\mathbb{R} \times \mathbb{R}^{d_{z}} \times \mathbb{R}^{d_{w}}\times\mathbb{R}^{d_{x}}$ that are absolutely continuous with respect to
some Borel measure $\mu$. We shall assume that $\mu $ satisfies the
following condition.

\begin{condition}\label{cond:product_measure}
    $\mu= \mu_{Y} \times \mu_{Z} \times \mu_{W}\times \mu_{X}$, where $\mu_{Y}$ is a Borel $\sigma$-finite measure on $\mathbb{R}$ that is either atomless or a counting measure on a finite set with two or more elements, $\mu_{X}$ is a  Borel $\sigma$-finite measure on $\mathbb{R}^{d_{x}}$ and one of the following holds:
    \begin{enumerate}
        \item $\mu_{Z}=\mu_{Z_{1}}\times \mu_{Z_{2}}$, $\mu_{W}=\mu_{W_{1}}\times \mu_{W_{2}}$, where $\mu_{Z_{1}}$ and $\mu_{W_{1}}$ are atomless Borel $\sigma$-finite measures on $\mathbb{R}^{d_{z_{1}}}$ and $\mathbb{R}^{d_{w_{1}}}$ respectively, and $\mu_{Z_{2}}$ and $\mu_{W_{2}}$ are  Borel $\sigma$-finite measures on $\mathbb{R}^{d_{z_{2}}}$ and $\mathbb{R}^{d_{w_{2}}}$.
        \item $\mu_{Z}$ and $\mu_{W}$  are counting measures on finite sets $\mathcal{Z}\subset \mathbb{R}^{d_{z}}$, $\mathcal{W}\subset \mathbb{R}^{d_{w}}$  and  $\#\mathcal{Z}=\#\mathcal{W}$.
    \end{enumerate}
\end{condition}
Thus our results apply, for example, to random vectors $Z$ and $W$ that each
have at least one continous component, as in such case, part 1 of Condition
\ref{cond:product_measure} holds. They also apply to $Z$ and $W$ discrete random variables with
finite supports of the same cardinality as in such case, part 2 of Condition
\ref{cond:product_measure} holds. 

We shall also assume that $P$ is such that the map 
\begin{equation}
E_{P}\left\lbrace  m\left( O,\cdot \right) \right\rbrace :L^{2}\left\lbrace  P_{(W,X)}\right\rbrace 
\mapsto \mathbb{R}\text{ is linear and continuous}  \label{eq:cont}.
\end{equation}
Next, we require that equation \eqref{eq:integral_eq} has a solution. Formally, if we let $%
T_{P}:L^{2}\left\lbrace P_{(W,X)}\right\rbrace \mapsto L^{2}\left\lbrace P_{(Z,X)}\right\rbrace $ be
defined as the conditional mean operator 
$$
(T_{P}g)(Z,X):=E_{P}\left\lbrace g(W,X)\mid Z,X\right\rbrace
$$
and we let $\range\left( T_{P}\right)$ denote the range of $T_{P},$ we
then require that $\mathcal{M}$ be comprised of laws $P$ such that $r_{P}$ belongs to $\range\left( T_{P}\right) $ where $r_{P}\left( Z,X\right) := E_{P}\left( Y|Z,X\right)
.$ We do not require that the solution to equation \eqref{eq:integral_eq} be unique, but we do
require that $\varphi \left( P\right) $ is well defined, meaning that if $%
g_{p}$ and $g_{P}^{\prime }$ are two solutions of equation \eqref{eq:integral_eq}, then $E_{P}%
\left\lbrace m\left( O,g_{P}\right) \right\rbrace =E_{P}\left\lbrace m\left( O,g_{P}^{\prime
}\right) \right\rbrace .$ This requirement can be formalized as follows. By \eqref{eq:cont}, the Riesz Representer Theorem establishes that
there exists $\alpha _{P}\in L^{2}\left\lbrace P_{(W,X)}\right\rbrace $ such that 
\[
    \varphi(P)= E_{P}\left\lbrace \alpha_{P}(W,X) g_{P}(W,X)\right\rbrace. 
\]
It then follows that $E_{P}\left\lbrace m\left( O,g_{P}\right) \right\rbrace =E_{P}\left\lbrace
m\left( O,g_{P}^{\prime }\right) \right\rbrace$ if and only if $E_{P}\left\lbrace
\alpha_{P}\left( W,X\right) \left\{ g_{P}-g_{P}^{\prime }\right\} \left(
W,X\right) \right\rbrace =0$. Since $g_{P}-g_{P}^{\prime }$ is an arbitrary
element of $\ker \left( T_{P}\right)$, the kernel of the operator $T_{P}$,
we then conclude that $\varphi$ is well defined if and only if $\alpha _{P}$
is in the orthogonal complement of $\ker \left( T_{P}\right)$. This
condition is equivalent to the condition that $\alpha _{P}$ is in the
closure of the range, $\range\left( T_{P}^{\prime }\right) $, of the
adjoint operator 
$T^{\prime}_{P}: L^{2}\lbrace P_{(Z,X)}\rbrace \to L^{2}\lbrace P_{(W,X)}\rbrace$, given by
$$
(T^{\prime}_{P}h)(W,X):=E_{P}\left\lbrace h(Z,X)\mid W,X\right\rbrace,
$$
for $h\in L^{2}\lbrace P_{(Z,X)}\rbrace $.
In fact, we will make
the stronger assumption that further requires that $\alpha _{P}$ is in $%
\range\left( T_{P}^{\prime }\right)$, a strengthening typically assumed in the lireature \citep{severini,Zhang2023,bennett2022}. 

Our model will also accommodate further restrictions on the law $P$. These include two type of restrictions. The first are constraints on the possible values of $\varphi(P)$, through restrictions on the causal estimand it identifies. Specifically, it may happen that it is a priori known that $\varphi(P)$ falls in $\mathcal{S}$, a Borel subset of the real line. For instance, this is the case in Examples \ref{ex:ATE_proximal}, \ref{ex:ATE_IV} and \ref{ex:LATE}  below, if the outcome is a binary random variable. The second are constraints that include
positivity assumptions, which are typically required to derive results on the asymptotic properties of estimators and confidence sets for $\varphi(P)$. Specifically, the second type of restriction, assumes that $P\in\mathcal{R}$, where $\mathcal{R}$ is a subset of $\mathcal{P}(\mu)$ that satisfies the following condition.  In what follows, for $P\in \mathcal{P}(\mu)$, we let $f_{P}$ be the density of $P$ with respect to $\mu$ and $\convas$ denotes $\mu$-almost everywhere convergence.
\begin{condition}\label{cond:R}
    If $P\in\mathcal{R}$ and $(P_{i})_{i\geq 1}$ is a sequence of laws in $\mathcal{P}(\mu)$ such that $f_{P_i}\convas f_{P}$ as $i\to\infty$, then there exists a subsequence of $(P_{i})_{i\geq 1}$ that belongs to $\mathcal{R}$.
\end{condition}
For instance, in Example \ref{ex:LATE} below, we may take $\mathcal{R}=\lbrace P\in \mathcal{P}(\mu): f_{P,Z}(Z) \in (c,1-c) \: , P-\text{almost surely}\rbrace$, for some fixed $c\in (0,1)$, where throughout, if $V$ is a subvector of $O$ and $P\in\mathcal{P}(\mu)$, we let $f_{P,V}$ be the density with respect to $\mu$ of $P_{V}$.

In conclusion, we will assume a model for the law of $O$ defined as
\begin{equation}
\mathcal{M}=\left\{ P\in \mathcal{P}(\mu): P\in\mathcal{R},\eqref{eq:cont} \text{ holds, } \alpha _{P}\in \range\left( T_{P}^{\prime
}\right), r_{P}\in \range\left( T_{P}\right) \text{ and } \varphi(P)\in\mathcal{S}
\right\},
\label{eq:model_M}
\end{equation}
where $\mathcal{R}$ is a subset of $\mathcal{P}(\mu)$ that satisfies Condition \ref{cond:R}.
Note that $\mathcal{M}$ depends on the given target
parameter $\varphi $ through the restrictions that \eqref{eq:cont} holds and the Riesz representer $\alpha _{P}$ is in $\range\left(
T_{P}^{\prime }\right)$, but we do not make this dependence explicit in the notation.

Regarding parameters of the form \eqref{eq:def_phi}, our results are derived for those that
satisfy the regularity conditions specified in Conditions \ref{cond:alpha_coheres} and \ref{cond:alpha_convergence_1} stated in the Appendix. At a high level, Condition \ref{cond:alpha_coheres} requires that, if $f_{P}$ is a special kind of simple function, that is, a function taking only a finite number of values, then \eqref{eq:cont} holds, and $\alpha_{P}$ is also a simple function. Condition \ref{cond:alpha_convergence_1} is a type of continuity assumption on the map $P\mapsto \alpha_{P}$. As
demonstrated in Proposition \ref{prop:examples} in the Appendix, these conditions hold for the parameters in the examples below, which under certain identifying assumptions coincide with estimands of interest in causal inference. 
It is important to note that while these identifying assumptions may impose additional inequality constraints on 
$P$, we do not enforce such constraints in our model 
$\mathcal{M}$, except for those that can expressed as the restriction that $P\in\mathcal{R}$ where $\mathcal{R}$ satisfies Condition \ref{cond:R}, and those that restrict the possible values of 
$\varphi(P)$ through constraints on the causal estimand it identifies.

 In what follows, if $V_{1}$ is a subvector of $O$ and $V_{2}$ is another subvector of $O$ we let $f_{P,V_{1}\mid V_{2}}$ be the conditional density of $V_{1}$ given $V_{2}$.

\begin{example}[Average treatment effect in proximal causal inference]\label{ex:ATE_proximal}
    Suppose $Y$ is an outcome of interest, $Z$ is a treatment inducing proxy, $W$ is an outcome inducing proxy and
    $X=(A,L)$, where $A$ is a binary treatment and $L$ is a vector of covariates. \cite{tchetgen2020,kallus,cui2023} show that, under the assumptions discussed in their papers, the average treatment effect $E\lbrace Y(1)-Y(0)\rbrace$, where $Y(a)$ is the potential outcome under treatment $A=a$, coincides with the parameter $\varphi(P)$, with $m(O,g)=g(W,1,L) - g(W,0,L)$ and $\alpha_{P}(W,A,L)=(2A-1)/f_{P,A\mid(W,L)}(A\mid W,L)$.
\end{example}

\begin{example}[Weighted integral of the non-parametric instrumental variable function]\label{ex:np_IV}
    Suppose $Y$ is an outcome of interest, $Z$ is an instrumental variable, $W$ is a treatment variable and $X$ does not exist.
    \cite{neweypowell} show that, under certain identifying assumptions, $\int E\left\lbrace Y(w) \right\rbrace \omega(w) d\mu_{W}(w)$, where $Y(w)$ is the potential outcome under treatment $W=w$ and $\omega$ is a known function, coincides with the parameter $\varphi(P)$, with $m(O,g)=\int g(w) \omega(w) d\mu_{W}$ and  $\alpha_{P}(W)=\omega(W)/f_{P,W}(W)$. Our results apply to the case in which $\omega$ is a simple function.
\end{example}

\begin{example}[Average treatment effects with instruments]\label{ex:ATE_IV}
    Suppose $Y$ is an outcome of interest, $Z$ is a binary instrumental variable, $W$ is a binary treatment variable and
    $X$ is a vector of covariates. \cite{wang2018bounded} show that, under certain identifying assumptions, the average treatment effect $E\lbrace Y(1)-Y(0)\rbrace$ coincides with the parameter $\varphi(P)$, with $m(O,g)=g(1,X)-g(0,X)$ and  $\alpha_{P}(W,X)=(2W-1)/f_{P,W\mid X}(W\mid X)$.
\end{example}

\begin{example}[Local average treatment effect]\label{ex:LATE}
    Suppose $Y$ is an outcome of interest, $Z$ is a binary instrumental variable, $W$ is a binary treatment variable and
    $X$ does not exist. \cite{imbens} show that, under certain identifying assumptions, the local average treatment effect $E\lbrace Y(1)-Y(0)\mid W(1)>W(0)\rbrace$, where $Y(w)$ is the potential outcome of $Y$ under  $W=w$ and $W(z)$ is the potential outcome of $W$ under $Z=z$, coincides with the parameter $\varphi(P)$, with $m(O,g)=g(1)-g(0)$ and $\alpha_{P}(W)=(2W-1)/f_{P,W}(W)$. Thus $\varphi(P)$ coincides with the target parameter identifying the average treatment effect in Example \ref{ex:ATE_IV}, in the absence of covariates. We note that the function of $P$ that identifies the local average treatment effect in the presence of covariates under the model of \cite{imbens} does not agree with the target parameter in Example \ref{ex:ATE_IV} and furthermore is not of the form \eqref{eq:def_phi}.
\end{example}

\section{Main results}\label{sec:main}

We are now ready to state the main results of the paper. 
In words, Theorem \ref{theo:main} below establishes that, when $\mathcal{S}=\mathbb{R}$, there exist laws $P^{\ast} \notin \mathcal{M}$ such that for any value $\zeta\in\mathbb{R}$, there is a law $P \in \mathcal{M}$ arbitrarily close to $P^{\ast}$ in total variation, with $\varphi(P) = \zeta$.

Theorem \ref{theo:main} below, uses the total variation distance 
between two probability laws $P_{1}$ and $P_{2}$, defined as $\Vert P_{1} - P_{2} \Vert_{TV}:=\sup\limits_{B \: \text{Borel}} \vert P_{1}(B) -P_{2}(B)\vert$.

\begin{theorem}\label{theo:main}
    Assume that Condition \ref{cond:product_measure} and Conditions \ref{cond:alpha_coheres} and \ref{cond:alpha_convergence_1} in the Appendix hold and that $\mathcal{S}=\mathbb{R}$. Then, there exists $P^{\ast}\notin\mathcal{M}$ such that for any $\zeta \in \mathbb{R}$ there exists a sequence $(P_{i})_{i\geq 1}$ of laws in $\mathcal{M}$ satisfying
    \begin{enumerate}
        \item $\Vert P^{\ast} -P_{i}\Vert_{TV} \to 0$ as $i\to \infty$.
        \item $\varphi(P_{i})=\zeta$ for all $i$.
    \end{enumerate}
\end{theorem}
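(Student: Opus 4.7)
The plan is to identify a law $P^{\ast}$ at which the integral equation \eqref{eq:integral_eq} degenerates so completely that small perturbations of $P^{\ast}$ can realize any prescribed value $\zeta$ of $\varphi$. The natural choice is any $P^{\ast}$ with $W$ conditionally independent of $Z$ given $X$ and with a strictly positive density on the support dictated by Condition \ref{cond:product_measure}. Under such $P^{\ast}$, the conditional mean operator $T_{P^{\ast}}$ depends on $g$ only through $E_{P^{\ast}}\{g(W,X)\mid X\}$, so its range consists of functions of $X$ alone and its kernel contains every function of $(W,X)$ that is mean-zero given $X$. Hence $\alpha_{P^{\ast}}$ does not lie in $\range(T^{\prime}_{P^{\ast}})$ for the Riesz representers of Examples \ref{ex:ATE_proximal}--\ref{ex:LATE}, and so $P^{\ast}\notin \mathcal{M}$. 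The key mechanism is that $T_{P^{\ast}}$ is infinitely ill-conditioned: an arbitrarily small perturbation of $r_P$ can produce an arbitrarily large change in any solution $g_P$, and thus in $\varphi(P)$.

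For each $\zeta$, I would build $P_i$ from $P^{\ast}$ by two coordinated multiplicative perturbations with amplitudes $\epsilon_i,\delta_i\downarrow 0$. First, perturb $f_{P^{\ast},W,Z\mid X}$ by a bounded factor $1+\epsilon_i\psi(W,Z,X)$ with $\psi$ centered so that the conditional marginals of $W\mid X$ and $Z\mid X$ are preserved and $\|f_{P_i}-f_{P^{\ast}}\|_{L^{1}(\mu)}=O(\epsilon_i)$; this injects an $O(\epsilon_i)$ dependence between $W$ and $Z$ given $X$ and enlarges the range of $T_{P_i}$ via new singular values of order $\epsilon_i$. Second, perturb the conditional law of $Y\mid Z,X$ by an additive $O(\delta_i)$ shift chosen so that $r_{P_i}$ lies in the newly opened part of $\range(T_{P_i})$. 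The resulting solution $g_{P_i}$ has magnitude of order $\delta_i/\epsilon_i$ in the direction along which the ill-posedness has been exploited, so $\varphi(P_i)=E_{P_i}\{\alpha_{P_i}g_{P_i}\}$ is itself of order $\delta_i/\epsilon_i$. Parameterizing the second perturbation by a real scalar $t$, the map $t\mapsto \varphi(P_{i}^{(t)})$ is continuous for each fixed $i$ and sweeps out all of $\mathbb{R}$, so the intermediate value theorem yields a $t_i$ with $\varphi(P_i^{(t_i)})=\zeta$; taking $\epsilon_i,\delta_i\to 0$ gives $\|P_i-P^{\ast}\|_{TV}\to 0$.

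To finish I would verify $P_i\in\mathcal{M}$ for large $i$. Positivity and membership in $\mathcal{R}$ follow from Condition \ref{cond:R} applied to the sequence $f_{P_i}\convas f_{P^{\ast}}$; the condition $r_{P_i}\in\range(T_{P_i})$ is arranged by construction of the second perturbation; and $\alpha_{P_i}\in\range(T^{\prime}_{P_i})$ is exactly where Conditions \ref{cond:alpha_coheres}, \ref{cond:alpha_convergence_1} and \ref{cond:alpha_convergence_2} are invoked, since they guarantee coherent behavior of $\alpha_P$ under such perturbations and make it possible to solve the adjoint equation at $P_i$. The main obstacle is reconciling two opposing demands: the perturbation must be vanishing in total variation yet must produce a solution $g_{P_i}$ that is both large and aligned with $\alpha_{P_i}$ so as to hit the prescribed $\zeta$. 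Choosing $\psi$ with the correct orientation relative to $\alpha_{P^{\ast}}$ is the delicate step. In the discrete case of Condition \ref{cond:product_measure}(2) this reduces to a linear-algebra construction in which the rank-one matrix representing $T_{P^{\ast}}$ is nudged to full rank by the perturbation, while in the mixed continuous case of Condition \ref{cond:product_measure}(1) one embeds the same construction using simple functions supported by the atomless components $\mu_{Z_{1}}$ and $\mu_{W_{1}}$.
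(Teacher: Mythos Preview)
Your proposal is correct and follows essentially the same route as the paper: choose $P^{\ast}$ with $(Y,W)\perp Z\mid X$, reduce to a rectangular simple-function approximation so the problem becomes finite-dimensional, perturb the conditional laws of $W\mid Z,X$ and of $Y\mid Z,X$ by vanishing factors whose ratio governs $\varphi$, and apply an intermediate-value argument to hit the prescribed $\zeta$. One clarification on where the regularity conditions enter: $\alpha_{P_i}\in\range(T^{\prime}_{P_i})$ is obtained not from Conditions \ref{cond:alpha_coheres}--\ref{cond:alpha_convergence_2} but simply from the invertibility of the perturbed $W\mid Z,X$ matrix; those conditions are instead used to guarantee that $\alpha_{P_i}$ is itself simple (Condition \ref{cond:alpha_coheres}), that the positivity of $\var_{P^{\ast}}\{\alpha_{P^{\ast}}(W,X)\mid X\}$ survives the simple-function approximation (Condition \ref{cond:alpha_convergence_1}), and that $\alpha_{P_i}\to\alpha_{\widetilde P_i}$ as the perturbations vanish (Condition \ref{cond:alpha_convergence_2}), which together make the limiting slope of $\varphi$ in the ratio $\eta_Y/\eta_W$ strictly nonzero---this is precisely the ``correct orientation relative to $\alpha_{P^{\ast}}$'' you flag as the delicate step.
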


Invoking Theorem 3 of \cite{impossible}, we arrive at  the following corollary of Theorem \ref{theo:main}. 
\begin{corollary}\label{coro:confidence_impossible}
     Let $C_{n}=C_{n}(O_{1},\dots,O_{n})$ be an asymptotically uniformly valid confidence set of level at least $1-\alpha$ for $\varphi(P)$ over $\mathcal{M}$.
     Assume that $diam(\mathcal{S})=+\infty$.
    It holds that
    \[ 
    \liminf_{n}\sup\limits_{\mathcal{M}} P\left( \text{diam}(C_{n})=+\infty \right) \geq 1-\alpha,
    \]
    where $\text{diam}(C_{n}):=\sup \lbrace x-y : x,y \in C_{n}\rbrace $ is the diameter of $C_{n}$.

\end{corollary}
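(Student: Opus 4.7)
The plan is to combine Theorem \ref{theo:main} with an approximation-and-Fatou argument in the spirit of the impossibility results cited in \cite{impossible}. First I invoke (an extension of) Theorem \ref{theo:main} to fix a law $P^{\ast}\notin\mathcal{M}$ lying in the total-variation closure of $\mathcal{M}$ such that, for every $\zeta$ in an unbounded subset of $\mathcal{S}$ (which exists because $\text{diam}(\mathcal{S})=+\infty$), there is a sequence $(P_{i}^{\zeta})_{i\geq 1}\subset\mathcal{M}$ with $\varphi(P_{i}^{\zeta})=\zeta$ and $\Vert P_{i}^{\zeta}-P^{\ast}\Vert_{TV}\to 0$.

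Next, fix $\varepsilon>0$ and choose $N$ so that the uniform validity assumption gives $P^n\{\varphi(P)\in C_n\}\geq 1-\alpha-\varepsilon$ for all $n\geq N$ and all $P\in\mathcal{M}$. Using the product-measure tensorization bound $\Vert P^n-Q^n\Vert_{TV}\leq n\Vert P-Q\Vert_{TV}$, at each fixed $n\geq N$ I let $i\to\infty$ along $P=P_{i}^{\zeta}$ to obtain $(P^{\ast})^n\{\zeta\in C_n\}\geq 1-\alpha-\varepsilon$ for every admissible $\zeta$.

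The crux is converting this pointwise-in-$\zeta$ bound into a statement about the diameter. I pick a sequence $\zeta_k$ in the relevant unbounded subset of $\mathcal{S}$ with $|\zeta_k|\to\infty$, and set $A_k=\{\zeta_k\in C_n\}$ at the fixed $n\geq N$. Since $(P^{\ast})^n(A_k)\geq 1-\alpha-\varepsilon$ for each $k$, the reverse Fatou inequality for sets, $(P^{\ast})^n(\limsup_k A_k)\geq \limsup_k (P^{\ast})^n(A_k)$, yields $(P^{\ast})^n(\limsup_k A_k)\geq 1-\alpha-\varepsilon$. Any sample point in $\limsup_k A_k$ corresponds to a $C_n$ that contains $\zeta_k$ for infinitely many $k$, hence an unbounded $C_n$, so $(P^{\ast})^n\{\text{diam}(C_n)=+\infty\}\geq 1-\alpha-\varepsilon$.

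Finally, I transfer the bound back inside $\mathcal{M}$. At each fixed $n\geq N$, I pick $P\in\mathcal{M}$ with $\Vert P-P^{\ast}\Vert_{TV}\leq \varepsilon/n$, using any approximating sequence supplied by Theorem \ref{theo:main}. Tensorization gives $\Vert P^n-(P^{\ast})^n\Vert_{TV}\leq \varepsilon$, so $P^n\{\text{diam}(C_n)=+\infty\}\geq 1-\alpha-2\varepsilon$, and hence $\sup_{P\in\mathcal{M}}P^n\{\text{diam}(C_n)=+\infty\}\geq 1-\alpha-2\varepsilon$ for all $n\geq N$. Taking $\liminf_n$ and then $\varepsilon\downarrow 0$ yields the claim. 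The main obstacle is the passage from the pointwise bound to the unboundedness bound: a union bound over an uncountable family of $\zeta$ would be vacuous, so the argument must reduce to a countable cofinal sequence and invoke reverse Fatou, with the tensorization step only playing the bookkeeping role of preserving TV closeness at each fixed $n$.
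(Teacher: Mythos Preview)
Your argument is correct. The only point worth flagging is the appeal to ``an extension of'' Theorem~\ref{theo:main}: since Theorem~\ref{theo:main} is stated under $\mathcal{S}=\mathbb{R}$, you are implicitly using that its proof shows the constructed laws $P_i$ satisfy all the defining constraints of $\mathcal{M}$ other than $\varphi(P_i)\in\mathcal{S}$, so that choosing $\zeta\in\mathcal{S}$ suffices. The paper makes the same implicit use in its proof of Corollary~\ref{coro:confidence}, so this is fine, but it is worth stating explicitly.

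As for the comparison with the paper: the paper does not give a direct proof of this corollary at all; it simply invokes Theorem~3 of \cite{impossible} as a black box. Your argument is therefore strictly more informative, and it is essentially a self-contained reconstruction of what that cited result encapsulates. Your route (fix $n$, push the coverage bound to $P^{\ast}$ via product-measure TV tensorization, then use reverse Fatou over a countable unbounded sequence $\zeta_k$ to convert per-$\zeta$ coverage into an infinite-diameter event, then transfer back into $\mathcal{M}$) is also close in spirit to the paper's own direct proof of Corollary~\ref{coro:confidence}, which uses the same approximation-by-$P^{\ast}$ mechanism but handles the diameter via monotonicity of the events $\{\sup C_n\geq \zeta_k\}$ rather than reverse Fatou. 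The reverse-Fatou step is arguably cleaner here because it dispenses with any monotone structure and directly yields unboundedness of $C_n$ from containment of infinitely many $\zeta_k$.
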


Corollary \ref{coro:confidence_impossible} establishes that, when the range of $\varphi(P)$ has infinite diameter, for any asymptotically uniformly valid confidence set in model $\mathcal{M}$, there exist laws in $\mathcal{M}$ at which the confidence set has, with high probability, an infinite diameter. An immediate consequence of Corollary \ref{coro:confidence_impossible} is that Wald confidence intervals for $\varphi(P)$ cannot be uniformly valid in model $\mathcal{M}$ when $diam(\mathcal{S})=+\infty$, because Wald confidence intervals have finite length with probability one at each $P\in\mathcal{M}$.

In the next corollary, we provide a refinement of Corollary \ref{coro:confidence_impossible} that applies even to models in which the range of $\varphi(P)$ is restricted.

\begin{corollary}\label{coro:confidence}
     Let $C_{n}=C_{n}(O_{1},\dots,O_{n})$ be an asymptotically uniformly valid confidence set of level at least $1-\alpha$ for $\varphi(P)$ over $\mathcal{M}$. Then
    $
    \liminf_{n}\sup_{P\in \mathcal{M}} P\left\lbrace  diam(C_{n}) \geq diam(\mathcal{S}) \right\rbrace \geq 1-2\alpha.
    $
    In particular, if $C_{n}$ is a set that satisfies that for every $\varepsilon>0$ it holds that $\lim_{n}\sup_{P\in \mathcal{M}} P\left\lbrace diam(C_{n}) \geq \varepsilon \right\rbrace = 0$, then $C_{n}$ is not a uniformly valid confidence set.
\end{corollary}

Corollary \ref{coro:confidence} establishes that for any asymptotically uniformly valid confidence set, for all sufficiently large $n$, there exist laws in $\mathcal{M}$ at which the confidence set has a diameter that is greater than the diameter of $\mathcal{S}$, with high probability.
This implies that if $C_{n}$ is a set with diameter that converges to zero in probability, uniformly over $\mathcal{M}$, then $C_{n}$ cannot be a uniformly valid confidence set.

We also have the following proposition. 
\begin{proposition}\label{coro:minimax} 
    There exists a constant $c>0$ such that for all $n$ and all estimators $\widehat{\varphi}_{n}=\varphi(O_{1},\dots,O_{n})$,
    \begin{equation}
        \sup\limits_{P\in \mathcal{M}} d_{P}\lbrace \widehat{\varphi}_{n},\varphi(P)\rbrace
          \geq c
          \nonumber
    \end{equation}
    where $d_{P}(U,V):=E_{P}\left\lbrace \min(\vert U-V\vert,1)\right\rbrace$.
\end{proposition}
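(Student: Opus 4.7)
The plan is to run a standard Le Cam two-point lower bound, with the two hypotheses supplied by Theorem~\ref{theo:main}. Take two target values $\zeta_{0}=0$ and $\zeta_{1}=1$ (it suffices that $\mathcal{S}$ contains two distinct points, and we assume $\mathcal{S}=\mathbb{R}$ so that Theorem~\ref{theo:main} applies directly). Theorem~\ref{theo:main} then furnishes a law $P^{\ast}$ together with, for each $j\in\{0,1\}$, a sequence $(P_{i}^{(j)})_{i\geq 1}\subset \mathcal{M}$ such that $\varphi(P_{i}^{(j)})=\zeta_{j}$ and $\Vert P_{i}^{(j)}-P^{\ast}\Vert_{TV}\to 0$ as $i\to\infty$. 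In particular, by the triangle inequality, for every $n\geq 1$ one can pick $i=i(n)$ with $\Vert P_{i}^{(0)}-P_{i}^{(1)}\Vert_{TV}\leq 1/(2n)$; write $P_{0}$ and $P_{1}$ for the resulting two laws in $\mathcal{M}$.

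The second step is the Le~Cam calculation. The elementary tensorization inequality for total variation (which follows from $1-\prod(1-a_k)\le \sum a_k$) yields $\Vert P_{0}^{\otimes n}-P_{1}^{\otimes n}\Vert_{TV}\leq n\Vert P_{0}-P_{1}\Vert_{TV}\leq 1/2$. For any estimator $\widehat{\varphi}_{n}$ and any realization, if $|\widehat{\varphi}_{n}|<1/2$ then $|\widehat{\varphi}_{n}-1|\geq 1/2$ and conversely, so with $A:=\{\min(|\widehat{\varphi}_{n}|,1)\geq 1/2\}$ both $1_{A}\leq 2\min(|\widehat{\varphi}_{n}-0|,1)$ and $1_{A^{c}}\leq 2\min(|\widehat{\varphi}_{n}-1|,1)$ hold pointwise. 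Integrating under $P_{0}^{\otimes n}$ and $P_{1}^{\otimes n}$ respectively and summing,
\begin{equation*}
d_{P_{0}}(\widehat{\varphi}_{n},0)+d_{P_{1}}(\widehat{\varphi}_{n},1)\geq \tfrac{1}{2}\bigl\{P_{0}^{\otimes n}(A)+P_{1}^{\otimes n}(A^{c})\bigr\}\geq \tfrac{1}{2}\bigl(1-\Vert P_{0}^{\otimes n}-P_{1}^{\otimes n}\Vert_{TV}\bigr)\geq \tfrac{1}{4}.
\end{equation*}
Taking the max of the two summands, $\sup_{P\in\mathcal{M}}d_{P}(\widehat{\varphi}_{n},\varphi(P))\geq 1/8$, uniformly in $n$ and in the choice of estimator, so $c=1/8$ works.

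I do not anticipate a substantive obstacle: Theorem~\ref{theo:main} has already absorbed the hard content of the argument by producing, arbitrarily close to a common law $P^{\ast}$, two laws in $\mathcal{M}$ whose target values differ by a prescribed amount. The remaining work is routine---the truncated loss $\min(\cdot,1)$ makes separation in the parameter translate into loss at least $1/2$ on one of the two hypotheses, and the tensorization bound $\Vert P_{0}^{\otimes n}-P_{1}^{\otimes n}\Vert_{TV}\leq n\Vert P_{0}-P_{1}\Vert_{TV}$ can be neutralized for each fixed $n$ because the total variation distance between the two hypotheses can be driven to zero at will. The only small care required is to emphasize that the two hypotheses are chosen as a function of $n$, so that the lower bound is genuinely non-asymptotic and uniform in $n$, as the proposition requires.
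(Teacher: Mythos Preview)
Your proof is correct and follows the same Le Cam two-point strategy as the paper: both arguments pick two values $\zeta_{0},\zeta_{1}$ and use the construction underlying Theorem~\ref{theo:main} to find laws in $\mathcal{M}$ with those target values that are arbitrarily close to a common law, then apply a standard two-point minimax bound. The only minor technical differences are that the paper controls $\Vert P_{0}^{\otimes n}-P_{1}^{\otimes n}\Vert_{TV}$ via Pinsker's inequality and KL tensorization (driving it all the way to zero, yielding $c=\min(|\zeta_{0}-\zeta_{1}|,1)/4$) rather than your direct TV subadditivity bound, and the paper reaches into the \emph{proof} of Theorem~\ref{theo:main} so that the argument goes through for any $\mathcal{S}$ containing two distinct points, whereas invoking the theorem's statement forces you to take $\mathcal{S}=\mathbb{R}$.
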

Since the metric $d_{P}$ metrizies convergence in probability under $P$, that is $U_n - V_{n}\to 0$ in probability under $P$ if and only $d_{P}(U_{n},V_{n})\to 0$, 
Proposition \ref{coro:minimax} implies that there do not exist uniformly consistent estimators of $\varphi(P)$ over $\mathcal{M}$.

\section{Inverting the score test}\label{sec:score}

As noted in Example \ref{ex:LATE}, the local average treatment effect in a binary instrumental variable model without covariates is identified by a functional of the form \eqref{eq:def_phi}. Specifically, under the assumptions in \cite{imbens}, the local average treatment effect is equal to the Wald estimand 
$$
\varphi(P)=\frac{E_{P}\left(Y\mid Z=1\right)-E_{P}\left(Y\mid Z=0\right)}{E_{P}\left(W\mid Z=1\right)-E_{P}\left(W\mid Z=0\right)},
$$
which corresponds to $m(O,g)=g(1)-g(0)$ and $\alpha_{P}(W)=(2W-1)/f_{P,W}(W)$. Recall from Example \ref{ex:LATE} that this estimand also identifies the average treatment effect in the model of \cite{wang2018bounded} in the absence of covariates.

Suppose we are interested in constructing a uniform confidence set for $\varphi(P)$ in a model $\mathcal{M}$ in which $\mathcal{R}$ is defined  by the restriction that $c<f_{P,Z}(Z=1)<1-c$ for some $c>0$ and $\mathcal{S}$ is an open bounded subset of $\mathbb{R}$.
Following the well established strategy in the weak instruments literature \citep{anderson1949,andrews2019weak,ma2023} we can construct such confidence set by inverting the score test. 
Specifically, a level $1-\alpha$ uniformly asymptotically valid confidence set is given by
$$
C_{n}=\lbrace \theta: \vert \mathcal{T}_{n}(\theta)\vert \leq \zeta_{1-\alpha/2}\rbrace
$$
where
\begin{align}
    \mathcal{T}_{n}(\theta)= \frac{\sqrt{n}\mathbb{P}_{n}\left\lbrace \eta^{1}_{\mathbb{P}_{n},\theta}(O)\right\rbrace}{\sigma_{\mathbb{P}_{n},\theta}},
    \nonumber
\end{align}
$\mathbb{P}_{n}$ is the empirical distribution, for any law $P$ 
\begin{align*}
 &\eta^{1}_{P,\theta}(O):=\frac{2Z-1}{f_{P,Z}(Z)}\frac{1}{cov_{P}(W,Z)}\left[ Y-E_{P}(Y\mid Z=1)- \theta \lbrace W- E_{P}(W\mid Z=1)\rbrace \right],
 \\
 &\sigma^{2}_{P,\theta}:= E_{P}\left[\left\lbrace \eta^{1}_{P,\theta}(O)\right\rbrace^{2}\right],
\end{align*}
and $\zeta_{1-\alpha/2}$ is the $1-\alpha/2$ quantile of the standard normal distribution. See \cite{ma2023}. The random variable $\eta^{1}_{P,\theta}(O)$ in an unbiased estimating function for $\varphi(P)$, in that $E_{P}\left\lbrace \eta^{1}_{P,\theta}(O)\right\rbrace=0$ if and only if $\theta=\varphi(P)$.

The confidence set $C_{n}$  is uniformly asymptotically valid because the distribution of the test statistic $\mathcal{T}_{n}\left\lbrace\varphi(P)\right\rbrace$ converges to a standard normal distribution uniformly over model $\mathcal{M}$. Note that the term $cov_{P}(W,Z)$ appears as a denominator in $\eta^{1}_{P,\theta}$, and could be arbitrarily small for some $P\in\mathcal{M}$. This could cause the random variable $\eta^{1}_{\mathbb{P}_{n},\theta}$ to behave erratically. Nevertheless, $\mathcal{T}_{n}\left\lbrace\varphi(P)\right\rbrace$ is uniformly asymptotically normal because, the term $cov_{\mathbb{P}_{n}}(W,Z)$ in  $\eta^{1}_{\mathbb{P}_{n},\theta}$ cancels out with the one apearing in $\sigma_{\mathbb{P}_{n},\theta}$, and the remaining nuisance parameters that appear in $\eta^{1}_{P,\theta}$, namely $f_{P,Z}(Z),E_{P}(Y\mid Z=1)$ and $E_{P}(W\mid Z=1)$ can be estimated uniformly at a rate $\sqrt{n}$ in model $\mathcal{M}$.

For an arbitrary parameter of the form \eqref{eq:def_phi}, an unbiased estimating function is  $\psi^{1}_{g_{P},q_{P},\theta}(O)$ where $q_{P}$ is a solution to the integral equation 
$
E_{P}\left\lbrace q(Z,X)\mid W,X\right\rbrace=\alpha_{P}(W,X)
$
and for any $g,q$ and $\theta$,
\begin{equation}
    \psi^{1}_{g,q,\theta}(O):=m(O,g)+q(Z,X)\left\lbrace Y-g(W,X)\right\rbrace - \theta.
    \label{eq:psi1_new}
    \end{equation} 
See \cite{bennett2022}. For the special case of the local average treatment effect without covariates $X$, $\psi^{1}_{g_{P},q_{P},\varphi(P)}$ coincides with $\eta^{1}_{P,\varphi(P)}$. To see this, notice that: (i)
$m(O,g_{P})$ cancels with $\theta=\varphi(P)$ because $m(O,g_{P})=g_{P}(1)-g_{P}(0)=\varphi(P)$, (ii)
    $g_{P}(W)=E_{P}(Y\mid Z=1)+ \varphi(P) \lbrace W- E_{P}(W\mid Z=1)\rbrace $ and  $q_{P}(Z)=\frac{2Z-1}{f_{P,Z}(Z)} \frac{1}{cov_{P}(W,Z)}$.

Given the success of the strategy of inverting the score test for the local average treatment effect, a natural attempt for extending it to the general case would be to consider the following set, obtained by inverting a score test based on the general form of the estimating equation \eqref{eq:psi1_new}: 
$$
D_{n}=\lbrace \theta: \vert \mathcal{W}_{n}(\theta)\vert \leq \zeta_{1-\alpha/2}\rbrace
$$
 where
\begin{align}
    \mathcal{W}_{n}(\theta):= \frac{\sqrt{n}\mathbb{P}_{n}\left\lbrace \psi^{1}_{\widehat{g},\widehat{q},\theta}(O)\right\rbrace}{\widehat{\sigma}_{\widehat{g},\widehat{q},\theta}},
    \nonumber
\end{align}
$\widehat{g},\widehat{q}$ are estimators of $g_{p}$ and $q_{P}$, possibly computed in an independent sample, 
and $\widehat{\sigma}_{\widehat{g},\widehat{q},\varphi(P)}$ converges in probability, uniformly over $\mathcal{M}$, to 
$$
\sigma_{P}:=E^{1/2}_{P}\left[ \left\lbrace \psi^{1}_{g_{P},q_{P},\varphi(P)}(O)\right\rbrace^{2}\right].
$$
Note that this strategy does not leverage any  possible special simplifications in the estimating equation, such as 1) and 2) above for the local average treatment effect.

Unfortunately, $D_{n}$ is not uniformly asymptotically valid in model $\mathcal{M}$, even for the local average treatment effect. This happens because while for some $P$ in $\mathcal{M}$, and for suitably chosen $\widehat{g}$ and $\widehat{q}$, it holds that 
$\mathcal{W}_{n}\lbrace \varphi(P)\rbrace$ converges in distribution to a standard normal, the following proposition establishes that this convergence is not uniform over $\mathcal{M}$.
The proposition is stated in terms of the Levy-Prokhorov distance, $d_{LP}$, defined for any two probability laws $P_{1}$ and $P_{2}$ as
$$
d_{LP}(P_{1},P_{2}):= \inf\lbrace \varepsilon>0: P_{1}(B)\leq P_{2}(B^{\varepsilon}) + \varepsilon \text{ for all } B \text{ Borel}\rbrace,
$$ 
where if $B$ is a Borel set in the real line,
$$
   B^{\varepsilon}:=\lbrace t\in\mathbb{R}: \text{there exists }s\in B \text{ such that }\vert s-t\vert<\varepsilon \rbrace.
$$
The distance $d_{LP}$ metrizies converges in distribution \citep{billingsley2013}, that is, a sequence of laws $P_{n}$ converges to a law $P$ in distribution if and only if $d_{LP}(P_{n},P)\to 0$ as $n\to\infty$.

\begin{proposition}\label{coro:no_score_test}
    Let $O_{1},\dots,O_{n}$ be independent identically distributed random variables with law $P\in\mathcal{M}$.
    Let $(\widehat{g}_{n})_{n\geq 1}$ and  $(\widehat{q}_{n})_{n\geq 1}$ be sequences of functions such that $\widehat{g}_{n}$ is a function of $(W,X)$ and  $\widehat{q}_{n}$ is a function of $(Z,X)$ for each $n$. Moreover, assume that, for all $n$, $\widehat{g}_{n}(w,x)$ and $\widehat{q}_{n}(z,x)$ are statistics for every $w,z$ and $x$ in the ranges of $W,Z$ and $X$ respectively, that is, $\widehat{g}_{n}(w,x)$ and $\widehat{q}_{n}(z,x)$ depend on $O_{1},\dots,O_{n}$  but not on $P$.
   Assume that $\widehat{\sigma}_{\widehat{g},\widehat{q},\varphi(P)}$ converges in probability, uniformly over $\mathcal{M}$, to $\sigma_{P}$ and that $\sup_{P\in\mathcal{M}} \sigma_{P}<\infty$. 
   Let $P_{{\mathcal{W}}_{n}}$ be the distribution of ${\mathcal{W}}_{n}\lbrace \varphi(P) \rbrace$ when $O$ has distribution $P$ and let $\Phi$ be the distribution of the standard normal. Then,
   $
   \limsup_{n\to\infty} \sup_{P\in\mathcal{M}}d_{LP}(P_{{\mathcal{W}}_{n}},\Phi)>0.
   $
\end{proposition}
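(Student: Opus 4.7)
The plan is to argue by contradiction, reducing the claim to the impossibility of uniformly consistent estimation provided by Proposition \ref{coro:minimax}. I would first observe that, because $\theta$ enters $\psi^{1}_{\widehat g,\widehat q,\theta}(O)$ only as an additive constant, the numerator of $\mathcal{W}_{n}$ decouples as
\[
\sqrt{n}\,\mathbb{P}_{n}\bigl\{\psi^{1}_{\widehat g,\widehat q,\theta}(O)\bigr\}=\sqrt{n}\bigl(\bar{U}_{n}-\theta\bigr),\qquad \bar{U}_{n}:=\mathbb{P}_{n}\bigl\{m(O,\widehat g_{n})+\widehat q_{n}(Z,X)\bigl[Y-\widehat g_{n}(W,X)\bigr]\bigr\}.
\]
The key point of this rewriting is that, because $\widehat g_{n}(w,x)$ and $\widehat q_{n}(z,x)$ are assumed to be statistics for every $(z,w,x)$, $\bar{U}_{n}$ is itself a bona fide statistic, hence a legitimate candidate estimator of $\varphi(P)$ to which Proposition \ref{coro:minimax} applies.

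Next, I would suppose for contradiction that $\limsup_{n}\sup_{P\in\mathcal{M}}d_{LP}(P_{\mathcal{W}_{n}},\Phi)=0$, and exploit continuity of $\Phi$ to convert this uniform convergence in distribution into uniform tightness of $\mathcal{W}_{n}\{\varphi(P)\}$: for every $\varepsilon>0$ there exist $M<\infty$ and $N$ with $\sup_{P\in\mathcal{M}}P\{|\mathcal{W}_{n}\{\varphi(P)\}|>M\}<\varepsilon$ for $n\ge N$. Combined with the hypothesis that $\widehat\sigma_{\widehat g,\widehat q,\varphi(P)}\to\sigma_{P}$ in probability uniformly over $\mathcal{M}$ and the bound $C:=\sup_{P\in\mathcal{M}}\sigma_{P}<\infty$, a short truncation argument splitting on $\{\widehat\sigma_{\widehat g,\widehat q,\varphi(P)}\le C+1\}$ and using $\sqrt{n}|\bar{U}_{n}-\varphi(P)|=|\mathcal{W}_{n}\{\varphi(P)\}|\,\widehat\sigma_{\widehat g,\widehat q,\varphi(P)}$ yields a constant $K=K(\varepsilon)$ such that $\sup_{P\in\mathcal{M}}P\{\sqrt{n}|\bar{U}_{n}-\varphi(P)|>K\}<2\varepsilon$ for $n$ sufficiently large. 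Thus $\sqrt{n}(\bar{U}_{n}-\varphi(P))$ is uniformly tight over $\mathcal{M}$.

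From this uniform tightness it is immediate, via the crude bound $d_{P}(\bar{U}_{n},\varphi(P))\le\eta+P\{|\bar{U}_{n}-\varphi(P)|>\eta\}$ with a vanishing choice of $\eta$, that $\sup_{P\in\mathcal{M}}d_{P}(\bar{U}_{n},\varphi(P))\to 0$, i.e.\ $\bar{U}_{n}$ is uniformly consistent for $\varphi(P)$ over $\mathcal{M}$. This directly contradicts Proposition \ref{coro:minimax} and closes the argument. The conceptual crux is really the first step: recognizing $\bar{U}_{n}$ as a stand-alone estimator of $\varphi(P)$ that is entirely free of the unknown $\varphi(P)$ itself. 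The remaining steps are a standard translation of uniform convergence in distribution of a ratio into uniform consistency of its numerator, and I do not anticipate any serious technical obstacle beyond carefully formalising the uniform tightness of the denominator $\widehat\sigma_{\widehat g,\widehat q,\varphi(P)}$, which is an immediate consequence of its uniform convergence in probability to the uniformly bounded $\sigma_{P}$.
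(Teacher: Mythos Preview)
Your proposal is correct and follows essentially the same route as the paper: argue by contradiction, pass from uniform Levy--Prokhorov convergence to uniform tightness of $\mathcal{W}_{n}\{\varphi(P)\}$, combine with the uniform boundedness of $\widehat\sigma$ to obtain uniform tightness of $\sqrt{n}\bigl(\bar U_{n}-\varphi(P)\bigr)$, and conclude that $\bar U_{n}$ is a uniformly consistent estimator of $\varphi(P)$, contradicting Proposition~\ref{coro:minimax}. The paper packages the tightness step into a separate lemma (Lemma~\ref{lemma:levy}) rather than arguing it inline as you do, and it does not explicitly isolate $\bar U_{n}$ in the notation, but the substance is identical.
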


Having established that $D_{n}$ is not a uniformly asymptotically valid confidence set, 
we have attempted to find alternative representations of the estimating equation \eqref{eq:psi1_new} that could lead to a score test which yields a uniformly valid confidence set for estimands different from the local average treatment effect, in the special case in which all variables are binary. We have not been able to find such a representation.

In concluding this section, we highlight that although the results derived here and in the preceding sections assume a model $\mathcal{M}$ of the form \eqref{eq:model_M}, inspecting the proofs of our results shows that these remain valid even when $\mathcal{M}$ is redefined to additionally include the additional restriction $\alpha_{P} \in \range(T_{P}^{\prime}T_{P})$, made often in the literature \citep{bennett2022,bennett2023}.

In the next section, we discuss an alternative general strategy for constructing uniformly valid confidence sets for an arbitrary parameter of the form \eqref{eq:def_phi} when all variables, except for possibly $Y$, are binary.

\section{A uniformly valid confidence set for the special case of binary variables}\label{sec:binary}

Throughout this section, we will assume that $Y$ is a bounded random variable and  all remaining variables are binary. In this case, it can be shown that
\begin{align}
    &g_{P}(W,X)= \frac{g^{nu,X}_{P}}{g^{de,X}_{P}} \left\lbrace W - E_{P}(W\mid Z=1,X)\right\rbrace  + E_{P}(Y\mid Z=1,X),
    \label{eq:formula_g_binary}
\end{align}
where 
\begin{align*}
    &g_{P}^{nu,x}:= E_{P}(Y\mid Z=1, X=x)-E_{P}(Y\mid Z=0, X=x),
    \\
    &g_{P}^{de,x}:= E_{P}(W\mid Z=1, X=x)-E_{P}(W\mid Z=0, X=x).
\end{align*}
We illustrate a strategy for constructing uniformly valid confidence sets for $\varphi(P)$ when $m(O,g)=g(W,1)$. In such case, the estimand $\varphi(P)$, which coincides with the counterfactual mean under treatment $X=1$ without baseline covariates in the proximal causal inference framework, can be expressed as
$$
\varphi(P)=\frac{g^{nu,1}_{P}}{g^{de,1}_{P}} \left\lbrace E_{P}(W) - E_{P}(W\mid Z=1,X=1)\right\rbrace  + E_{P}(Y\mid Z=1,X=1).
$$
We thus see that $\varphi(P)$ can be expressed in terms of sums, products and ratios of conditional and unconditional expectations. 
The strategy for constructing a uniformly valid confidence set for $\varphi(P)$ now goes as follows.
\begin{enumerate}
    \item Construct uniformly asymptotically valid confidence set for $g_{P}^{de,0}$, $g_{P}^{de,1}$ and all the other expectations or difference of expectations that appear in the expression for $\varphi(P)$. All of these confidence sets can be, for example, Wald confidence intervals.
    \item The confidence set for $\varphi(P)$ is obtained by piecing together the individual confidence sets we obtained in the first step, by summing, multiplying and dividing them, replicating the operations that appear in the expression for $\varphi(P)$.
\end{enumerate}
Specifically, let
\begin{align*}
    &\gamma^{W}_{P}:= g^{nu,1}_{P} \left\lbrace E_{P}(W) - E_{P}(W\mid Z=1,X=1)\right\rbrace 
    \\
    &\gamma^{Y}_{P}:= E_{P}(Y\mid Z=1,X=1),
\end{align*}
and let $B_{n}^{de},B_{n}^{W},B_{n}^{Y}$ be asymptotically uniformly valid confidence sets over $\mathcal{M}$ of level $1-\alpha/3$ for $g^{de,1}_{P},\gamma^{W}_{P}$ and $\gamma^{Y}_{P}$ respectively. For instance, $B_{n}^{de},B_{n}^{W},B_{n}^{Y}$ can be Wald confidence intervals. The proposed confidence set for $\varphi(P)$ is 
$$
B_{n}:=\lbrace s/t + r : s\in B_{n}^{W}, t\in B_{n}^{de} \setminus \lbrace 0 \rbrace, r\in B_{n}^{Y} \rbrace \cap \mathcal{S},
$$
where, recall, $\mathcal{S}$ is the set of possible values of $\varphi(P)$ over the model. Note that if $B_{n}^{de}$ contains an interval centered at zero and $B_{n}^{W}$ contains at least one non-zero value, then $B_{n}=\mathcal{S}$. The set $B_{n}$ is a uniformly asymptotically valid confidence set of level at least $1-\alpha$. This follows because, by the union bound, we have that
$$
\liminf_{n} \inf_{P\in\mathcal{M}} P\left\lbrace g^{de,1}_{P}\in B_{n}^{de},  \gamma^{W}_{P}\in B_{n}^{W},\gamma^{Y}_{P}\in B_{n}^{y} \right\rbrace \geq 1-\alpha.
$$
Now, for all $P\in\mathcal{M}$
\begin{align*}
P\left\lbrace \varphi(P) \in  B_{n}\right\rbrace &\geq P\left\lbrace g^{de,1}_{P}\in B_{n}^{de},  \gamma^{W}_{P}\in B_{n}^{W},\gamma^{Y}_{P}\in B_{n}^{Y} \right\rbrace.
\end{align*}
Consequently
\begin{align*}
    \liminf_{n} \inf_{P\in\mathcal{M}}P\left\lbrace \varphi(P) \in  B_{n}\right\rbrace \geq \liminf_{n} \inf_{P\in\mathcal{M}}P\left\lbrace g^{de,1}_{P}\in B_{n}^{de},  \gamma^{W}_{P}\in B_{n}^{W},\gamma^{Y}_{P}\in B_{n}^{Y} \right\rbrace \geq 1-\alpha.
\end{align*}

This strategy has the following shortcomings:
\begin{enumerate}
    \item Since the union bound is generally conservative, the confidence set is expected to be overly so. Moreover, the optimal way to allocate the $\alpha$ level among the different confidence sets is unclear.
    \item There are multiple ways to group the terms in the expression for $\varphi(P)$, and the best choice is uncertain. For example, one could construct separate confidence sets for $g^{nu,1}_{P}$ and $$E_{P}(W) - E_{P}(W\mid Z=1,A=1)$$ and then multiply them.
    \item The estimand has alternative expressions that depend on $q_{P}$, but it is unclear which yields better confidence sets.
\end{enumerate}

\section{Discussion}

A key takeaway of this paper is that Wald confidence intervals are not asymptotically uniformly valid over models that permit arbitrarily weak dependence between $W$ and $Z$ given $X$ when the parameter's range is infinite, with the practical consequence that they should be avoided when this dependence is suspected to be weak. Moreover, naively extending the score test inversion strategy, which works for the local average treatment effect, does not necessarily yield a uniformly valid confidence set for general estimands of the form \eqref{eq:def_phi}.

We provided a strategy for constructing uniformly valid confidence sets for $\varphi(P)$ in the special case where all variables are binary. This strategy, illustrated for $m(O, g) = g(W,1)$, extends to general estimands satisfying \eqref{eq:def_phi}, for variables $Z,W,X$ taking values on finite sets. However,the drawbacks discussed in the preceding section remain. Moreover, we were unable to come up with a strategy for constructing uniformly valid confidence sets in the general case, when none of the variables are required to be discrete.  
We hope our results encourage further research into the construction of uniformly valid confidence set for the general estimands considered here.
\newpage
\section{Appendix}

\subsection{Regularity conditions}

In this section of the Appendix, we state the regularity conditions assumed on $\varphi(P)$. To do so, we first introduce the following notation and definitions.

Definition \ref{def:simple} is a strenghtening of the well known measure-theoretic concept of a positive simple function, requiring that the support sets are Cartesian products of Borel sets in each coordinate.

\begin{definition}\label{def:simple}
    We will say that a measurable function $\pi:\mathbb{R} \times \mathbb{R}^{d_{z}} \times \mathbb{R}^{d_{w}}\times \mathbb{R}^{d_{x}}\to \mathbb{R}$ is a rectangular non-negative simple function if it is equal $\mu$-almost everywhere to a function of the form
    \begin{align}
        (y,z,w,x) \mapsto \sum\limits_{h=1}^{k_{Y}}\sum\limits_{l=1}^{k_{Z}}
        \sum\limits_{j=1}^{k_{W}}\sum\limits_{j=1}^{k_{X}} \pi_{h,l,j,m} I\lbrace (y,z,w,x)\in (S^{Y}_{h}\times S^{Z}_{l} \times   S^{W}_{j} \times S^{X}_{m}) \rbrace,
        \label{eq:simple}
    \end{align}
    where each factor of $S^{Y}_{h}\times S^{Z}_{l} \times   S^{W}_{j} \times S^{X}_{m}$ is a Borel set 
    with finite and positive $\mu$ measure, 
    $(S^{Y}_{h}\times S^{Z}_{l} \times   S^{W}_{j} \times S^{X}_{m}) \cap (S^{Y}_{h^{\prime}}\times S^{Z}_{l^{\prime}} \times   S^{W}_{j^{\prime}} \times S^{X}_{m^{\prime}})= \emptyset$ if  $(h,l,j,m)\neq (h^{\prime},l^{\prime},j^{\prime},m^{\prime})$,
    and $\pi_{h,l,j,m}>0$. The sets $S^{Y}_{h}, S^{Z}_{l},   S^{W}_{j}, S^{X}_{m}$ are called the support sets of the function $\pi$. 
    If any of the variables $Y,Z,W$ or $X$ has a finite range, we assume that its  support sets are singletons, each containing an element from the variable's range, and their union forms the variable's support.
\end{definition}

The first regularity condition on $\varphi(P)$ essentially requires that
if $P$ has a density that is a rectangular non-negative simple function then \eqref{eq:cont} holds and the Riesz representer $\alpha_{P}$ corresponding to the linear functional defined in \eqref{eq:cont}
is a simple function with the same support sets as the density of $P$.
Condition \ref{cond:alpha_coheres} below formalizes this requirement.

\begin{condition}\label{cond:alpha_coheres}

    If $P\in \mathcal{P}\left( \mu \right) $ has density $%
f_{P}$ that is $\mu$-almost everywhere equal to a
rectangular non-negative simple function then the map
\[
E_{P}\left\lbrace m\left( O;\cdot\right) \right\rbrace : L^{2}\left\lbrace P_{(W,X)}\right\rbrace 
\to \mathbb{R}
\]%
is continuous and linear. Furthermore, its Riesz representer $\alpha_{P}$ coincides $P$-almost surely with a function of the form
    \begin{equation}
        (w,x)\mapsto \sum\limits_{j=1}^{k_{W}}\sum\limits_{m=1}^{k_{X}} \alpha_{j,m}  I\lbrace (w,x)\in (S^{W}_{j}\times S^{X}_{m})\rbrace  ,
        \nonumber
    \end{equation}
    for some constants $\alpha_{j,m}$.
\end{condition}

The second regularity condition on $\varphi(P)$ we impose require that the Riesz representer map $P\mapsto \alpha_{P}$ satisfies the following continuity condition.

\begin{condition}\label{cond:alpha_convergence_1}
    Let $P$ be a law satisfying \eqref{eq:cont}. Assume that $(P_{i})_{i\geq 1}$ is a sequence of laws that satisfy \eqref{eq:cont} and such that $f_{P_{i}}\to f_{P}$ as $i\to\infty$, $\mu$-almost everywhere. Then there exists a subsequence $(P_{i_{q}})_{q\geq 1}$ such that $\alpha_{P_{i_{q}}}(W,X)\to \alpha_{P}(W,X)$ almost surely under $P$.
    \end{condition}

The laws $P^{\ast}$ that appear in Theorem \ref{theo:main} satisfy the following condition.
\begin{condition}\label{cond:Past}
    $P^{\ast}\in\mathcal{R}$, $f_{P^{\ast},X}>0$ $\mu$-almost everywhere, and under $P^{\ast}$ it holds that (i) $(Y,W)$ is independent of $Z$ given $X$, (ii) $Y$ is independent of $W$ given $X$, (iii) Equation \eqref{eq:cont} is valid and (iv) $var_{P^{\ast}}\left\lbrace \alpha_{P^{\ast}}(W,X) \mid X \right\rbrace >0$ with positive probability.
\end{condition}

In the following section, we prove Theorem \ref{theo:main}. At a high level, the idea of the proof is to approximate $P^{\ast}$, a law that satisfies Condition \ref{cond:Past}, by a law $\widetilde{P}$ that has a density that is a rectangular non-negative simple function of the form \eqref{eq:simple}, where $k_{Z}=k_{W}$. This essentially reduces the problem to the discrete case where $Z$ and $W$ have the same number of levels. We will then approximate $\widetilde{P}$ by another law $P$. This latter law will be built by adding perturbations to the matrices which encode the conditional laws of $W\mid Z,X$ and $Y\mid Z,X$ under $\widetilde{P}$, so that the matrix that encodes the law of $W\mid Z,X$ under $P$ is invertible. The last step consists on obtaining a closed form expression for the estimand under $P$, and determining how to choose the perturbation factors so that the estimand takes the value $\zeta$, for any given $\zeta\in\mathbb{R}$. We note that, under the approximating law $P$, it holds that $Y$ is independent of $W$ given $X$. We believe that it should be possible to construct the approximating law $P$ without this additional conditional independence, but at the cost of substantially complicating the proof.

Auxiliary results that are needed in the proof are stated and proven in Section \ref{sec:aux}.
We will use the following notation throughout the proof.
We denote the norm in the space $L^{p}(\nu)$ by $\Vert \cdot \Vert_{L^{p}(\nu)}$. We denote convergence in  $\Vert \cdot \Vert_{L^{p}(\nu)}$ norm by $\overset{L^{p}(\nu)}{\to}$, convergence in $\nu$ measure by $\overset{\nu}{\to}$ and uniform convergence by $\overset{u}{\to}$. Given a function $f:\mathbb{R} \times \mathbb{R}^{d_{z}} \times \mathbb{R}^{d_{w}}\times \mathbb{R}^{d_{x}}\to \mathbb{R}$ we let 
$\Vert  f \Vert_{\infty}=\sup\limits_{(y,z,w,x) \in \mathbb{R} \times \mathbb{R}^{d_{z}} \times \mathbb{R}^{d_{w}} \times \mathbb{R}^{d_{x}} } \vert f(y,z,w,x)  \vert$. If $v\in\mathbb{R}^{k}$, we let $\Vert v \Vert$ be its Euclidean norm. 
We will repeatedly use that fact that if $P_{1}, P_{2} \in \mathcal{P}(\mu)$ then $\Vert P_{1} - P_{2} \Vert_{TV} \leq \Vert f_{P_{1}}- f_{P_{2}}\Vert_{L^{1}(\mu)}$.

\subsection{Proof of Theorem \ref{theo:main}}

Take $P^{\ast}$ that satisfies Condition \ref{cond:Past} and fix $\zeta\in \mathbb{R}$.

\subsubsection*{Approximating $P^{\ast}$ by a law with a density that is a rectangular non-negative simple function}

By Lemma \ref{lemma:approximating_density} for each $i\in\mathbb{N}$, there exist
\begin{enumerate}
\item four collections of pairwise disjoint Borel sets 
\begin{align*}
    &  \lbrace S^{Y,i}_{h}: h\in \lbrace 1,\dots,k^{i}_{Y}  \rbrace\rbrace \lbrace S^{Z,i}_{l}: l\in \lbrace 1,\dots,k^{i}  \rbrace\rbrace, \lbrace S^{W,i}_{j}: j\in \lbrace 1,\dots, k^{i} \rbrace \rbrace, \lbrace S^{X,i}_{m}: m\in \lbrace 1,\dots, k^{i}_{X} \rbrace \rbrace
\end{align*} 
such that $0<\mu_{Y}(S^{Y,i}_{h})<\infty$ for all $h\in \lbrace 1,\dots, k^{i}_{Y} \rbrace$, $0<\mu_{Z}(S^{Z,i}_{l})<\infty$ for all $l\in \lbrace 1,\dots, k^{i} \rbrace$,
$0<\mu_{W}(S^{W,i}_{j})<\infty$ for all $j\in \lbrace 1,\dots, k^{i} \rbrace$,
$0<\mu_{X}(S^{X,i}_{m})<\infty$ for all $m\in \lbrace 1,\dots, k^{i}_{X} \rbrace$,$k^{i}\geq 2$, $k_{Y}^{i}\geq 2$
and
\item a set of positive numbers 
\begin{align*}
    % &\lbrace \pi^{i}_{Y,h}: h\in \lbrace 1,\dots,k^{i}_{Y}  \rbrace\rbrace,
    \lbrace \pi^{i}_{(Y,W,Z,X),h,j,l,m}: h\in \lbrace 1,\dots,k^{i}_{Y}  \rbrace, j\in \lbrace 1,\dots,k^{i}\rbrace, m\in \lbrace 1,\dots,k^{i}_{X}  \rbrace, l\in \lbrace 1,\dots, k^{i} \rbrace \rbrace,
\end{align*} 
\end{enumerate}
such that the function
\begin{align*}
    % &\widetilde{f}^{i}_{Y}(y)=\sum\limits_{h} \pi^{i}_{Y,h} I\lbrace y\in S^{Y,i}_{h}\rbrace, 
    % \\
    &\widetilde{f}^{i}(y,w,z,x)=\sum\limits_{h,j,l,m} \pi^{i}_{(Y,W,Z,X),h,j,l,m} I\lbrace y\in S^{Y,i}_{h}\rbrace I\lbrace w\in S^{W,i}_{j}\rbrace I\lbrace z\in S^{Z,i}_{l}\rbrace I\lbrace x\in S^{X,i}_{m}\rbrace,
\end{align*}
satisfies that it is non-negative, integrates to 1 under $\mu$, and 
\begin{align*}
    % &\widetilde{f}^{i}_{Y} \overset{\text{a.e. }\mu}{\to} f_{P^\ast, Y},
    % \\
    &\widetilde{f}^{i} \overset{\text{a.e. }\mu}{\to} f_{P^\ast},
\end{align*} 
which by Scheffe's lemma implies 
\begin{align}
    &\widetilde{f}^{i} \convlp f_{P^\ast}.
    \label{eq:L1conv_tostar_pre}
\end{align} 
Moreover, we can choose the sets $S^{Y,i}_{h}$ such that the vector with coordinates$\int_{S^{Y,i}_{h}} y \: d\mu_{Y}$, $h=1,\dots,k^{i}_{Y}$ is not collinear with the vector of coordinates $\mu_{Y}(S^{Y,i}_{h})$, $h=1,\dots,k^{i}_{Y}$.

Next, we derive expressions for the marginals of $\widetilde{f}^{i}$, which will be useful later on in the proof. Note that
\begin{align*}
    \widetilde{f}^{i}_{(W, Z,X)}(w,z,x)
    &= \int \widetilde{f}^{i}(y,w,z,x) d\mu_{Y} 
    \\
    &= \sum_{j,l,m} \sum_{h} \pi^{i}_{(Y,W,Z,X),h,j,l,m} \mu_{Y}(S^{Y,i}_{h})   I\lbrace w\in S^{W,i}_{j}\rbrace I\lbrace z\in S^{Z,i}_{l}\rbrace  I\lbrace x\in S^{X,i}_{m}\rbrace,
\end{align*}
and
\begin{align*}
    \widetilde{f}^{i}_{(Y, Z,X)}(y,z,x)
    &= \int \widetilde{f}^{i}(y,w,z,x) d\mu_{W} 
    \\
    &= \sum_{h,l,m} \sum_{j} \pi^{i}_{(Y,W,Z,X),h,j,l,m} \mu_{W}(S^{W,i}_{j})   I\lbrace Y\in S^{Y,i}_{h}\rbrace I\lbrace z\in S^{Z,i}_{l}\rbrace  I\lbrace x\in S^{X,i}_{m}\rbrace.
\end{align*}

Let 
\begin{align*}
    &\pi^{i}_{(W,Z,X),j,l,m} = \sum_{h}\pi^{i}_{(Y,W,Z,X),h,j,l,m} \mu_{Y}(S^{Y,i}_{h}),
    \\
    &\pi^{i}_{(Y,Z,X),h,l,m} = \sum_{j}\pi^{i}_{(Y,W,Z,X),h,j,l,m} \mu_{W}(S^{W,i}_{j}).
    % \\
    % &\pi^{i}_{(Y,W,X),h,l,m} = \sum_{l}\pi^{i}_{(Y,W,Z,X),h,j,l,m} \mu_{Z}(S^{Z,i}_{l})
\end{align*}
Then
\begin{align*}
    \widetilde{f}^{i}_{(W, X)}(w, x)
    &= \int \widetilde{f}^{i}_{(W,Z,X)}(w,z,x) d\mu_{Z} 
    \\
    &= \sum_{j,m} \sum_{l} \pi^{i}_{(W,Z,X),j,l,m} \mu_{Z}(S^{Z,i}_{l})   I\lbrace w\in S^{W,i}_{j}\rbrace I\lbrace x\in S^{X,i}_{m}\rbrace
\end{align*}
and
\begin{align*}
    \widetilde{f}^{i}_{X}(x)
    = \int \widetilde{f}^{i}_{(W,X)}(w,x) d\mu_{W} = \sum_{m} \sum_{j,l} \pi^{i}_{(W,Z,X),j,l,m} \mu_{Z}(S^{Z,i}_{l}) \mu_{W}(S^{W,i}_{j})  I\lbrace x\in S^{X,i}_{m}\rbrace.
\end{align*}
Hence, when $x\in S^{X,i}_{m}$,
\begin{align*}
    \widetilde{f}^{i}_{W\mid X}(w\mid x)=\frac{\widetilde{f}^{i}_{(W, X)}(w, x)}{\widetilde{f}^{i}_{X}(x)} =  \sum_{j} \frac{\sum_{l} \pi^{i}_{(W,Z,X),j,l,m} \mu_{Z}(S^{Z,i}_{l})}{\sum_{j,l} \pi^{i}_{(W,Z,X),j,l,m} \mu_{Z}(S^{Z,i}_{l}) \mu_{W}(S^{W,i}_{j})}  I\lbrace w\in S^{W,i}_{j}\rbrace. 
\end{align*}
Similarly, when $x\in S^{X,i}_{m}$,
\begin{align*}
    \widetilde{f}^{i}_{Y\mid X}(y\mid x)=\frac{\widetilde{f}^{i}_{(Y, X)}(y, x)}{\widetilde{f}^{i}_{X}(x)} =  \sum_{h} \frac{\sum_{l} \pi^{i}_{(Y,Z,X),h,l,m} \mu_{Z}(S^{Z,i}_{l})}{\sum_{j,l} \pi^{i}_{(W,Z,X),j,l,m} \mu_{Z}(S^{Z,i}_{l}) \mu_{W}(S^{W,i}_{j})}  I\lbrace y\in S^{Y,i}_{h}\rbrace. 
\end{align*}
Moreover, 
\begin{align*}
    \widetilde{f}^{i}_{(Z, X)}(z, x)
    &= \int \widetilde{f}^{i}_{(W,Z,X)}(w,z,x) d\mu_{W} 
    \\
    &= \sum_{l,m} \sum_{j} \pi^{i}_{(W,Z,X),j,l,m} \mu_{W}(S^{W,i}_{j})   I\lbrace z\in S^{Z,i}_{l}\rbrace I\lbrace x\in S^{X,i}_{m}\rbrace
\end{align*}
Now, \eqref{eq:L1conv_tostar_pre} implies that 
\begin{align*}
    &\widetilde{f}^{i}_{(Y,X)} \convlp f_{P^\ast,(Y,X)},
    \\
    &\widetilde{f}^{i}_{(W,X)} \convlp f_{P^\ast,(W,X)},
    \\
    &\widetilde{f}^{i}_{X} \convlp f_{P^\ast,X},
    \\
    &\widetilde{f}^{i}_{(Z,X)} \convlp f_{P^\ast,(Z,X)}.
\end{align*}

Recall that convergence in $L^{1}(\mu)$ implies convergence in measure under $\mu$, and that every sequence converging in measure has a subsequence converging almost everywhere.
Thus, passing to a subsequence if needed, we have that 
\begin{align*}
    &\widetilde{f}^{i}_{(Y,X)} \convas f_{P^\ast,(Y,X)},
    \\
    &\widetilde{f}^{i}_{(W,X)} \convas f_{P^\ast,(W,X)},
    \\
    &\widetilde{f}^{i}_{X} \convas f_{P^\ast,X},
    \\
    &\widetilde{f}^{i}_{(Z,X)} \convas f_{P^\ast,(Z,X)}.
\end{align*}
The last display, together with the assumption that $f_{P^\ast,X}$ is non-zero $\mu$-almost everywhere (see Condition \ref{cond:Past}), implies that
$$
\widetilde{f}^{i}_{W\mid X} = \frac{\widetilde{f}^{i}_{(W,X)}}{\widetilde{f}^{i}_{X}} \convas f_{P^\ast,W\mid X}
$$
and
$$
\widetilde{f}^{i}_{Y\mid X} = \frac{\widetilde{f}^{i}_{(Y,X)}}{\widetilde{f}^{i}_{X}} \convas f_{P^\ast,Y\mid X}.
$$
Since by assumption under $P^{\ast}$ it holds that $Y$ is independent of $(W,Z)$ given $X$, and $W$ is independent of $Z$ given $X$, we conclude that 
\begin{align}
    &\widetilde{f}^{i}_{Y\mid X} \widetilde{f}^{i}_{W\mid X} \widetilde{f}^{i}_{(Z,X)}\overset{\text{a.e. }\mu}{\to} f_{P^\ast},
    \label{eq:unifconv_tostar}
\end{align} 
which by Scheffe's lemma implies 
\begin{align}
    &\widetilde{f}^{i}_{Y\mid X} \widetilde{f}^{i}_{W\mid X} \widetilde{f}^{i}_{(Z,X)} \convlp f_{P^\ast}.
    \label{eq:L1conv_tostar}
\end{align} 
Let $\widetilde{P}_{i}$ be the law that has $\widetilde{f}^{i}_{Y\mid X} \widetilde{f}^{i}_{W\mid X} \widetilde{f}^{i}_{(Z,X)} $ as a density. Since $P^{\ast}\in\mathcal{R}$ and $\mathcal{R}$ satisfies Condition \ref{cond:R}, eventually passing to a subsequence if needed, we can assume that $\widetilde{P}_{i}\in\mathcal{R}$.

For future use, we define
\begin{align*}
    &\pi^{i}_{(W,X),j,m} = \sum_{l} \pi^{i}_{(W,Z,X),j,l,m} \mu_{Z}(S^{Z,i}_{l})
    \\
    &\pi^{i}_{X,m} = \sum_{j,l} \pi^{i}_{(W,Z,X),j,l,m} \mu_{Z}(S^{Z,i}_{l})\mu_{W}(S^{W,i}_{j})
    \\
    &\pi^{i}_{W\mid X,j,m} =\frac{\pi^{i}_{(W,X),j,m}}{\pi^{i}_{X,m}},
    \\
    &\pi^{i}_{Y\mid X,h,m} =\frac{\pi^{i}_{(Y,X),h,m}}{\pi^{i}_{X,m}},
    \\
    &\pi^{i}_{(Z,X),l,m} = \sum_{j} \pi^{i}_{(W,Z,X),j,l,m} \mu_{W}(S^{W,i}_{j}).
\end{align*}

% Since by assumption Condition \ref{cond:alpha_convergence_1} holds, equation \eqref{eq:unifconv_tostar} implies that, eventually passing to a subsequence if needed, we can assume that $\alpha_{\widetilde{P}_{i}}(W,X)$ converges to $\alpha_{P^{\ast}}(W,X)$ almost surely under $P^{\ast}$. Since $P^{\ast}$ satisfies
% \begin{equation}
%     var_{P^{\ast}}\left\lbrace \alpha_{P^{\ast}}(W,X)\mid X\right\rbrace > 0
%     \nonumber
% \end{equation}
% with positive probability under $P^{\ast}$,
% by Lemma \ref{lemma:positive_cond_var} we have that there exists $i_{0}$ such that for $i \geq i_{0}$
% \begin{equation}
%     var_{\widetilde{P}_{i}}\left\lbrace \alpha_{\widetilde{P}_{i}}(W,X)\mid X\right\rbrace > 0
%     \label{eq:lowerbound_var_alphatilde}
% \end{equation}
% with positive probability under $\widetilde{P}_{i}$.
% In what follows, we take $i\geq i_0$.

By Condition \ref{cond:alpha_coheres}, we have that $\widetilde{P}_{i}$ satisfies \eqref{eq:cont} and that $\alpha_{\widetilde{P}_{i}}(W,X)$ is equal $\widetilde{P}_{i}$-almost surely to a function of the form
\begin{equation} 
     \sum\limits_{j} \sum\limits_{m} \widetilde{\alpha}^{i}_{j,m} I\left\lbrace W \in S^{W,i}_{j} \right\rbrace I\left\lbrace X \in S^{X,i}_{m} \right\rbrace.
    \label{eq:alpha_tilde}
\end{equation}
Moreover, since by assumption Condition \ref{cond:alpha_convergence_1} holds, equation \eqref{eq:unifconv_tostar} implies that, eventually passing to a subsequence if needed, we can assume that $\alpha_{\widetilde{P}_{i}}(W,X)$ converges to $\alpha_{P^{\ast}}(W,X)$ almost surely under $P^{\ast}$. Since $P^{\ast}$ satisfies
\begin{equation}
    var_{P^{\ast}}\left\lbrace \alpha_{P^{\ast}}(W,X)\mid X\right\rbrace > 0
    \nonumber
\end{equation}
with positive probability under $P^{\ast}$,
by Lemma \ref{lemma:positive_cond_var} we have that there exists $i_{0}$ such that for $i \geq i_{0}$
\begin{equation}
    var_{\widetilde{P}_{i}}\left\lbrace \alpha_{\widetilde{P}_{i}}(W,X)\mid X\right\rbrace > 0
    \label{eq:lowerbound_var_alphatilde}
\end{equation}
with positive probability under $\widetilde{P}_{i}$.
In what follows, we take $i\geq i_0$.

\subsubsection*{Approximating $\widetilde{P}_{i}$ by a law $P_{i}$ under which $(Y,W)$ and $Z$ are not independent given $X$}

The marginal of $(Z,X)$ under $P_{i}$ will be given by $\widetilde{f}^{i}_{(Z,X)}$.
Next, we will construct the conditional distribution of $W$ given $(Z,X)$ under $P_{i}$.
Let 
\begin{align*}
    &\pi^{i}_{W\mid m}=(\pi^{i}_{W\mid X,1,m},\dots,\pi^{i}_{W\mid X,k^{i},m})^{\top}, \text{ for }m=1,\dots,k^{i}_{X},
\end{align*}
and let $\eta^{i}_{W}$ be a small (in absolute value) constant, which we will choose later. 
Let $\Pi^{W\mid Z,i,m} \in \mathbb{R}^{k^{i}\times k^{i}}$ be given by
\begin{equation}
    \Pi^{W\mid Z,i,m}= \widetilde{1} \left[\pi^{i}_{W\mid m}\right]^{\top} + \eta^{i}_{W} I,
\end{equation}
where $\widetilde{1}$ is the $k^{i}$-dimensional vector with all entries equal to 1, and $I\in \mathbb{R}^{k^{i}\times k^{i}}$ is the identity matrix. 
Let $\pi^{W\mid Z,i,m}_{l,j}$ be the entry in row $l$ and column $j$ of $\Pi^{W\mid Z,i,m}$.
For small enough absolute value of $\eta^{i}_{W}$, 
\begin{align}
    &\pi^{W\mid Z,i,m}_{l,j}>0  \text{ for all } l,j,
    \label{eq:piw_positive}
    \\
    & 1+\eta^{i}_{W}\mu_{W}(S^{W,i}_{l})>0 \text{ for all } l.
    \label{eq:normw_positive}
    \\
    & 1+(1/\eta^{i}_{W})\left[\pi^{i}_{W\mid m}\right]^{\top} \widetilde{1}\neq 0  \text{ for all } m.
    \label{eq:shermanw_positive}
\end{align}
The density of $W$ given $(Z,X)$ under $P_{i}$ will be given by 
\begin{equation}
f^{i}_{W\mid Z,X}(w\mid z,x):= \sum_{j,l,m} \frac{\pi^{W\mid Z,i,m}_{l,j}}{1+\eta^{i}_{W}\mu_{W}(S^{W,i}_{l})} I\left\lbrace z\in S^{Z,i}_{l}\right\rbrace I\left\lbrace w\in S^{W,i}_{j}\right\rbrace I\left\lbrace x\in S^{X,i}_{m}\right\rbrace. 
\label{eq:def_wzx}
\end{equation}
Note that $f^{i}_{W\mid Z,X}(w\mid z,x)$ is a well defined density. Indeed, $f^{i}_{W\mid Z,X}(w\mid z,x) \geq 0$ and for all $(z,x)\in S^{Z,i}_{l}\times S^{X,i}_{m}$
\begin{align*}
\int f^{i}_{W\mid Z,X}(w\mid z,x) d\mu_{W}&= \sum_{j}  \frac{\pi^{W\mid Z,i,m}_{l,j}}{1+\eta^{i}_{W}\mu_{W}(S^{W,i}_{l})} \mu_{W}(S^{W,i}_{j})
\\
&
=\sum_{j\neq l } \frac{\pi^{W\mid Z,i,m}_{l,j}}{1+\eta^{i}_{W}\mu_{W}(S^{W,i}_{l})} \mu_{W}(S^{W,i}_{j}) + \frac{\pi^{W\mid Z,i,m}_{l,l}}{1+\eta^{i}_{W}\mu_{W}(S^{W,i}_{l})} \mu_{W}(S^{W,i}_{l})
\\
&
= \sum_{j\neq l } \frac{\pi^{i}_{W\mid X,j,m}}{1+\eta^{i}_{W}\mu_{W}(S^{W,i}_{l})} \mu_{W}(S^{W,i}_{j}) + \frac{\pi^{i}_{W\mid X,l,m}+\eta_{W}^{i}}{1+\eta^{i}_{W}\mu_{W}(S^{W,i}_{l})} \mu_{W}(S^{W,i}_{l})
\\
&
= \sum_{j} \frac{\pi^{i}_{W\mid X,j,m}}{1+\eta^{i}_{W}\mu_{W}(S^{W,i}_{l})} \mu_{W}(S^{W,i}_{j}) + \frac{\eta_{W}^{i}}{1+\eta^{i}_{W}\mu_{W}(S^{W,i}_{l})} \mu_{W}(S^{W,i}_{l})
\\
&
= \frac{1}{1+\eta^{i}_{W}\mu_{W}(S^{W,i}_{l})}  + \frac{\eta_{W}^{i}\mu_{W}(S^{W,i}_{l})}{1+\eta^{i}_{W}\mu_{W}(S^{W,i}_{l})} 
\\
&=1,
\end{align*}
where in the next to last equality we used that
\begin{align*}
    1= \int \widetilde{f}^{i}_{W\mid X}(w\mid x) d\mu_{W} = \sum_{j} \pi^{i}_{W\mid X,j,m} \mu_{W}(S^{W,i}_{j}).
\end{align*}

Next, we will construct the conditional distribution of $Y$ given $(Z,X)$ under $P_{i}$. Let
\begin{align*}
    &\pi^{i}_{Y\mid m}=(\pi^{i}_{Y\mid X,1,m},\dots,\pi^{i}_{Y\mid X,k_{Y}^{i},m})^{\top}, \text{ for }m=1,\dots,k^{i}_{X}
\end{align*}
and let $\eta^{i}_{Y}$ be a small (in absolute value) constant, which we will choose later. 

Let $\widetilde{\alpha}_{m}^{i}$ be the vector with coordinates $\widetilde{\alpha}^{i}_{j,m}$ for $j=1,\dots,k^{i}$, $m=1,\dots,k_{X}^{i}$; see equation \eqref{eq:alpha_tilde}. Let $\iota^{i}_{y}$ be the vector with coordinates $\int_{S^{Y,i}_{h}} y \: d\mu_{Y}$ for $h=1,\dots,k_{Y}^{i}$ and $\widetilde{\mu}^{i}_{Y}$ be the vector with coordinates $\mu_{Y}(S^{Y,i}_{h})$, $h=1,\dots,k^{i}_{Y}$.
Recall that $\iota^{i}_{y}$ and  $\widetilde{\mu}^{i}_{Y}$ are not collinear. Then for $m=1,\dots,k^{i}_{X}$, we can find a matrix $M^{i,m}$ such that 
\begin{align*}
&M^{i,m}\iota^{i}_{y} = \widetilde{\alpha}_{m}^{i},
\\
&M^{i,m}\widetilde{\mu}^{i}_{Y} = 0.
\end{align*}
Let $\Pi^{Y\mid Z,i,m} \in \mathbb{R}^{k^{i}\times k_{Y}^{i}}$ be given by
\begin{equation}
    \Pi^{Y\mid Z,i,m}= \widetilde{1} \left[\pi^{i}_{Y\mid m}\right]^{\top} + \eta^{i}_{Y} M^{i,m}.
    \nonumber
\end{equation}
Why we have chosen to perturbate $\pi^{i}_{Y\mid m}$ using this matrix $M^{i,m}$ in particular will become aparent later on in the proof.
Let $\pi^{Y\mid Z,i,m}_{l,h}$ be the entry in row $l$ and column $h$ of $\Pi^{Y\mid Z,i,m}$.
For small enough $\eta^{i}_{Y}$, 
\begin{align}
    &\pi^{Y\mid Z,i,m}_{l,h}>0  \text{ for all } l,h,m,
    \label{eq:piy_positive}
    % \\
    % &1+\eta^{i}_{Y}\sum_{h} M^{i,m}_{l,h}\mu_{Y}(S^{Y,i}_{h}) >0 \text{ for all } l,m
    % \label{eq:normy_positive}
\end{align}
The density of $Y$ given $(Z,X)$ under $P_{i}$ will be given by
\begin{equation}
f^{i}_{Y\mid Z,X}(y\mid z,x):= \sum_{h,l} \pi^{Y\mid Z,i,m}_{l,h} I\left\lbrace z\in S^{Z,i}_{l}\right\rbrace I\left\lbrace y\in S^{Y,i}_{h}\right\rbrace I\left\lbrace x\in S^{X,i}_{m}\right\rbrace.
\label{eq:def_yzx}
\end{equation}
The proof that $f^{i}_{Y\mid Z}(y\mid z,x)$ is a density is analogous to the proof that $f^{i}_{W\mid Z}(w\mid z,x)$ is a density and is ommited here.
% which satisfies $f^{i}_{Y\mid Z}(y\mid z,x) \geq 0$ and for all $l,m$ and $z\in S^{Z,i}_{l}$, $x\in S^{X,i}_{m}$
% \[
% \int f^{i}_{Y\mid Z,X}(y\mid z,x) d\mu_{Y}= \sum_{h} \frac{\pi^{Y\mid Z,i,m}_{l,h}}{1+\eta^{i}_{Y}\sum_{h}M^{i,m}_{l,h}\mu_{Y}(S^{Y,i}_{h})} \mu_{Y}(S^{Y,i}_{h})=1. 
% \]

For any $h,l,m$, we have that 
\begin{align*}
    &\sup\limits_{(y,z,x)\in S^{Y,i}_{h} \times  S^{Z,i}_{l} \times S^{X,i}_{m}}
    \left\vert f^{i}_{Y\mid Z,X}(y\mid z,x) - \widetilde{f}^{i}_{Y\mid X}(y\mid x) \right\vert 
    = 
    \\ 
    &\sup\limits_{(y,z,x)\in S^{Y,i}_{h} \times  S^{Z,i}_{l} \times S^{X,i}_{m}}
  \left\vert \pi^{Y\mid Z,i,m}_{l,h} - \pi^{i}_{Y\mid m, h}\right\vert 
    =
    \\ 
    &\sup\limits_{(y,z,x)\in S^{Y,i}_{h} \times  S^{Z,i}_{l} \times S^{X,i}_{m}}
  \left\vert \pi^{i}_{Y\mid m, h}+\eta^{i}_{Y}M^{i,m}_{l,h} - \pi^{i}_{Y\mid m, h}\right\vert \underset{\eta^{i}_{Y}\to 0 }{\rightarrow} 0.
\end{align*}
Since the supports  $f^{i}_{Y\mid Z,X}$ and $\widetilde{f}^{i}_{Y\mid X}$ are $\cup_{h,l,m} \: S^{Y,i}_{h}\times S^{Z,i}_{l} \times S^{X,i}_{m}$ and $\cup_{h} S^{Y,i}_{h}\times S^{X,i}_{m}$ respectively, it follows that 
\begin{align}
    &f^{i}_{Y\mid Z,X} \convunif \widetilde{f}^{i}_{Y\mid X} \text{ when } \eta^{i}_{Y}\to 0.
    \label{eq:convunif_fy}
\end{align}
Arguing similarly, we get
\begin{align}
    f^{i}_{W\mid Z,X} \convunif \widetilde{f}^{i}_{W\mid X}\text{ when } \eta^{i}_{W}\to 0.
    \label{eq:convunif_fw}
\end{align}
Equations \eqref{eq:convunif_fy} and \eqref{eq:convunif_fw} imply that
\begin{equation}
f^{i}_{Y\mid Z,X}f^{i}_{W\mid Z,X} \widetilde{f}^{i}_{(Z,X)}  \overset{\text{a.e. }\mu}{\to} \widetilde{f}^{i}_{Y\mid X} \widetilde{f}^{i}_{W\mid X} \widetilde{f}^{i}_{(Z,X)} \text{ when } \eta^{i}_{W}\to 0, \eta^{i}_{Y}\to 0
\label{eq:unifconv_totilde}
\end{equation}
which by Scheffe's lemma implies 
\begin{equation}
    f^{i}_{Y\mid Z,X}f^{i}_{W\mid Z,X} \widetilde{f}^{i}_{(Z,X)} \convlp \widetilde{f}^{i}_{Y\mid X} \widetilde{f}^{i}_{W\mid X} \widetilde{f}^{i}_{(Z,X)} \text{ when } \eta^{i}_{W}\to 0, \eta^{i}_{Y}\to 0
\label{eq:L1conv_totilde}
\end{equation}
We will let $P_{i}$ be the law with density $f^{i}_{Y\mid Z,X}f^{i}_{W\mid Z,X} \widetilde{f}^{i}_{(Z,X)}$, where for the moment $\eta^{i}_{Y}$ and $\eta^{i}_{W}$ are arbitrary but small enough so that $f^{i}_{Y\mid Z,X}f^{i}_{W\mid Z,X} \widetilde{f}^{i}_{(Z,X)}$ is a density and \eqref{eq:piw_positive}, \eqref{eq:normw_positive} and \eqref{eq:shermanw_positive} hold. 
Note that
\begin{align*}
    &f_{P_{i},Y\mid Z,X} = f^{i}_{Y\mid Z,X},
    \\
    &f_{P_{i},W,\mid Z,X}=f^{i}_{W\mid Z,X},
    \\
    &f_{P_{i},(Z,X)}=\widetilde{f}^{i}_{(Z,X)}.
\end{align*}

Since $f_{P_{i}}$ is a rectangular non-negative simple function and Condition \ref{cond:alpha_coheres} holds by assumption, we have that $\alpha_{P_{i}}(W,X)$ is equal $P_{i}$-almost surely to a function of the form
\[ 
     \sum\limits_{j} \sum\limits_{m} \alpha^{i}_{j,m} I\left\lbrace W \in S^{W,i}_{j} \right\rbrace I\left\lbrace X \in S^{X,i}_{m} \right\rbrace.
\]

Since \eqref{eq:unifconv_totilde} holds,
Condition \ref{cond:alpha_convergence_1} implies that, eventually passing to a subsequence if needed, we can assume that 
$$\alpha_{P_{i}}(W,X) \to \alpha_{\widetilde{P}_{i}}(W,X)$$ when $\eta^{i}_{W}\to 0$, $\eta^{i}_{Y}\to 0$, almost surely under $\widetilde{P}_{i}$.
Let $\alpha_{m}^{i}$ be the vector with coordinates $\alpha^{i}_{j,m}$ for $j=1,\dots,k^{i}$, $m=1,\dots,k_{X}^{i}$. Lemma \ref{lemma:conv_alpha_params} then implies that
\begin{align}
    \alpha^{i}_{j,m} \to \widetilde{\alpha}^{i}_{j,m}  \text{ when } \eta^{i}_{W}\to 0, \eta^{i}_{Y}\to 0.
    \label{eq:convergence_alpha_totilde}
\end{align}

\subsubsection*{Showing that $P_{i}\in \mathcal{M}$}

 For any vector $v$, let $\diag(v)$ be the diagonal matrix with diagonal equal to $v$. Given two vectors $u,v$, we let $u/v$ stand for the vector that is formed by coordinate wise division of $u$ by $v$.

We have already argued that \eqref{eq:cont} holds when $P$ is replaced by $P_{i}$. Moreover, since $\widetilde{P}_{i}\in\mathcal{R}$, the set $\mathcal{R}$ satisfies Condition \ref{cond:R} and equation \eqref{eq:unifconv_totilde} holds, we can assume that $P_{i}\in\mathcal{R}$.
 Thus, to prove that $P_{i}\in\mathcal{M}$ we need to show that there exists a solution to \eqref{eq:integral_eq} under $P_{i}$ and that $\alpha_{P_{i}}\in \range(T^{\prime}_{P_{i}})$.

 \subsubsection*{Showing that there exists a solution to \eqref{eq:integral_eq} under $P_{i}$}
Note that for $g(W,X)$ to be a solution of \eqref{eq:integral_eq} under $P_{i}$, it suffices that 
\begin{equation}
    E_{P_{i}}\left\lbrace g(W,X)\mid Z,X\right\rbrace= E_{P_{i}}\left\lbrace Y\mid Z,X\right\rbrace \quad \text{whenever } (Z,X)\in S^{Z,i}_{l}\times S^{X,i}_{m} \text{ for any }l,m.
    \label{eq:integral_eq_discrete}
\end{equation}
We will show that there exists a solution to \eqref{eq:integral_eq_discrete} of the form 
\begin{equation}
    g(W,X) = \sum_{j}\sum_{m} \frac{g^{i}_{j,m}}{\mu_{W}(S^{W,i}_{j})} I\left\lbrace W\in S^{W,i}_{j}\right\rbrace I\left\lbrace X\in S^{X,i}_{m}\right\rbrace,
    \nonumber
\end{equation}
for some constants $g^{i}_{j,m}$. 
Now, when $(Z,X)\in S^{Z,i}_{l}\times S^{X,i}_{m}$, we have that
\begin{align*}
    E_{P_{i}}\left\lbrace Y\mid Z,X\right\rbrace = \sum_{h} \pi^{Y\mid Z,i,m}_{l,h} \int_{S^{Y,i}_{h}} y \: d\mu_{Y}
\end{align*}
and
\begin{align*}
    E_{P^{i}}\left\lbrace g(W,X)\mid Z,X\right\rbrace &=\sum_{j} \frac{\pi^{W\mid Z,i,m}_{l,j}}{1+\eta^{i}_{W}\mu_{W}(S^{W,i}_{l})} \int_{S^{W,i}_{j}} g(w,X) \: d\mu_{W}
    \\
    &=\sum_{j} \frac{\pi^{W\mid Z,i,m}_{l,j}}{1+\eta^{i}_{W}\mu_{W}(S^{W,i}_{l})} g^{i}_{j,m}
\end{align*}
where we used the definitions of $f^{i}_{Y\mid Z,X}$ and $f^{i}_{W\mid Z,X}$ given in \eqref{eq:def_yzx} and \eqref{eq:def_wzx} respectively.

Let $g^{i}_{m}$ be the vector with coordinates $g^{i}_{j,m}$, $j=1,\dots,k^{i}$.
 Let $\widetilde{\mu}^{i}_{W}$ be the vector with coordinates $\mu_{W}(S^{W,i}_{l})$ for $l=1,\dots,k^{i}$.
Then $g$ satisfying \eqref{eq:integral_eq_discrete} is equivalent to the following linear system holding for each $m$
\begin{equation}
\left\lbrace  \diag(1+ \eta^{i}_{W} \widetilde{\mu}^{i}_{W})\right\rbrace^{-1} \Pi^{W\mid Z,i,m} g^{i}_{m} = \Pi^{Y\mid Z,i,m} \iota^{i}_{y}.
\label{eq:main_linear_system}
\end{equation}
In what follows, to make the notation lighter we will ommit all $i$ indices.

Since by \eqref{eq:shermanw_positive} it holds that 
$$
1+(1/\eta_{W})\left[\pi_{W\mid m}\right]^{\top} \widetilde{1}\neq 0
$$ 
for all $m$, by the Sherman-Morrison formula $\Pi^{W\mid Z,m}$ is invertible for all $m$ and
\[
    \left\lbrace \Pi^{W\mid Z,m}\right\rbrace^{-1}=\frac{1}{\eta_{W}}I - \frac{1}{\eta_{W}\pi_{W\mid m}^{\top} \widetilde{1}+\eta_{W}^{2}} \widetilde{1} \pi_{W\mid m}^{\top}.
\]
It follows that if 
\begin{equation}
g^{\dagger}_{m}:=  \left\lbrace \frac{1}{\eta_{W}}I - \frac{1}{\eta_{W}\pi_{W\mid m}^{\top} \widetilde{1}+\eta_{W}^{2}} \widetilde{1} \pi_{W\mid m}^{\top} \right\rbrace \diag\left(1+ \eta_{W} \widetilde{\mu}_{W}\right) \Pi^{Y\mid Z,m} \iota_{y} 
\label{eq:integral_g_discrete}
\end{equation}
then $g^{\dagger}(W,X)$ defined by 
\begin{equation}
    g^{\dagger}(W,X) = \sum_{j}\sum_{m} \frac{g^{^{\dagger}}_{j,m}}{\mu_{W}(S^{W}_{j})} I\left\lbrace W\in S^{W}_{j}\right\rbrace I\left\lbrace X\in S^{X}_{m}\right\rbrace,
    \label{eq:def_g_discrete_solution}
\end{equation}
is a solution to \eqref{eq:integral_eq_discrete}. 

\subsubsection*{Showing that $\alpha_{P_{i}}(W,X)\in \range(T^{\prime}_{P_{i}})$}
We need to show that there exists $h(Z,X)\in L^{2}\lbrace P_{(Z,X)}\rbrace$ such that
\begin{equation}
    E_{P}\left\lbrace h(Z,X)\mid W,X\right\rbrace = \alpha_{P}(W,X)
    \label{eq:riesz_in_range}
\end{equation}
almost surely under $P$. It suffices to show that \eqref{eq:riesz_in_range} holds whenever $(W,X)\in S^{W}_{j}\times S^{X}_{m}$ for all $j,m$. Note that
\begin{align*}
    f_{P,Z\mid W,X} = \frac{f_{P,W\mid Z,X}\widetilde{f}_{(Z,X)}}{f_{P,(W,X)}}.
\end{align*}
It follows that when $(w,x)\in S^{W}_{j}\times S^{X}_{m}$, 
\begin{align*}
    f_{P,Z\mid W,X}(z\mid w,x) = c_{j,m} f_{P,W\mid Z,X}(w\mid z,x) \widetilde{f}_{(Z,X)}(z,x)
\end{align*}
for a positive constant $c_{j,m}$. Now, consider $h(Z,X)$ defined by
\begin{equation*}
    h(Z,X) = \sum\limits_{l,m} \frac{h_{l,m}}{\mu_{Z}(S^{Z}_{l})} I\left\lbrace Z \in S^{Z}_{l}\right\rbrace I\left\lbrace X \in S^{X}_{m}\right\rbrace,
\end{equation*}
for some constants $h_{l,m}$. Then, when $(W,X)\in S^{W}_{j}\times S^{X}_{m}$
\begin{align*}
    E_{P}\left\lbrace h(Z,X)\mid W,X\right\rbrace &= c_{j,m} \int h(z,X) f_{P,Z\mid W,X}(z\mid W,X) \widetilde{f}_{(Z,X)}(z,X)d\mu_{Z}  
    \\
    &=c_{j,m} \sum_{l} \frac{\pi^{W\mid Z,m}_{l,j}}{1+\eta_{W}\mu_{W}(S^{W}_{l})} \pi_{(Z,X),l,m}  h_{l,m}.
\end{align*}
Let $h_{m}$ be the vector with coordinates $h_{l,m}$ for $l=1,\dots,k$, let $c_{m}$ be the vector with coordinates $c_{j,m}$ for $j=1,\dots,k$ and let $\pi_{(Z,X),m}$ be the vector with coordinates $\pi_{(Z,X),l,m}$ for $l=1,\dots,k$.
It follows that \eqref{eq:riesz_in_range} holds for $h(Z,X)$ if and only if for all $m$
\begin{align*}
    h^{\top}_{m} \left\lbrace\diag(1+ \eta_{W} \widetilde{\mu}_{W})\right\rbrace^{-1} \diag(\pi_{(Z,X),m})\Pi^{W\mid Z,m} \diag(c_{m}) = \alpha_{m}
\end{align*}
Since all matrices on the left hand side of the last display are invertible, for each $m$ there exists a solution $h^{\dagger}_{m}$. It follows that 
\begin{equation*}
    h^{\dagger}(Z,X) = \sum\limits_{l,m} \frac{h^{\dagger}_{l,m}}{\mu_{Z}(S^{Z}_{l})} I\left\lbrace Z \in S^{Z}_{l}\right\rbrace I\left\lbrace X \in S^{X}_{m}\right\rbrace,
\end{equation*}
satisfies \eqref{eq:riesz_in_range}. Thus, we have shown that $P\in\mathcal{M}$.

% Next, we need to show that there exists $g(W,X)\in L^{2}\lbrace P_{(W,X)}\rbrace$ such that
% \begin{equation}
%     E_{P}\left\lbrace g(W,X)\mid Z,X\right\rbrace = h^{\dagger}(Z,X)
%     \label{eq:riesz_in_range_2}
% \end{equation}
% almost surely under $P$. It suffices to show that \eqref{eq:riesz_in_range_2} holds whenever $(Z,X)\in S^{Z}_{l}\times S^{X}_{m}$ for all $l,m$. 
% Consider a function of the form 
% \begin{equation*}
%     g(W,X) = \sum\limits_{j,m} \frac{g_{j,m}}{\mu_{W}(S^{W}_{j})} I\left\lbrace W \in S^{W}_{j}\right\rbrace I\left\lbrace X \in S^{X}_{m}\right\rbrace,
% \end{equation*}
% for some constants $g_{j,m}$.
% Then \eqref{eq:riesz_in_range_2} holding for $g$ is equivalent to the following equation holding for all $m$
% \begin{equation}
%     \left\lbrace  \diag(1+ \eta_{W} \widetilde{\mu}_{W})\right\rbrace^{-1} \Pi^{W\mid Z,m} g_{m} = h^{\dagger}_{m},
%     \nonumber
% \end{equation}
% where $g_{m}$ is the vector with coordinates $g_{j,m}$, $j=1,\dots,k$. Since all the matrices on the left hand side of the last display are invertible, a solution $g^{\ast}_{m}$ exists for all $m$. It is easy to check that 
% \begin{equation*}
%     g^{\ast}(W,X) = \sum\limits_{j,m} \frac{g^{\ast}_{j,m}}{\mu_{W}(S^{W}_{j})} I\left\lbrace W \in S^{W}_{j}\right\rbrace I\left\lbrace X \in S^{X}_{m}\right\rbrace,
% \end{equation*}
% is a solution to \eqref{eq:riesz_in_range_2}. Thus, we shown that $\alpha_{P}(W,X)\in \range(T^{\prime}_{P}T_{P})$.

\subsubsection*{Obtaining a closed form expression for $\varphi(P_{i})$}
Note that
\begin{align*}
f_{P,(W,X)}(w,x) &= \int f_{W\mid Z,X}(w\mid z,x) \widetilde{f}_{(Z,X)}(z,x) d\mu_{Z} 
\\
&=\sum_{j,l,m} \frac{\pi^{W\mid Z,m}_{l,j}}{1+\eta^{i}_{W}\mu_{W}(S^{W}_{l})} \mu_{Z}(S^{Z}_{l}) \pi_{(Z,X),l,m} I\left\lbrace w\in S^{W}_{j}\right\rbrace I\left\lbrace x\in S^{X}_{m}\right\rbrace . 
\end{align*}
Then, for $g^{\dagger}(W,X)$ defined in \eqref{eq:def_g_discrete_solution}
\begin{align*}
    &E_{P}\left\lbrace g^{\dagger}(W,X) \alpha_{P}(W,X)\right\rbrace 
    =
    \\
    &\sum_{j,l,m} \frac{\pi^{W\mid Z,m}_{l,j}}{1+\eta^{i}_{W}\mu_{W}(S^{W}_{l})} \mu_{Z}(S^{Z}_{l}) \pi_{(Z,X),l,m}  \int_{S^{X}_{m}}\int_{S^{W}_{j}} g^{\dagger}(w,x) \alpha_{P}(w,x) d\mu_{W}d\mu_{X}
    =
    \\
    &
    \sum_{j,l,m} \frac{\pi^{W\mid Z,m}_{l,j}}{1+\eta_{W}\mu_{W}(S^{W}_{l})} \mu_{Z}(S^{Z}_{l}) \pi_{(Z,X),l,m} \alpha_{j,m} \int_{S^{X}_{m}}\int_{S^{W}_{j}} g^{\dagger}(w,x)  d\mu_{W}d\mu_{X}
    \\
    &
    \sum_{j,l,m} \frac{\pi^{W\mid Z,m}_{l,j}}{1+\eta_{W}\mu_{W}(S^{W}_{l})} \mu_{Z}(S^{Z}_{l}) \pi_{(Z,X),l,m}  \alpha_{j,m} \mu_{X}(S^{X}_{m}) g^{\dagger}_{j,m}.
\end{align*}
It follows from the previous display that
\begin{equation}
    \varphi(P)= \sum_{m} \mu_{X}(S^{X}_{m}) \left\lbrace \diag\left\lbrace\pi_{(Z,X),m}\right\rbrace \widetilde{\mu}_{Z}\right\rbrace^{\top} \left\lbrace \diag(1+ \eta_{W} \widetilde{\mu}_{W})\right\rbrace^{-1} \Pi^{W\mid Z,m}\diag(\alpha_{m})g^{\dagger}_{m},
    \label{eq:varphi_discrete_decomposition}
\end{equation}
where $\widetilde{\mu}_{Z}$ is the vector with coordinates $\mu_{Z}(S^{Z}_{l})$, $l=1,\dots,k$.
%  and $\pi_{(Z,X),m}$ is the vector with coordinates $\pi_{(Z,X),l,m}$ for $l=1,\dots,k$. 

Now, putting together \eqref{eq:integral_g_discrete} and \eqref{eq:varphi_discrete_decomposition} we have that $\varphi(P)$
is equal to 
\begin{align}
    &\sum_{m}  \mu_{X}(S^{X}_{m}) \left\lbrace \diag\left\lbrace\pi_{(Z,X),m}\right\rbrace \widetilde{\mu}_{Z}\right\rbrace^{\top} \left\lbrace \diag(1+ \eta_{W} \widetilde{\mu}_{W})\right\rbrace^{-1} \Pi^{W\mid Z}\diag(\alpha_{m}) \left\lbrace \frac{1}{\eta_{W}}I - \frac{1}{\eta_{W}\pi_{W\mid m}^{\top}\widetilde{1}+\eta_{W}^{2}} \widetilde{1} \pi_{W\mid m}^{\top} \right\rbrace 
    \nonumber
    \\
    &
    \diag\left(\widetilde{1}+ \eta_{W} \widetilde{\mu}_{W}\right) \Pi^{Y\mid Z,m} \iota_{y}.
    \label{eq:main_expression_varphi}
\end{align}

\subsubsection*{Choosing the perturbation factors so that $\varphi(P_{i})=\zeta$}

Now, suppose we take $\eta_{Y}=\gamma \eta_{W}$, for some $\gamma \in\mathbb{R}$. Let $\psi(\gamma,\eta_{W})=\varphi(P_{i})$, taking $P_{i}$ implicitly as a function of $\gamma$ and $\eta_{W}$. Then $\psi(\gamma,\eta_{W})$ is a continuous function over $\mathbb{R}\times \left(\mathbb{R}\setminus\lbrace 0 \rbrace\right)$. We will show that there exists an increasing function $\zeta\mapsto \gamma(\zeta)$ such that $\psi(\gamma(\zeta),\eta_{W}) \to \zeta$ when $\eta_{W} \to 0$ for all $\zeta\in\mathbb{R}$. Lemma \ref{lemma:intermediate_value} would then imply that there exists a sequence $\lbrace (\gamma_{t},\eta_{W,t})\rbrace_{t\geq 1}$ such that $\eta_{W,t}\to 0$ as $t \to \infty$, $(\gamma_{t})_{t\geq 1}$ is bounded and $\psi(\gamma_{t},\eta_{W,t})=\zeta$ for all $t$. Recalling next that \eqref{eq:L1conv_tostar} and \eqref{eq:L1conv_totilde} hold, choosing a $t_{i}$ such that $\eta_{W,t_{i}}$ is small enough so that \eqref{eq:piw_positive}, \eqref{eq:normw_positive}, \eqref{eq:shermanw_positive}, \eqref{eq:piy_positive} hold and moreover 
$$
\Vert f^{i}_{Y\mid Z,X}f^{i}_{W\mid Z,X} \widetilde{f}^{i}_{(Z,X)} - \widetilde{f}^{i}_{Y\mid X} \widetilde{f}^{i}_{W\mid X} \widetilde{f}^{i}_{(Z,X)} \Vert_{L^{1}(\mu)} < 1/i,
$$
we would then arrive at the conclusion that $P_{i}$ satisfies all the requirements stated in Theorem \ref{theo:main}, we would have then arrived at the proof of the theorem. 

It remains to show that  there exists an increasing function $\zeta\mapsto \gamma(\zeta)$ such that $\psi(\gamma(\zeta),\eta_{W}) \to \zeta$ when $\eta_{W} \to 0$ for all $\zeta\in\mathbb{R}$. To do so, we start by further expanding the expression inside the summation in \eqref{eq:main_expression_varphi}. We will expand it from right to left. For now, we will ignore the $\mu_{X}(S^{X}_{m})$ factor. Fix a value of $m$. The first product $ \Pi^{Y\mid Z,m} \iota_{y}$ is
\[
    \widetilde{1} \pi_{Y\mid m}^{\top}\iota_{y} + \eta_{Y}M^{m} \iota_{y}.
\]
Multiplying this product by $\diag\left(\widetilde{1}+ \eta_{W} \widetilde{\mu}_{W}\right)$ we get 
\begin{align*}
    &\diag\left(\widetilde{1}+ \eta_{W} \widetilde{\mu}_{W}\right) \widetilde{1} \pi_{Y\mid m}^{\top}\iota_{y} + \eta_{Y} \diag\left(\widetilde{1}+ \eta_{W} \widetilde{\mu}_{W}\right) M^{m} \iota_{y}=
    \\
    &\left(\widetilde{1}+ \eta_{W} \widetilde{\mu}_{W}\right)  \pi_{Y\mid m}^{\top}\iota_{y} + \eta_{Y} \diag\left(\widetilde{1}+ \eta_{W} \widetilde{\mu}_{W}\right) M^{m}\iota_{y}.
\end{align*}
Next, multiplying the right hand side in the last display by $\left\lbrace \frac{1}{\eta_{W}}I - \frac{1}{\eta_{W}\pi_{W\mid m}^{\top}\widetilde{1}+\eta_{W}^{2}} \widetilde{1} \pi_{W\mid m}^{\top} \right\rbrace $ yields
\begin{align*}
    &\frac{1}{\eta_{W}}\left(\widetilde{1}+ \eta_{W} \widetilde{\mu}_{W}\right)  \pi_{Y\mid m}^{\top}\iota_{y} + \frac{ \eta_{Y}}{\eta_{W}} \diag\left(\widetilde{1}+ \eta_{W} \widetilde{\mu}_{W}\right) M^{m}\iota_{y} 
    \\
    &
    - \frac{1}{\eta_{W}\pi_{W\mid m}^{\top} \widetilde{1}+\eta_{W}^{2}} \widetilde{1} \pi_{W\mid m}^{\top} \left(\widetilde{1}+ \eta_{W} \widetilde{\mu}_{W}\right)  \pi_{Y\mid m}^{\top}\iota_{y}
    -  \frac{\eta_{Y}}{\eta_{W}\pi_{W\mid m}^{\top} \widetilde{1}+\eta_{W}^{2}} \widetilde{1} \pi_{W\mid m}^{\top}  \diag\left(\widetilde{1}+ \eta_{W} \widetilde{\mu}_{W}\right) M^{m}\iota_{y}
\end{align*}
Multiplying the expression in the last display by $\diag(\alpha_{m})$ we get
\begin{align*}
    &\frac{1}{\eta_{W}} \diag(\alpha_{m}) \left(\widetilde{1}+ \eta_{W} \widetilde{\mu}_{W}\right)  \pi_{Y\mid m}^{\top}\iota_{y} + \frac{ \eta_{Y}}{\eta_{W}} \diag(\alpha_{m}) \diag\left(\widetilde{1}+ \eta_{W} \widetilde{\mu}_{W}\right) M^{m}\iota_{y} 
    \\
    &
    - \frac{1}{\eta_{W}\pi_{W\mid m}^{\top} \widetilde{1}+\eta_{W}^{2}} \alpha_{m} \pi_{W\mid m}^{\top} \left(\widetilde{1}+ \eta_{W} \widetilde{\mu}_{W}\right)  \pi_{Y\mid m}^{\top}\iota_{y}
    -  \frac{\eta_{Y}}{\eta_{W}\pi_{W\mid m}^{\top} \widetilde{1}+\eta_{W}^{2}} \alpha_{m}\pi_{W\mid m}^{\top}  \diag\left(\widetilde{1}+ \eta_{W} \widetilde{\mu}_{W}\right) M^{m}\iota_{y}.
\end{align*}
Multiplying the expression above by 
\[
    \widetilde{1} \pi_{W\mid m}^{\top} + \eta_{W}I,
\]
gives 
\begin{align}
    &\frac{1}{\eta_{W}}  \widetilde{1} \pi_{W\mid m}^{\top} \diag(\alpha_{m}) \left(\widetilde{1}+ \eta_{W} \widetilde{\mu}_{W}\right)  \pi_{Y\mid m}^{\top}\iota_{y} + \frac{ \eta_{Y}}{\eta_{W}}  \widetilde{1} \pi_{W\mid m}^{\top} \diag(\alpha_{m}) \diag\left(\widetilde{1}+ \eta_{W} \widetilde{\mu}_{W}\right) M^{m}\iota_{y} + 
    \label{eq:decomposition_eight_terms}
    \\
    &\diag(\alpha_{m}) \left(\widetilde{1}+ \eta_{W} \widetilde{\mu}_{W}\right)  \pi_{Y\mid m}^{\top}\iota_{y} + 
    \eta_{Y}  \diag(\alpha_{m}) \diag\left(\widetilde{1}+ \eta_{W} \widetilde{\mu}_{W}\right) M^{m}\iota_{y} -
    \nonumber
    \\
    & \frac{1}{\eta_{W}\pi_{W\mid m}^{\top} \widetilde{1}+\eta_{W}^{2}}  \widetilde{1} \pi_{W\mid m}^{\top} \alpha_{m} \pi_{W\mid m}^{\top} \left(\widetilde{1}+ \eta_{W} \widetilde{\mu}_{W}\right)  \pi_{Y\mid m}^{\top}\iota_{y} -
    \nonumber
    \\
    &
    \frac{\eta_{Y}}{\eta_{W}\pi_{W\mid m}^{\top} \widetilde{1}+\eta_{W}^{2}} \widetilde{1} \pi_{W\mid m}^{\top} \alpha_{m}\pi_{W\mid m}^{\top}  \diag\left(\widetilde{1}+ \eta_{W} \widetilde{\mu}_{W}\right) M^{m}\iota_{y}-
    \nonumber
    \\
    & - \frac{1}{\pi_{W\mid m}^{\top}\widetilde{1}+\eta_{W}} \alpha_{m} \pi_{W\mid m}^{\top} \left(\widetilde{1}+ \eta_{W} \widetilde{\mu}_{W}\right)  \pi_{Y\mid m}^{\top}\iota_{y}
    -  \frac{\eta_{Y}}{\pi_{W\mid m}^{\top}\widetilde{1}+\eta_{W}} \alpha_{m} \pi_{W\mid m}^{\top}  \diag\left(\widetilde{1}+ \eta_{W} \widetilde{\mu}_{W}\right) M^{m}\iota_{y}.
    \nonumber
\end{align}
Finally, we need to left multiply each of the eight terms in \eqref{eq:decomposition_eight_terms} by
\[
    \left\lbrace \diag\left\lbrace\pi_{(Z,X),m}\right\rbrace \widetilde{\mu}_{Z}\right\rbrace^{\top} \left\lbrace \diag(1+ \eta_{W} \widetilde{\mu}_{W})\right\rbrace^{-1}.
\]

% Recall that we need to show that for any $\zeta$ we can find $\gamma$ such that when we take $\eta_{Y}=\gamma \eta_{W}$, it holds that $\varphi(P)\to \zeta$ when $\eta_{W}\to 0$. 
% We will analyse the terms in the big display in \eqref{eq:decomposition_eight_terms}. 
Recall that, in paragraph leading to \eqref{eq:convergence_alpha_totilde}, we showed that for all $j,m$
\begin{align}
    \alpha_{j,m} \to \widetilde{\alpha}_{j,m}  \text{ when } \eta_{W}\to 0.
    \nonumber 
\end{align}

The fourth and eight term the summation in display \eqref{eq:decomposition_eight_terms} converge to zero when $\eta_{W}\to 0$. 

On the other hand, the sum of the first and fifth term is equal to
\begin{align}
    &\frac{1}{\eta_{W}}  \widetilde{1} \pi_{W\mid m}^{\top} \diag(\alpha_{m}) \left(\widetilde{1}+ \eta_{W} \widetilde{\mu}_{W}\right)  \pi_{Y\mid m}^{\top}\iota_{y} -  \frac{1}{\eta_{W}\pi_{W\mid m}^{\top} \widetilde{1}+\eta_{W}^{2}}  \widetilde{1} \pi_{W\mid m}^{\top} \alpha_{m} \pi_{W\mid m}^{\top} \left(\widetilde{1}+ \eta_{W} \widetilde{\mu}_{W}\right)  \pi_{Y\mid m}^{\top} \iota_{y}=
    % &\frac{1}{\eta_{W}} \widetilde{1} \pi_{W\mid m}^{\top} \left[ \diag(\alpha) \left(\frac{1+ \eta_{W} \widetilde{\mu}_{W}}{1+ \eta_{Y} \widetilde{N}}\right) - \frac{1}{\pi_{W\mid m}^{\top} \widetilde{1}+\eta_{W}} \alpha \pi_{W\mid m}^{\top} \left(\frac{1+ \eta_{W} \widetilde{\mu}_{W}}{1+ \eta_{Y} \widetilde{N}}\right) \right] \pi_{Y\mid m}^{\top}\iota_{y} = 
    \nonumber
    \\
    &\frac{1}{\eta_{W}} \widetilde{1} \pi_{W\mid m}^{\top} \left[ \left\lbrace \diag(\alpha_{m})  - \frac{1}{\pi_{W\mid m}^{\top} \widetilde{1}+\eta_{W}} \alpha_{m} \pi_{W\mid m}^{\top} \right\rbrace \left(\widetilde{1}+ \eta_{W} \widetilde{\mu}_{W}\right) \right]\pi_{Y\mid m}^{\top}\iota_{y}.
    \label{eq:decomp_infty_term}
\end{align}
We can decompose the term in brackets in \eqref{eq:decomp_infty_term} as follows
\begin{align*}
    &\left\lbrace \diag(\alpha_{m})  - \frac{1}{\pi_{W\mid m}^{\top} \widetilde{1}+\eta_{W}} \alpha_{m} \pi_{W\mid m}^{\top} \right\rbrace \widetilde{1} 
    +
    \left\lbrace \diag(\alpha_{m})  - \frac{1}{\pi_{W\mid m}^{\top} \widetilde{1}+\eta_{W}} \alpha_{m} \pi_{W\mid m}^{\top}   \right\rbrace \left\lbrace \left(\widetilde{1}+ \eta_{W} \widetilde{\mu}_{W}\right) - \widetilde{1}\right\rbrace=
    \\
    & \left\lbrace \alpha_{m}  - \frac{1}{\pi_{W\mid m}^{\top} \widetilde{1}+\eta_{W}} \alpha_{m} \pi_{W\mid m}^{\top}\widetilde{1} \right\rbrace 
    + 
    \left\lbrace \diag(\alpha_{m})  - \frac{1}{\pi_{W\mid m}^{\top} \widetilde{1}+\eta_{W}} \alpha_{m} \pi_{W\mid m}^{\top}   \right\rbrace   \eta_{W} \widetilde{\mu}_{W}=
    \\
    & \left\lbrace \frac{\alpha_{m}\pi_{W\mid m}^{\top} \widetilde{1}+\eta_{W}\alpha_{m}-\alpha_{m} \pi_{W\mid m}^{\top}\widetilde{1}}{\pi_{W\mid m}^{\top} \widetilde{1}+\eta_{W}} \right\rbrace 
    + 
    \left\lbrace \diag(\alpha_{m})  - \frac{1}{\pi_{W\mid m}^{\top} \widetilde{1}+\eta_{W}} \alpha_{m} \pi_{W\mid m}^{\top}   \right\rbrace   \eta_{W} \widetilde{\mu}_{W}=
    \\
    & \left\lbrace \frac{\eta_{W}\alpha_{m}}{\pi_{W\mid m}^{\top} \widetilde{1}+\eta_{W}} \right\rbrace 
    + 
    \left\lbrace \diag(\alpha_{m})  - \frac{1}{\pi_{W\mid m}^{\top} \widetilde{1}+\eta_{W}} \alpha_{m} \pi_{W\mid m}^{\top}   \right\rbrace  \eta_{W} \widetilde{\mu}_{W}
\end{align*}
Thus, when $\eta_{W}\to 0$, \eqref{eq:decomp_infty_term} converges to
\begin{align*}
    &\frac{ \widetilde{1} \pi_{W\mid m}^{\top}\widetilde{\alpha}_{m}\pi_{Y\mid m}^{\top}\iota_{y}}{\pi_{W\mid m}^{\top} \widetilde{1}} + 
    \widetilde{1} \pi_{W\mid m}^{\top}
    \left\lbrace  \frac{ \diag(\widetilde{\alpha}_{m})\pi_{W\mid m}^{\top} \widetilde{1}  - \widetilde{\alpha}_{m} \pi_{W\mid m}^{\top}}{\pi_{W\mid m}^{\top} \widetilde{1}}    \right\rbrace  \widetilde{\mu}_{W} \pi_{Y\mid m}^{\top}\iota_{y}=
    \\
    & \frac{\widetilde{1} \pi_{W\mid m}^{\top}}{\pi_{W\mid m}^{\top} \widetilde{1}} \left[ \widetilde{\alpha}_{m} + 
    \left\lbrace  \diag(\widetilde{\alpha}_{m})\pi_{W\mid m}^{\top} \widetilde{1}  - \widetilde{\alpha}_{m} \pi_{W\mid m}^{\top}    \right\rbrace  \widetilde{\mu}_{W}  \right]\pi_{Y\mid m}^{\top}\iota_{y}.
    % =
    % \\
    % & \frac{\widetilde{1} \pi_{W\mid m}^{\top}}{\pi_{W\mid m}^{\top} \widetilde{1}} \left[ \widetilde{\alpha}_{m} + 
    % \left\lbrace  \diag(\widetilde{\alpha}_{m})\pi_{W\mid m}^{\top} \widetilde{1}  - \widetilde{\alpha}_{m} \pi_{W\mid m}^{\top}    \right\rbrace  \widetilde{\mu}_{W}  \right]\pi_{Y\mid m}^{\top}\iota_{y}.
\end{align*}

For the sum of the third and seventh terms in \eqref{eq:decomposition_eight_terms} we have that
\begin{align*}
    &\diag(\alpha_{m}) \left(\widetilde{1}+ \eta_{W} \widetilde{\mu}_{W}\right)  \pi_{Y\mid m}^{\top}\iota_{y} -  \frac{1}{\pi_{W\mid m}^{\top}\widetilde{1}+\eta_{W}} \alpha_{m} \pi_{W\mid m}^{\top} \left(\widetilde{1}+ \eta_{W} \widetilde{\mu}_{W}\right)  \pi_{Y\mid m}^{\top}\iota_{y} \underset{\eta_{W}\to 0 }{\rightarrow}
    &
    \\
    &\widetilde{\alpha}_{m} \pi_{Y\mid m}^{\top}\iota_{y} - \frac{1}{\pi_{W\mid m}^{\top}\widetilde{1}} \widetilde{\alpha}_{m} \pi_{W\mid m}^{\top} \widetilde{1} \pi_{Y\mid m}^{\top}\iota_{y} = 0.
\end{align*}

Finally, for the sum of the second and sixth terms  in \eqref{eq:decomposition_eight_terms} we have that 
\begin{align*}
    &\frac{ \eta_{Y}}{\eta_{W}}  \widetilde{1} \pi_{W\mid m}^{\top} \diag(\alpha_{m}) \diag\left(\widetilde{1}+ \eta_{W} \widetilde{\mu}_{W}\right) M^{m}\iota_{y} - \frac{\eta_{Y}}{\eta_{W}\pi_{W\mid m}^{\top} \widetilde{1}+\eta_{W}^{2}} \widetilde{1} \pi_{W\mid m}^{\top} \alpha_{m} \pi_{W\mid m}^{\top}  \diag\left(\widetilde{1}+ \eta_{W} \widetilde{\mu}_{W}\right) M^{m}\iota_{y}=
    \\
    &
    \frac{\eta_{Y}}{\eta_{W}}  \widetilde{1} \pi_{W\mid m}^{\top} 
    \left[ 
        \diag(\alpha_{m})  - 
         \frac{1}{\pi_{W\mid m}^{\top}\widetilde{1}+\eta_{W}}  \alpha_{m}\pi_{W\mid m}^{\top}  
    \right] \diag\left(1+ \eta_{W} \widetilde{\mu}_{W}\right) M^{m}\iota_{y} \underset{\eta_{W}\to 0 }{\rightarrow}
    \\
    &
    \gamma  \widetilde{1} \pi_{W\mid m}^{\top} 
    \left[ 
        \diag(\widetilde{\alpha}_{m})  - 
         \frac{\widetilde{\alpha}_{m}\pi_{W\mid m}^{\top}  }{\pi_{W\mid m}^{\top}\widetilde{1}}  
    \right] M^{m}\iota_{y} = 
    \\
    &
    \gamma  \widetilde{1} 
    \left[ 
        \pi_{W\mid m}^{\top} \diag(\widetilde{\alpha}_{m})  - 
         \frac{\pi_{W\mid m}^{\top} \widetilde{\alpha}_{m}\pi_{W\mid m}^{\top}  }{\pi_{W\mid m}^{\top}\widetilde{1}}  
    \right] M^{m}\iota_{y} =
    \\
    &
    \gamma  \widetilde{1} 
    \left[ 
        \pi_{W\mid m}^{\top} \diag(\widetilde{\alpha}_{m}) M^{m}\iota_{y}   - 
        \frac{\pi_{W\mid m}^{\top} \widetilde{\alpha}_{m}\pi_{W\mid m}^{\top}  }{\pi_{W\mid m}^{\top}\widetilde{1}}     M^{m}\iota_{y} 
    \right]
\end{align*}
In conclusion, as $\eta_{W} \to 0$, the sum in display\eqref{eq:decomposition_eight_terms} converges to
\begin{align}
   &\gamma  \widetilde{1} 
    \left[ 
        \pi_{W\mid m}^{\top} \diag(\widetilde{\alpha}_{m}) M^{m}\iota_{y}   - \frac{\pi_{W\mid m}^{\top} \widetilde{\alpha}_{m}\pi_{W\mid m}^{\top}  }{\pi_{W\mid m}^{\top}\widetilde{1}}     M^{m}\iota_{y}  
    \right] + 
    \nonumber
    \\
    &\frac{\widetilde{1} \pi_{W\mid m}^{\top}}{\pi_{W\mid m}^{\top} \widetilde{1}} \left[ \widetilde{\alpha}_{m} + 
    \left\lbrace  \diag(\widetilde{\alpha}_{m})\pi_{W\mid m}^{\top} \widetilde{1}  - \widetilde{\alpha}_{m} \pi_{W\mid m}^{\top}    \right\rbrace  \widetilde{\mu}_{W}  \right]\pi_{Y\mid m}^{\top}\iota_{y} .
    \label{eq:varphi_pre_left_mult}
\end{align}
Now, when $\eta_{W}\to 0$
\[
    \left\lbrace \diag\left\lbrace\pi_{(Z,X),m}\right\rbrace \widetilde{\mu}_{Z}\right\rbrace^{\top} \left\lbrace \diag(1+ \eta_{W} \widetilde{\mu}_{W})\right\rbrace^{-1} \to \left\lbrace \diag\left\lbrace\pi_{(Z,X),m}\right\rbrace \widetilde{\mu}_{Z}\right\rbrace^{\top} .
\]
Then, left multiplying \eqref{eq:varphi_pre_left_mult} by $\left\lbrace \diag\left\lbrace\pi_{(Z,X),m}\right\rbrace \widetilde{\mu}_{Z}\right\rbrace^{\top}$, we see that when $\eta_{W} \to 0$, $\varphi(P)$ converges to
\begin{align}
    &\gamma   \sum_{m}   \mu_{X}(S^{X}_{m}) \tau_{m} a_{m} + \sum_{m}   \mu_{X}(S^{X}_{m}) \tau_{m} b_{m}
    \label{eq:expression1_varphi}
\end{align}
where we defined   
\begin{align*}
    &\tau_{m} = \sum_{l} \pi_{(Z,X),l,m} \mu_{Z}(S^{Z}_{l}),
    \\
    &a_{m} =        \pi_{W\mid m}^{\top} \diag(\widetilde{\alpha}_{m}) M^{m}\iota_{y}   - 
    \frac{\pi_{W\mid m}^{\top} \widetilde{\alpha}_{m}\pi_{W\mid m}^{\top}  }{\pi_{W\mid m}^{\top}\widetilde{1}}     M^{m}\iota_{y} ,
    \\
    & b_{m}=\sum_{m} \tau_{m}  \mu_{X}(S^{X}_{m})\frac{\pi_{W\mid m}^{\top}}{\pi_{W\mid m}^{\top} \widetilde{1}} \left[ \widetilde{\alpha}_{m} + 
    \left\lbrace  \diag(\widetilde{\alpha}_{m})\pi_{W\mid m}^{\top} \widetilde{1}  - \widetilde{\alpha}_{m} \pi_{W\mid m}^{\top}    \right\rbrace  \widetilde{\mu}_{W}  \right]\pi_{Y\mid m}^{\top}\iota_{y} .
\end{align*} 
Note that $\tau_{m}$ is strictly positive.

Recall that $M_{m}$ is such that
\begin{align*}
    & M^{m} \iota_{y}= \widetilde{\alpha}_{m},
\end{align*}
and hence 
\begin{align}
    a_{m}=\left[\pi_{W\mid m}^{\top} \diag(\widetilde{\alpha}_{m}) \widetilde{\alpha}_{m}   - 
    \frac{\pi_{W\mid m}^{\top} \widetilde{\alpha}_{m}\pi_{W\mid m}^{\top}   \widetilde{\alpha}_{m}}{\pi_{W\mid m}^{\top}\widetilde{1}}\right]
    \nonumber
\end{align}
The expression in the last display is equal to 
\begin{equation}
    \left( \sum\limits_{j}\pi_{W\mid X,j,m} \right)\left\lbrace \frac{\sum\limits_{j} \pi_{W\mid X,j,m} \widetilde{\alpha}^{2}_{m,j}}{\sum\limits_{j}\pi_{W\mid X,j,m}} - \left(\frac{\sum\limits_{j} \pi_{W\mid X,j,m} \widetilde{\alpha}_{m,j}}{\sum\limits_{j}\pi_{W\mid X,j,m}}\right)^{2}\right\rbrace
\label{eq:variance_alphatilde_discrete}
\end{equation}
By \eqref{eq:lowerbound_var_alphatilde}, 
\begin{equation}
    var_{\widetilde{P}}\left\lbrace \alpha_{\widetilde{P}}(W,X)\mid X\right\rbrace > 0
\nonumber
\end{equation}
with positive probability under $\widetilde{P}$. Note that this implies that there exists $m$ such that $\widetilde{\alpha}_{m}$ is not a constant vector. Indeed, if for all $m$ it holds that $\widetilde{\alpha}_{j,m}=\beta_{m}$ for some $\beta_{m}$ then
\begin{align*}
    \alpha_{\widetilde{P}}(W,X)=\sum\limits_{j} \sum\limits_{m} \beta_{m} I\left\lbrace W \in S^{W}_{j} \right\rbrace I\left\lbrace X \in S^{X}_{m} \right\rbrace&=\sum\limits_{j} I\left\lbrace W \in S^{W}_{j} \right\rbrace \sum\limits_{m} \beta_{m} I\left\lbrace X \in S^{X}_{m} \right\rbrace  
    \\
    &=\sum\limits_{m} \beta_{m} I\left\lbrace X \in S^{X}_{m} \right\rbrace
\end{align*}
with probability one under $\widetilde{P}$, because $\sum\limits_{j} I\left\lbrace W \in S^{W}_{j} \right\rbrace =1$ with probability one under $\widetilde{P}$. In this case $\alpha_{\widetilde{P}}(W,X)$ is a function of $X$ only, and hence the variance of $\alpha_{\widetilde{P}}(W,X)$ given $X$ is zero almost surely, which is a contradiction.

Then, there exists an $m$ such that $\widetilde{\alpha}_{m}$ is not a constant vector. Since $\pi_{W\mid X,j,m}>0$ for all $j,m$, this implies that the expression in \eqref{eq:variance_alphatilde_discrete} is positive for at least one $m$, and thus $a_{m}$ is non-negative for all $m$ and positive for at least one $m$. 
Choosing
$$
\gamma = \gamma(\zeta)=\frac{\zeta - \sum_{m}\mu_{X}(S^{X}_{m}) \tau_{m} b_{m}}{\sum_{m}\mu_{X}(S^{X}_{m}) \tau_{m} a_{m}},
$$
we have that $\varphi(P)\to \zeta$ when $\eta_{W}\to 0$, which is what we wanted to show.

\subsection{Proof of Corollary \ref{coro:confidence}}

Let $P^{\ast}$ satisfy Condition \ref{cond:Past}.
Let $(\zeta_{k})_{k\geq 1}$ be a sequence contained in $\mathcal{S}$ such that $\zeta_{k}\to \sup \mathcal{S}$ and $\zeta_{k+1}\geq \zeta_{k}$ for all $k$. By Theorem \ref{theo:main} there exists  a sequence $(P_{k})_{k\geq 1}$ contained in $\mathcal{M}$ such that 
$\varphi(P_{k})=\zeta_{k}$ for all $k$ and $\Vert P_{k} - P^{\ast}\Vert_{TV}\to 0$. Fix a value of $n$ and let $P^{n}_{k}$ be the product measure of $P_{k}$ with itself $n$ times, and define $P^{\ast,n}$ analogously. Since $\Vert P_{k} - P^{\ast}\Vert_{TV}\to 0$, it follows that $\Vert P^{n}_{k} - P^{\ast,n}\Vert_{TV}\to 0$; see Chapter 3, Section 5 of \cite{pollard}.

Then,
\begin{align*}
     P^{\ast}\left\lbrace  \sup C_{n} \geq \sup \mathcal{S} \right\rbrace &\geq P^{\ast}\left\lbrace \sup C_{n} \geq \zeta_{k} \text{ for all k}\right\rbrace 
    \\
    &= \lim_{k} P^{\ast}\left\lbrace \sup C_{n} \geq \zeta_{k}  \right\rbrace
    \\
    &\geq  \liminf_{k} P^{\ast}\left\lbrace \zeta_{k} \in C_{n} \right\rbrace
    \\
    &=   \liminf_{k} \left[ P_{k}\left\lbrace \zeta_{k} \in C_{n} \right\rbrace + P^{\ast}\left\lbrace \zeta_{k} \in C_{n} \right\rbrace -  P_{k}\left\lbrace \zeta_{k} \in C_{n} \right\rbrace \right]
    \\
    &\geq \liminf_{k} \left[ P_{k}\left\lbrace \zeta_{k} \in C_{n} \right\rbrace -  \Vert P^{n}_{k} - P^{\ast,n}\Vert_{TV}\right]
    \\
    &=  \liminf_{k} P_{k}\left\lbrace \zeta_{k} \in C_{n} \right\rbrace -  \lim_{k} \Vert P^{n}_{k} - P^{\ast,n}\Vert_{TV}
    \\
    &=  \liminf_{k} P_{k}\left\lbrace \varphi(P_{k}) \in C_{n} \right\rbrace 
    \\
    &\geq \inf_{P\in\mathcal{M}}P \left\lbrace \varphi(P) \in C_{n} \right\rbrace.
\end{align*}
where in the first equality in the last display, we used the fact that the events $\lbrace \sup C_{n} \geq \zeta_{k}\rbrace $ for $k\in\mathbb{N}$ are nested. Thus 
\begin{align*}
    \liminf_{n} P^{\ast}\left\lbrace  \sup C_{n} \geq \sup \mathcal{S} \right\rbrace \geq 1-\alpha.
\end{align*}
Using an entirely analogous argument (ommited here) we can show that 
\begin{align*}
    \liminf_{n} P^{\ast}\left\lbrace  \inf C_{n} \leq \inf \mathcal{S} \right\rbrace \geq 1-\alpha ,
\end{align*}
and hence it follows
that 
\begin{align*}
    \liminf_{n} P^{\ast}\left\lbrace   \sup C_{n} \geq \sup \mathcal{S} \text{ and }\inf C_{n} \leq \inf \mathcal{S} \right\rbrace \geq 1-2\alpha ,
\end{align*}
which implies
\begin{align}
    \liminf_{n} P^{\ast}\left\lbrace   diam(C_{n})\geq diam(\mathcal{S}) \right\rbrace \geq 1-2\alpha.
    \label{eq:pstar_diam}
\end{align}

Now, fix $\varepsilon$. By \eqref{eq:pstar_diam} there exists $n_{0}$ such that for all $n\geq n_{0}$,
\begin{align}
    P^{\ast}\left\lbrace   diam(C_{n})\geq diam(\mathcal{S}) \right\rbrace \geq 1-2\alpha - \varepsilon/2.
    \nonumber
\end{align}
Fix $n\geq n_{0}$. Arguing as before, we can find $P\in\mathcal{M}$ such that $\Vert P^{n} - P^{\ast,n}\Vert_{TV}<\varepsilon/2$. Thus
\begin{align}
    \sup_{P\in\mathcal{M}}P\left\lbrace   diam(C_{n})\geq diam(\mathcal{S}) \right\rbrace \geq 1-2\alpha - \varepsilon.
    \nonumber
\end{align}
Since the last display holds for all $n\geq n_{0}$, we conclude that
\begin{align}
    \liminf_{n} \sup_{P\in\mathcal{M}}P\left\lbrace   diam(C_{n})\geq diam(\mathcal{S}) \right\rbrace \geq 1-2\alpha,
    \nonumber
\end{align}
which is what we wanted to show.

\subsection{Proof of Proposition \ref{coro:minimax}}

We will use Le Cam's method. Take $\zeta_{0},\zeta_{1}\in\mathcal{S}$. We showed in the proof of Theorem \ref{theo:main} how to construct three densities $f_{0,t},f_{1,t}$ and $\widetilde{f}$ such that 
\begin{itemize}
    \item $f_{0,t},f_{1,t}$ and $\widetilde{f}$ are rectangular non-negative simple functions, with support sets $S^{Y}_{h}$, $h=1,\dots,k_{Y}$, $S^{Z}_{l}$, $l=1,\dots,k$, $S^{W}_{j}$, $j=1,\dots,k$ and $S^{X}_{m}$, $m=1,\dots,k_{X}$.
    \item If $f^{h,l,j,m}_{0,t},f^{h,l,j,m}_{1,t}$ and $f^{h,l,j,m}$ are the values that $f_{0,t},f_{1,t}$ and $\widetilde{f}$ respectively take over the set $S^{Y}_{h}\times S^{Z}_{l} \times S^{W}_{j}\times S^{X}_{m}$ then $f^{h,l,j,m}_{0,t}\to f^{h,l,j,m}$ and $f^{h,l,j,m}_{1,t}\to f^{h,l,j,m}$ when $t\to \infty$.
    \item If $P_{0,t}$ and $P_{1,t}$ have densities $f_{0,t}$ and $f_{1,t}$ respectively then $P_{0,t} \in \mathcal{M}$, $P_{1,t} \in \mathcal{M}$ and $\varphi(P_{0,t})=\zeta_{0}$, $\varphi(P_{1,t})=\zeta_{1}$ for all $t$.
\end{itemize}

Let $P^{n}_{0,t}$ be the product measure obtained by multiplying $P_{0,t}$ with itself $n$ times, and define $P^{n}_{1,t}$ analogously.
 By equation 15.14 of \cite{wainwright}, for all $t$
\begin{equation}
    \inf\limits_{\widehat{\varphi}_{n}} \sup\limits_{P\in \mathcal{M}} E_{P} \left[ \min \left\lbrace \left\vert \widehat{\varphi}_{n} -\varphi(P) \right\vert,1 \right\rbrace \right] \geq \frac{\min(\vert \zeta_{0} - \zeta_{1} \vert,1)}{4}\left(1 -\Vert P^{n}_{0,t} - P^{n}_{1,t}\Vert_{TV} \right) ,
    \label{eq:lecam}
\end{equation}
By Pinsker's inequality 
$$
\Vert P^{n}_{0,t} - P^{n}_{1,t}\Vert_{TV} \leq \sqrt{\frac{1}{2} D(P^{n}_{0,t}\Vert P^{n}_{1,t})},
$$
where $D(\cdot \Vert \cdot)$ is the Kullback-Leibler divergence. It is well known that 
$$
D(P^{n}_{0,t}\Vert P^{n}_{1,t})= n D(P_{0,t}\Vert P_{1,t}).
$$
Now 
\begin{align*}
    D(P_{0,t}\Vert P_{1,t}) &= \int \log\left( \frac{f_{0,t}}{f_{1,t}}\right) f_{0,t} d\mu  
    \\
    &=\sum_{h,l,j,m} \log\left( \frac{f^{h,l,j,m}_{0,t}}{f^{^{h,l,j,m}}_{1,t}}\right) f^{h,l,j,m}_{0,t} \mu_{Y}(S^{Y}_{h})\mu_{Z}(S^{Z}_{l})\mu_{W}(S^{W}_{j})\mu_{X}(S^{X}_{m})
    \\
    &\overset{t\to\infty}{\to }
    \sum_{h,l,j,m} \log\left( \frac{f^{h,l,j,m}}{f^{^{h,l,j,m}}}\right) f^{h,l,j,m} \mu_{Y}(S^{Y}_{h})\mu_{Z}(S^{Z}_{l})\mu_{W}(S^{W}_{j})\mu_{X}(S^{X}_{m})
    \\
    &=0.
\end{align*}
In particular, $\Vert P^{n}_{0,t} - P^{n}_{1,t}\Vert_{TV}\to 0 $, and hence by \eqref{eq:lecam}
\begin{equation}
    \inf\limits_{\widehat{\varphi}_{n}} \sup\limits_{P\in \mathcal{M}} E_{P} \left[ \min \left\lbrace \left\vert \widehat{\varphi}_{n} -\varphi(P) \right\vert,1 \right\rbrace \right] \geq \frac{\min(\vert \zeta_{0} - \zeta_{1} \vert,1)}{4},
    \nonumber
\end{equation}
which finishes the proof.
% Taking the supremum over $\zeta_{0},\zeta_{1}\in \mathcal{S}$ on the right hand side finishes the proof.

\subsection{Proof of Proposition \ref{coro:no_score_test}}
Assume for the sake of contradiction that 
$$
\limsup_{n\to\infty} \sup_{P\in\mathcal{M}}d_{LP}(P_{{\mathcal{W}}_{n}},\Phi)=0
$$
By Lemma \ref{lemma:levy}, for all $\delta>0$, there exists $L>0$ such that 
$$
\limsup_{n} \sup_{P\in\mathcal{M}} P\left(\vert {\mathcal{W}}_{n} \lbrace \varphi(P)\rbrace\vert > L\right) <\delta/4.
$$
Thus, there exists $n_{0}$ such that for all $n\geq n_{0}$
$$
\sup_{P\in\mathcal{M}} P\left(\vert \mathbb{P}_{n}\left\lbrace \psi^{1}_{\widehat{g},\widehat{q},\varphi(P)}(O)\right\rbrace - \varphi(P) \vert> L \widehat{\sigma}_{\widehat{g}_{n},\widehat{q}_{n},\varphi(P)}/\sqrt{n}\right) <\delta/2.
$$
Since 
$\widehat{\sigma}_{\widehat{g}_{n},\widehat{q}_{n},\varphi(P)} $ converges uniformly in probability to $\sigma_{P}$ and
$\sup_{P\in\mathcal{M}} \sigma_{P} <\infty$, 
there exists $n_{1}$ such that $n_{1}\geq n_{0}$ and for all $n\geq n_{1}$
$$
 \sup_{P\in\mathcal{M}} P\left( \widehat{\sigma}_{\widehat{g}_{n},\widehat{q}_{n},\varphi(P)}> 2\sup_{P\in\mathcal{M}} \sigma_{P}  \right) <\delta/2.
$$
Hence, for all $n\geq n_{1}$,
\begin{align*}
    \sup_{P\in\mathcal{M}} P\left(\vert \mathbb{P}_{n}\left\lbrace \psi^{1}_{\widehat{g},\widehat{q},\varphi(P)}(O)\right\rbrace - \varphi(P) \vert>  (2L\sup_{P\in\mathcal{M}} \sigma_{P} )/\sqrt{n}\right)<\delta.
\end{align*}
This implies that $\mathbb{P}_{n}\left\lbrace \psi^{1}_{\widehat{g},\widehat{q},\theta}(O)\right\rbrace$ is a uniformly consistent estimator of $\varphi(P)$, which contradicts Proposition \ref{coro:minimax}.

\subsection{Auxiliary results}\label{sec:aux}

\begin{lemma}\label{lemma:approximating_density}
    Assume Condition \ref{cond:product_measure} holds. Fix $P\in \mathcal{P}(\mu)$.Then for each $i\in \mathbb{N}$ there exist 
    \begin{enumerate}
        \item four collections of pairwise disjoint Borel sets 
        \begin{align*}
            &  \lbrace S^{Y,i}_{h}: h\in \lbrace 1,\dots,k^{i}_{Y}  \rbrace\rbrace \lbrace S^{Z,i}_{l}: l\in \lbrace 1,\dots,k^{i}  \rbrace\rbrace, \lbrace S^{W,i}_{j}: j\in \lbrace 1,\dots, k^{i} \rbrace \rbrace, \lbrace S^{X,i}_{m}: m\in \lbrace 1,\dots, k^{i}_{X} \rbrace \rbrace
        \end{align*} 
        such that $0<\mu_{Y}(S^{Y,i}_{h})<\infty$ for all $h\in \lbrace 1,\dots, k^{i}_{Y} \rbrace$, $0<\mu_{Z}(S^{Z,i}_{l})<\infty$ for all $l\in \lbrace 1,\dots, k^{i} \rbrace$,
        $0<\mu_{W}(S^{W,i}_{j})<\infty$ for all $j\in \lbrace 1,\dots, k^{i} \rbrace$,
        $0<\mu_{X}(S^{X,i}_{m})<\infty$ for all $m\in \lbrace 1,\dots, k^{i}_{X} \rbrace$,$k^{i}\geq 2$, $k_{Y}^{i}\geq 2$   
        and
        \item a set of positive numbers 
        \begin{align*}
            % &\lbrace \pi^{i}_{Y,h}: h\in \lbrace 1,\dots,k^{i}_{Y}  \rbrace\rbrace,
            \lbrace \pi^{i}_{h,l,j,m}: h\in \lbrace 1,\dots,k^{i}_{Y}  \rbrace, j\in \lbrace 1,\dots,k^{i}\rbrace, m\in \lbrace 1,\dots,k^{i}_{X}  \rbrace, l\in \lbrace 1,\dots, k^{i} \rbrace \rbrace,
        \end{align*} 
        \end{enumerate}
        such that the function
        \begin{align*}
            % &\widetilde{f}^{i}_{Y}(y)=\sum\limits_{h} \pi^{i}_{Y,h} I\lbrace y\in S^{Y,i}_{h}\rbrace, 
            % \\
            &\widetilde{f}^{i}(y,z,z,x)=\sum\limits_{h,l,j,m} \pi^{i}_{h,j,l,m} I\lbrace y\in S^{Y,i}_{h}\rbrace I\lbrace w\in S^{W,i}_{j}\rbrace I\lbrace z\in S^{Z,i}_{l}\rbrace I\lbrace x\in S^{X,i}_{m}\rbrace,
        \end{align*}
        satisfies that it is non-negative, integrates to 1 under $\mu$, and 
        \begin{align*}
            % &\widetilde{f}^{i}_{Y} \overset{\text{a.e. }\mu}{\to} f_{P^\ast, Y},
            % \\
            &\widetilde{f}^{i} \overset{\text{a.e. }\mu}{\to} f_{P}, \text{ when }i\to\infty.
        \end{align*} 
      Moreover, we can: (i) choose the sets $S^{Y,i}_{h}$ such that the vector with coordinates $\int_{S^{Y,i}_{h}} y \: d\mu_{Y}$, $h=1,\dots,k^{i}_{Y}$ is not collinear with the vector of coordinates $\mu_{Y}(S^{Y,i}_{h})$, $h=1,\dots,k^{i}_{Y}$, and (ii) choose the support sets
        such that if any of the variables $Y,Z,W$ or $X$ has a finite range, then its support sets are singletons, each containing an element from the variable's range, and their union forms the variable's support.

\end{lemma}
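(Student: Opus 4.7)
The plan is to build $\widetilde{f}^{i}$ in three layers: first a standard simple-function approximation on rectangles, then a small strictly-positive perturbation to force all coefficients to be positive, and finally a choice of $Y$-partition that makes the non-collinearity condition automatic. Because $f_{P}$ is a density with respect to the product measure $\mu = \mu_{Y}\times\mu_{Z}\times\mu_{W}\times\mu_{X}$, for each $i$ I would first fix a rectangle $B^{i}=B^{Y,i}\times B^{Z,i}\times B^{W,i}\times B^{X,i}$ of finite $\mu$-measure so large that $\int_{B^{i}} f_{P}\, d\mu \to 1$, and replace $f_{P}$ by its truncation $\min(f_{P},R_{i}) I_{B^{i}}$ with $R_{i}\to\infty$. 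Next I partition each factor of $B^{i}$ into finitely many disjoint Borel pieces of positive finite measure and let $\widetilde{f}^{i,0}$ be the conditional expectation of $f_{P}$ with respect to the $\sigma$-algebra generated by the resulting rectangles. Standard martingale/Lebesgue differentiation applied along a refining sequence of such partitions yields $\widetilde{f}^{i,0} \overset{\text{a.e. }\mu}{\to} f_{P}$.

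To arrange $k^{i}_{Z} = k^{i}_{W} = k^{i}$ I split on the two cases of Condition \ref{cond:product_measure}. In case 1, the factors $\mu_{Z_{1}}$ and $\mu_{W_{1}}$ are atomless, so one can refine the partitions of $B^{Z,i}$ and $B^{W,i}$ in the atomless coordinates to whatever common cardinality $k^{i}\geq 2$ is desired; the refinement respects the product structure and preserves a.e.\ convergence. In case 2 one simply takes the singleton partitions of $\mathcal{Z}$ and $\mathcal{W}$, for which $k^{i} = \#\mathcal{Z} = \#\mathcal{W}$ automatically, with no $i$ dependence. In both cases the $Y$ partition can be taken to have $k^{i}_{Y}\geq 2$ pieces, using either atomlessness of $\mu_{Y}$ or the hypothesis that the counting support of $\mu_{Y}$ has at least two elements.

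For the positivity of the coefficients I would set $\widetilde{f}^{i}= (1-\varepsilon_{i})\widetilde{f}^{i,0} + \varepsilon_{i} u^{i}$ where $u^{i}$ is the uniform density on $B^{i}$ restricted to the same rectangular grid (so it has the form \eqref{eq:simple} with strictly positive coefficients on every rectangle in the grid) and $\varepsilon_{i}\downarrow 0$. This is still a density, is still a rectangular simple function in the sense of Definition \ref{def:simple}, and, after passing to a subsequence if necessary, still converges $\mu$-a.e.\ to $f_{P}$; moreover every coefficient $\pi^{i}_{h,l,j,m}$ is now strictly positive. For the non-collinearity of the vectors $\bigl(\int_{S^{Y,i}_{h}} y\, d\mu_{Y}\bigr)_{h}$ and $\bigl(\mu_{Y}(S^{Y,i}_{h})\bigr)_{h}$: if $\mu_{Y}$ is a counting measure on a finite set with two or more distinct elements $y_{1},\dots,y_{k_{Y}}$, taking $S^{Y,i}_{h}=\{y_{h}\}$ gives vectors $(y_{h})_{h}$ and $(1,\dots,1)$, which are not collinear because the $y_{h}$ are distinct; if $\mu_{Y}$ is atomless, one can place a dividing point of the $Y$-partition so that the conditional means of $y$ on two adjacent pieces differ, which already suffices to break collinearity. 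Both modifications can be made without disturbing the a.e.\ convergence, since they amount to a bounded, measure-preserving adjustment of a null fraction of the partition cells.

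The main obstacle is coordinating these requirements simultaneously with the cardinality constraint $k^{i}_{Z}=k^{i}_{W}$. In case 2 the grid in $Z$ and $W$ is forced on us (singletons of $\mathcal{Z},\mathcal{W}$), so the pointwise convergence of $\widetilde{f}^{i,0}$ to $f_{P}$ must be obtained purely by refining along the $Y$ and $X$ coordinates, where atomlessness or the finite-support assumption provides enough flexibility. In case 1 the subtlety is that refinement in the continuous subcomponents $\mu_{Z_{1}}$ and $\mu_{W_{1}}$ must be coupled to the refinement used in $\mu_{Y}$ and $\mu_{X}$ so that the common cardinality $k^{i}$ still matches and the whole partition generates the Borel $\sigma$-algebra of $B^{i}$; this can be arranged by taking, at stage $i$, a $2^{i}$-fold dyadic refinement in each continuous coordinate and a finite product refinement in the discrete coordinates. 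Once this coordinated refining sequence is in place, the three layers above combine routinely to yield the claimed approximating densities.
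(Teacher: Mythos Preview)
Your proposal is sound and follows essentially the same architecture as the paper's proof: build a rectangular simple approximation (the paper cites Lemma~A.2 of \cite{Canay2013}, you sketch a direct martingale/conditional-expectation argument), then adjust to force equal cardinalities $k^{i}_{Z}=k^{i}_{W}$, strict positivity of the coefficients, $k^{i}_{Y}\geq 2$, and non-collinearity of the two $Y$-vectors. Your mixture trick $(1-\varepsilon_{i})\widetilde{f}^{i,0}+\varepsilon_{i}u^{i}$ for strict positivity is in fact cleaner than the paper's case~2 device (padding with extra support points of weight $\tau$ and renormalizing), and your non-collinearity argument is the same idea as the paper's, just phrased more loosely.

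There is one small gap in your case~1 treatment of the constraint $k^{i}_{Z}=k^{i}_{W}$. You propose a $2^{i}$-fold dyadic refinement in the atomless coordinates $Z_{1},W_{1}$ together with an independent refinement in $Z_{2},W_{2}$; but then $k^{i}_{Z}=2^{i}\cdot(\text{number of }Z_{2}\text{-cells})$ and $k^{i}_{W}=2^{i}\cdot(\text{number of }W_{2}\text{-cells})$, and nothing you wrote guarantees these second factors match. The paper fixes this by observing that, since $\mu_{Z_{1}}$ and $\mu_{W_{1}}$ are atomless, one can always split a single $Z_{1}$- or $W_{1}$-cell into two pieces of positive measure and thereby increment $k^{i}_{Z_{1}}$ or $k^{i}_{W_{1}}$ by one, repeating until the totals agree. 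Adding that one-line splitting argument to your construction closes the gap.
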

\begin{proof}[Proof of Lemma \ref{lemma:approximating_density}]
 Take $P\in\mathcal{P}(\mu)$. We will prove the main part of the lemma first and then handle the final statements regarding the sets $S^{Y,i}_{h}$ and  variables with finite range.
    
    We will split the proof according to whether item 1 or 2 of Condition \ref{cond:product_measure} holds.

    Assume first that part 1 holds. Using arguments similar to those of Lemma A.2 of \cite{Canay2013} we can show that there exist
    \begin{enumerate}
        \item six collections of pairwise disjoint Borel sets 
        \begin{align*}
            &  \lbrace S^{Y,i}_{h}: h\in \lbrace 1,\dots,k^{i}_{Y}  \rbrace\rbrace \lbrace S^{Z_{1},i}_{l_{1}}: l_{1}\in \lbrace 1,\dots,k_{Z_{1}}^{i}  \rbrace\rbrace,
            \lbrace S^{Z_{2},i}_{l_{2}}: l_{2}\in \lbrace 1,\dots,k_{Z_{2}}^{i}  \rbrace\rbrace 
            \\
            &\lbrace S^{W_{1},i}_{j_{1}}: j_{1}\in \lbrace 1,\dots, k_{W_{1}}^{i} \rbrace \rbrace,
            \lbrace S^{W_{2},i}_{j_{2}}: j_{2}\in \lbrace 1,\dots, k_{W_{2}}^{i} \rbrace \rbrace, \lbrace S^{X,i}_{m}: m\in \lbrace 1,\dots, k^{i}_{X} \rbrace \rbrace
        \end{align*} 
        such that $0<\mu_{Y}(S^{Y,i}_{h})<\infty$ for all $h\in \lbrace 1,\dots, k^{i}_{Y} \rbrace$, $0<\mu_{Z_{1}}(S^{Z_{1},i}_{l_{1}})<\infty$ for all $l_{1}\in \lbrace 1,\dots, k_{Z_{1}}^{i} \rbrace$,$0<\mu_{Z_{2}}(S^{Z_{2},i}_{l_{2}})<\infty$ for all $l_{2}\in \lbrace 1,\dots, k_{Z_{2}}^{i} \rbrace$,
        $0<\mu_{W_{1}}(S^{W_{1},i}_{j_{1}})<\infty$ for all $j_{1}\in \lbrace 1,\dots, k_{W_{1}}^{i} \rbrace$,$0<\mu_{W_{2}}(S^{W_{2},i}_{j_{2}})<\infty$ for all $j_{2}\in \lbrace 1,\dots, k_{W_{2}}^{i} \rbrace$
        $0<\mu_{X}(S^{X,i}_{m})<\infty$ for all $m\in \lbrace 1,\dots, k^{i}_{X} \rbrace$   
        and
        \item positive numbers $\pi^{i}_{h,l_{1},l_{2},j_{1},j_{2},m}$
        \end{enumerate}
        such that the function
        \begin{align*}
            &\widetilde{f}^{i}(y,z_{1},z_{2},w_{1},w_{2},x)=
            \\
            &\sum\limits_{h,l_{1},l_{2},j_{1},j_{2},m} \pi^{i}_{h,l_{1},l_{2},j_{1},j_{2},m} I\lbrace y\in S^{Y,i}_{h}\rbrace I\lbrace w_{1}\in S^{W_{1},i}_{j_{1}}\rbrace I\lbrace w_{2}\in S^{W_{2},i}_{j_{2}}\rbrace I\lbrace z_{1}\in S^{Z_{1},i}_{l_{1}}\rbrace I\lbrace z_{2}\in S^{Z_{2},i}_{l_{2}}\rbrace I\lbrace x\in S^{X,i}_{m}\rbrace,
        \end{align*}
        satisfies that it is non-negative, integrates to 1 under $\mu$, and 
        \begin{align*}
            &\widetilde{f}^{i} \convlp f_{P} \text{ when } i\to\infty.
        \end{align*}
        Since convergence in $L^{1}(\mu)$ implies convergence in measure under $\mu$, and every sequence converging in measure has a subsequence that converges almost everywhere, passing to a subsequence if needed we can assume that 
        \begin{align*}
            % &\widetilde{f}^{i}_{Y} \overset{\text{a.e. }\mu}{\to} f_{P^\ast, Y},
            % \\
            &\widetilde{f}^{i} \overset{\text{a.e. }\mu}{\to} f_{P}.
        \end{align*} 
        Lemma A.2 of \cite{Canay2013} provides an approximating density like the former, but only for dominating measures that are a product of three $\sigma$-finite Borel measures. It is straightforward to adapt the proof to our case. We ommit the details. 
    
    The sequence of functions $(\widetilde{f}^{i})_{i\geq 1}$ satisfies all the requirements in Lemma \ref{lemma:approximating_density}, except possibly for the fact that Lemma \ref{lemma:approximating_density} requires that $k^{i}_{Z}$, the number of terms in the summation corresponding to $(l_1,l_2)$  is equal to $k^{i}_{W}$, the number of terms in the summation corresponding to $(j_1,j_2)$. Note that $k^{i}_{Z}=k^{i}_{Z_{1}}\times k^{i}_{Z_{2}}$ and $k^{i}_{W}=k^{i}_{W_{1}}\times k^{i}_{W_{2}}$. Next, we will show that we can always assume that $k^{i}_{Z_{1}}\times k^{i}_{Z_{2}} =  k^{i}_{W_{1}}\times k^{i}_{W_{2}}$. To do so, it suffices to show that we can always increase both $k^{i}_{Z_{1}}$ and $k^{i}_{W_{1}}$ by one, by splitting the sets $S^{Z_{1},i}_{l_{1}}$ and $S^{W_{1},i}_{j_{1}}$ into smaller pieces that still have positive $\mu$ measure.

    We will show that we can increase $k^{i}_{Z_{1}}$ by one, the proof for $k^{i}_{W_{1}}$  is entirely analogous. Since $\mu_{Z_{1}}$ is atomless by assumption, by Corollary 1.12.10 of \cite{Bogachev}, there exist disjoint Borel sets $C^{i,1}
    $ and $C^{i,2}$ such that 
    \begin{itemize}
        \item $\mu_{Z_{1}}\left(C^{i,t}\right) >0$ for $t=1,2$,
        \item $S^{Z_{1},i}_{k^{i}_{Z_{1}}}=C^{i,1} \cup C^{i,2}$, except for a set of $\mu$ measure zero.
    \end{itemize}
    Define $S_{k^{i}_{Z_{1}}}^{Z_{1},i,\ast}=C^{i,1}$ and $S_{k^{i}_{Z_{1}}+1}^{Z_{1},i,\ast}=C^{i,2}$. For $l_{1}\in\lbrace 1,\dots, k^{i}_{Z_{1}}-1\rbrace$, let $S_{l_{1}}^{Z_{1},i,\ast}=S^{Z_{1},i}_{l_{1}}$. 
    Also define $\pi_{h,k^{i}_{Z_{1}}+1,l_{2},j_{1},j_{2},m}= \pi_{h,k^{i}_{Z_{1}},l_{2},j_{1},j_{2},m}$  
    for all $h,l_{2},j_{1},j_{2},m$.
    Then $\widetilde{f}^{i} $ is equal almost everywhere under $\mu$ to
    \begin{equation}
        \sum\limits_{l_{1}=1}^{k^{i}_{Z_{1}}+1} \sum\limits_{h,l_{2},j_{1},j_{2},m} \pi_{h,l_{1},l_{2},j_{1},j_{2},m} I\lbrace y\in S^{Y,i}_{h}\rbrace I\lbrace w_{1}\in S^{W_{1},i}_{j_{1}}\rbrace I\lbrace w_{2}\in S^{W_{2},i}_{j_{2}}\rbrace I\lbrace z_{1}\in S^{Z_{1},i,*}_{l_{1}}\rbrace I\lbrace z_{2}\in S^{Z_{2},i}_{l_{2}}\rbrace I\lbrace x\in S^{X,i}_{m}\rbrace,
        \nonumber
    \end{equation}
    which has a number of terms in the summations corresponding to $Z_{1}$ equal to $k^{i}_{Z_{1}}+1$. 
    This finishes the proof of the main part of the lemma in the case in which part 1 of condition \ref{cond:product_measure} holds.

    Now assume that part 2 of condition \ref{cond:product_measure} holds. Arguing as before we can build a sequence of functions of the form 
    \begin{align*}
        &\widetilde{f}^{i}(y,z,w,x)=
    \sum\limits_{h,l,j,m} \pi^{i}_{h,l,j,m} I\lbrace y\in S^{Y,i}_{h}\rbrace I\lbrace w\in S^{W,i}_{j}\rbrace I\lbrace z\in S^{Z,i}_{l}\rbrace I\lbrace x\in S^{X,i}_{m}\rbrace,
    \end{align*}
    which satisfies that it is non-negative, integrates to 1 under $\mu$, and 
    \begin{align*}
        &\widetilde{f}^{i} \convlp f_{P}.
    \end{align*} 
    Since $Z$ and $W$ are discrete and their supports have equal cardinality, we can always express $\widetilde{f}^{i}$ in the form
    \begin{align*}
        &\widetilde{f}^{i}(y,z,w,x)=
    \sum\limits_{h,m} \sum\limits_{l=1}^{k^{i}_{Z}} \sum\limits_{j=1}^{k^{i}_{W}}\pi^{i}_{h,l,j,m} I\lbrace y\in S^{Y,i}_{h}\rbrace I\lbrace w=w_{j}\rbrace I\lbrace z=z_{l}\rbrace I\lbrace x\in S^{X,i}_{m}\rbrace,
    \end{align*}
    where $\lbrace w_{1},\dots,w_{k^{i}_{W}}\rbrace$ is a subset of the support of $W$ and likewise $\lbrace z_{1},\dots,z_{k^{i}_{Z}}\rbrace$ is a subset of the support of $Z$.

     If $k^{i}_{W}=k^{i}_{Z}$ then $\widetilde{f}^{i}$ already satisfies the requirements in Lemma \ref{lemma:approximating_density}. Otherwise, suppose that $k^{i}_{W}<k^{i}_{Z}$. The case $k^{i}_{W}>k^{i}_{Z}$ is handled similarly. 
    Since by assumption the supports of $Z$ and $W$ have the same cardinality, there exist $w_{j}$ for $j=k^{i}_{W} +1,\dots, k^{i}_{Z}$ that are in the support of $W$. 
    Let $\tau>0$. For any $h,l,m$ and $j=k^{i}_{W} +1,\dots, k^{i}_{Z}$ let
    $\widehat{\pi}^{i}_{h,l,j,m}=\tau$ and for $j=1,\dots, k^{i}_{W}$ let  $\widehat{\pi}^{i}_{h,l,j,m}={\pi}^{i}_{h,l,j,m}$. Let
    \[
        J = \sum\limits_{h,m} \sum\limits_{l=1}^{k^{i}_{Z}} \sum\limits_{j=1}^{k^{i}_{Z}} \widehat{\pi}_{h,l,j,m} \mu_{Y}(S^{i}_{Y,h}) \mu_{X}(S^{i}_{X,m}) .     
    \]
    Define 
    \begin{align*}
        \widehat{f}^{i}(y,z,w,x) = 
        (1/J) \sum\limits_{h,m} \sum\limits_{l=1}^{k^{i}_{Z}} \sum\limits_{j=1}^{k^{i}_{Z}} \widehat{\pi}^{i}_{h,l,j,m} I\lbrace y\in S^{Y,i}_{h}\rbrace I\lbrace w=w_{j}\rbrace I\lbrace z=z_{l}\rbrace I\lbrace x\in S^{X,i}_{m}\rbrace.
    \end{align*}
    Note that $\widehat{f}^{i}$ satisfies that $\widehat{f}^{i}\geq 0$, $\int \widehat{f}^{i} d\mu =1$, and that $\widehat{\pi}^{i}_{h,l,j,m}/J>0$ for all $h,l,j,m$. Moreover, there are an equal number of terms corresponding to indices $j$ and to $l$ in the summation that defines $\widehat{f}^{i}(y,z,w,x)$.
    For each $i$ take $\tau$ small enough so that $\Vert \widehat{f}^{i} - \widetilde{f}^{i}\Vert_{L^{1}(\mu)}< 1/i$. Then $\widehat{f}^{i} \convlp f_{P}$ as $i\to\infty$. Passing to a subsequence, we have that $\widehat{f}^{i} \convas f_{P}$ as $i\to\infty$. 
    This finishes the proof of the main part lemma for the case in which part 2 of condition \ref{cond:product_measure} holds.

    Next, we show that we can choose the sets $S^{Y,i}_{h}$ such that the vector with coordinates $\int_{S^{Y,i}_{h}} y \: d\mu_{Y}$, $h=1,\dots,k^{i}_{Y}$ is not collinear with the vector of coordinates $\mu_{Y}(S^{Y,i}_{h})$, $h=1,\dots,k^{i}_{Y}$ and that we can assume that $k^{i}_{Y}\geq 2$. If $\mu_{Y}$ is the counting measure on a finite set with two or more elements, the statement holds trivially. Assume then that $\mu_{Y}$ is an atomless measure. Suppose that there exists $c\in\mathbb{R}$ such that for all $h$ it holds that $\int_{S^{Y,i}_{h}} y \: d\mu_{Y}= c \times \mu_{Y}(S^{Y,i}_{h})$. Take any $h$ and define 
    \begin{align*}
    &S_{h}^{Y,i,+} = S^{Y,i}_{h} \cap \lbrace y > c\rbrace
    \\
    &S_{h}^{Y,i,-} = S^{Y,i}_{h} \cap \lbrace y \leq c\rbrace.
    \end{align*}
    Since $\mu_{Y}$ is atomless, it does not  have an atom at $c$. Then, $S^{Y,i}_{h} = S_{h}^{Y,i,+} \cup S_{h}^{Y,i,-}$, and both $S_{h}^{Y,i,+}$ and $S_{h}^{Y,i,-}$ have positive measure, because $\int_{S^{Y,i}_{h}} y \: d\mu_{Y}= c \times \mu_{Y}(S^{Y,i}_{h})$. Moreover,  
    $$
        \frac{1}{\mu_{Y}(S_{h}^{Y,i,+})}\int_{S^{Y,i,+}_{h}} y \: d\mu_{Y} >c 
    $$
    and
    $$
        \frac{1}{\mu_{Y}(S_{h}^{Y,i,-})}\int_{S^{Y,i,-}_{h}} y \: d\mu_{Y} <c. 
    $$
    Thus, by splitting the set $S_{h}^{Y,i}$ like we just described, we see that we can assume that the vector with coordinates $\int_{S^{Y,i}_{h}} y \: d\mu_{Y}$, $h=1,\dots,k^{i}_{Y}$ is not collinear with the vector of coordinates $\mu_{Y}(S^{Y,i}_{h})$, $h=1,\dots,k^{i}_{Y}$. To prove that we can assume that $k^{i}_{Y}\geq 2$ we proceed similarly, splitting the sets 
    $S^{Y,i}_{h}$ into smaller pieces if needed.

        To prove the last statement of the lemma, regarding variables with finite range, we can use arguments similar to the ones we used to show that we can assume that $k^{i}_{Z}=k^{i}_{W}$ when $Z$ and $W$ are discrete. We ommit the details.

\end{proof}

\begin{lemma}\label{lemma:positive_cond_var}
    Assume Condition \ref{cond:alpha_convergence_1} holds. Let $P\in\mathcal{P}(\mu)$ be a law satisfying \eqref{eq:cont}. Assume that $(P_{i})_{i\geq 1}$ is a sequence of laws in $\mathcal{P}(\mu)$ that satisfy \eqref{eq:cont} and such that $f_{P_{i}}\convas f_{P}$ as $i\to\infty$. Assume that $var_{P}\left\lbrace \alpha_{P}(W,X)\mid X\right\rbrace>0$ with positive probability under $P$. 
    Then, there exists $i_{0}$ such that for all $i\geq i_{0}$, $var_{P_{i}}\left\lbrace \alpha_{P_{i}}(W,X)\mid X\right\rbrace>0$ with positive probability under $P_{i}$.
\end{lemma}
\begin{proof}
    Assume, for the sake of contradiction, that there exists a subsequence $(P_{i_{q}})_{q\geq 1}$ such that 
    $$
    var_{P_{i_{q}}}\left\lbrace \alpha_{P_{i_{q}}}(W,X)\mid X\right\rbrace=0
    $$ 
    almost surely under $P_{i_{q}}$ for all $q$. Since 
    by assumption $f_{P_{i}}\convas f_{P}$ as $i\to\infty$, by Scheffe's lemma we have that $P_{i_{q}}$ converges to $P$ in total variation, that is
    $$
    \Vert P_{i_{q}}-P\Vert_{TV} \to 0.
    $$
    Eventually passing to a subsequence if needed, we can assume that 
    $$
    \Vert P_{i_{q}}-P\Vert_{TV} < 1/q^{2},
    $$
    for all $q$.
    This implies that 
    $$
    P\left[
    var_{P_{i_{q}}}\left\lbrace \alpha_{P_{i_{q}}}(W,X)\mid X\right\rbrace=0
    \right] \geq P_{i_{q}}\left[
    var_{P_{i_{q}}}\left\lbrace \alpha_{P_{i_{q}}}(W,X)\mid X\right\rbrace=0
    \right] - 1/q^{2} = 1 - 1/q^{2}.
    $$
    Thus
    $$
    P\left[
    var_{P_{i_{q}}}\left\lbrace \alpha_{P_{i_{q}}}(W,X)\mid X\right\rbrace\neq 0
    \right] \leq 1/q^{2},
    $$
    for all $q$.
    Let $K_{1}$ be the event in which there exists $q_{0}$ such that for all $q\geq q_{0}$, $var_{P_{i_{q}}}\left\lbrace \alpha_{P_{i_{q}}}(W,X)\mid X\right\rbrace=0$.
    By the Borel-Cantelli lemma, $P(K_{1})=1$.

    Now, by Condition \ref{cond:alpha_convergence_1}, eventually passing to a subsequence if needed, we can assume that $\alpha_{P_{i_{q}}}(W,X)$ converges to $\alpha_{P}(W,X)$ almost surely under $P$. Then, letting $K_{2}$ be the event in which $\alpha_{P_{i_{q}}}(W,X)\to \alpha_{P}(W,X)$, we have that $P(K_{2})=1$.
    
    In the event $K_{1}\cap K_{2}$, we have that, for all sufficiently large $q$, $var_{P_{i_{q}}}\left\lbrace \alpha_{P_{i_{q}}}(W,X)\mid X\right\rbrace=0$. This means that $\alpha_{P_{i_{q}}}(W,X)$ is equal to a function solely of $X$. But in the event $K_{1}\cap K_{2}$ we also have that $\alpha_{P_{i_{q}}}(W,X)\to \alpha_{P}(W,X)$. Thus, it has to be that $\alpha_{P}(W,X)$ is equal to a function solely of $X$. Since $P(K_{1}\cap K_{2})=1$, we have that $var_{P}\left\lbrace \alpha_{P}(W,X)\mid X\right\rbrace=0$ almost surely under $P$. This contradicts the assumption that $var_{P}\left\lbrace \alpha_{P}(W,X)\mid X\right\rbrace>0$ with positive probability under $P$.
\end{proof}

\begin{lemma}\label{lemma:conv_alpha_params}
   Assume Condition \ref{cond:alpha_convergence_1} holds.
    Let $P$ and $(P_{i})_{i\geq 1}$ be laws in $\mathcal{P}(\mu)$ that satisfy \eqref{eq:cont} and that have densities that are rectangular non-negative simple functions. 
    Assume that $f_{P_{i}}\convas f_{P}$ as $i\to\infty$,
    that 
    $\alpha_{P}$ coincides $P$-almost surely with a function of the form
    \begin{equation}
        (w,x)\mapsto \sum\limits_{j=1}^{k_{W}}\sum\limits_{m=1}^{k_{X}} \alpha_{j,m}  I\lbrace (w,x)\in (S^{W}_{j}\times S^{X}_{m})\rbrace,
        \nonumber
    \end{equation}
    and that $\alpha_{P_{i}}$ coincides $P_{i}$-almost surely with a function of the form
    \begin{equation}
        (w,x)\mapsto \sum\limits_{j=1}^{k_{W}}\sum\limits_{m=1}^{k_{X}} \alpha^{i}_{j,m}  I\lbrace (w,x)\in (S^{W}_{j}\times S^{X}_{m})\rbrace.
        \nonumber
    \end{equation}
    Then $\alpha^{i}_{j,m} \to \alpha_{j,m}$ for all $j,m$ when $i\to\infty$.
\end{lemma}
\begin{proof}
    Let $E_{i}$ be the event in which
    $\alpha_{P_{i}}$ coincides with a function of the form
    \begin{equation}
        (w,x)\mapsto \sum\limits_{j=1}^{k_{W}}\sum\limits_{m=1}^{k_{X}} \alpha^{i}_{j,m}  I\lbrace (w,x)\in (S^{W}_{j}\times S^{X}_{m})\rbrace.
        \nonumber
    \end{equation}
    By assumption $P_{i}(E_{i})=1$ for all $i$.

    Since 
    we are assuming that $f_{P_{i}}\convas f_{P}$ as $i\to\infty$, by Scheffe's lemma we have that $P_{i}$ converges to $P$ in total variation, that is
    $$
    \Vert P_{i}-P\Vert_{TV} \to 0.
    $$
    Eventually passing to a subsequence if needed, we can assume that 
    $$
    \Vert P_{i}-P\Vert_{TV} < 1/i^{2},
    $$
    for all $i$.
    This implies that 
    $$
    P\left[ E_{i}
    \right] \geq P_{i}\left[
    E_{i}
    \right] - 1/i^{2} = 1 - 1/i^{2}.
    $$
    Thus
    $$
    P\left[
    E^{c}_{i}
    \right] < 1/i^{2},
    $$
    for all $i$.
    Let $K_{1}$ be the event in which there exists $i_{0}$ such that for all $i\geq i_{0}$, $E_{i}$ holds.
    By the Borel-Cantelli lemma, $P(K_{1})=1$.
    
    Let $K_{2}$ be the event in which $\alpha_{P}$ coincides with a function of the form
    \begin{equation}
        (w,x)\mapsto \sum\limits_{j=1}^{k_{W}}\sum\limits_{m=1}^{k_{X}} \alpha_{j,m}  I\lbrace (w,x)\in (S^{W}_{j}\times S^{X}_{m})\rbrace.
        \nonumber
    \end{equation}
    By assumption $P(K_{2})=1$.
    
    Note that
    \begin{align*}
        &P\left\lbrace \alpha_{P_{i}}(W,X) \to \alpha_{P}(W,X) \right\rbrace =
        \\
        &\sum\limits_{j=1}^{k_{W}}\sum\limits_{m=1}^{k_{X}} P\left[ \left\lbrace \alpha_{P_{i}}(W,X) \to \alpha_{P}(W,X)\right\rbrace \cap (K_{1}\cap K_{2}) \mid (W,X) \in  (S^{W}_{j}\times S^{X}_{m})\right] P\left\lbrace (W,X) \in  (S^{W}_{j}\times S^{X}_{m})\right\rbrace=
        \\
        &\sum\limits_{j=1}^{k_{W}}\sum\limits_{m=1}^{k_{X}} I\left\lbrace \alpha^{i}_{j,m} \to \alpha_{j,m} \right\rbrace P\left\lbrace (W,X) \in  (S^{W}_{j}\times S^{X}_{m})\right\rbrace,
    \end{align*}
    where we used the fact that $P(K_{1}\cap K_{2})=1$, and that in the event $K_{1}\cap K_{2}\cap \lbrace (W,X)\in S^{W}_{j}\times S^{X}_{m}\rbrace$ it holds that $\alpha_{P}(W,X)=\alpha_{j,m}$ and $\alpha_{P_{i}}(W,X)=\alpha^{i}_{j,m}$ for all sufficiently large $i$.
    Since $P\left\lbrace \alpha_{P_{i}}(W,X) \to \alpha_{P}(W,X) \right\rbrace=1$ by assumption,   $P\left\lbrace (W,X) \in  (S^{W}_{j}\times S^{X}_{m})\right\rbrace$ is positive for all $j,m$ and 
    $$
    \sum\limits_{j=1}^{k_{W}}\sum\limits_{m=1}^{k_{X}}P\left\lbrace (W,X) \in  (S^{W}_{j}\times S^{X}_{m})\right\rbrace=1
    $$ 
    we have that $I\left\lbrace \alpha^{i}_{j,m} \to \alpha_{j,m} \right\rbrace=1$ for all $j,m$. This implies that $\alpha^{i}_{j,m} \to \alpha_{j,m}$ for all $j,m$ when $i\to\infty$, which is what we wanted to show. 
\end{proof}

\begin{lemma}\label{lemma:intermediate_value}
    Let $\psi(\gamma,\eta)$ be a continuous function defined over $\mathbb{R}\times\left(\mathbb{R} \setminus \lbrace 0 \rbrace \right) $ that satisfies:
    \begin{enumerate}
        % \item For all $\gamma\in \mathbb{R}$, $\psi(\gamma,\eta)$ converges when $\eta\to 0$
        \item For all $\zeta\in \mathbb{R}$ there exists $\gamma(\zeta)$ such that $\psi(\gamma(\zeta),\eta)\to \zeta$ when $\eta\to 0$.
        \item $\gamma(\zeta)$ is increasing in $\zeta$.
    \end{enumerate}
    Then for all $\zeta \in \mathbb{R}$ there exists a sequence $\lbrace(\gamma_{t},\eta_{t})\rbrace_{t\geq 1}$ such that $(\gamma_{t})_{t\geq 1}$ is bounded, $\eta_{t} \to 0$ when $t\to \infty$ and $\psi(\gamma_{t},\eta_{t})=\zeta$ for all $t$.
\end{lemma}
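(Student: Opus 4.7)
The plan is to apply the intermediate value theorem to the continuous slice $\gamma \mapsto \psi(\gamma,\eta)$ for each small $\eta$, using hypotheses 1 and 2 to sandwich the value $\zeta$ between two convenient anchor points.

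Fix $\zeta \in \mathbb{R}$ and choose $\zeta_{-}<\zeta<\zeta_{+}$. Set $\gamma_{-}:=\gamma(\zeta_{-})$ and $\gamma_{+}:=\gamma(\zeta_{+})$. By hypothesis 2, $\gamma_{-}\leq \gamma_{+}$. I first check that this inequality is strict. Indeed, if $\gamma_{-}=\gamma_{+}$, then the function $\eta \mapsto \psi(\gamma_{-},\eta)=\psi(\gamma_{+},\eta)$ would have two distinct limits $\zeta_{-}$ and $\zeta_{+}$ as $\eta \to 0$ by hypothesis 1, which is impossible. Hence $\gamma_{-}<\gamma_{+}$.

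By hypothesis 1, $\psi(\gamma_{-},\eta)\to \zeta_{-}$ and $\psi(\gamma_{+},\eta)\to \zeta_{+}$ as $\eta \to 0$. Since $\zeta_{-}<\zeta<\zeta_{+}$, there exists $\eta_{0}>0$ such that for every $\eta$ with $0<|\eta|<\eta_{0}$,
\[
\psi(\gamma_{-},\eta)<\zeta<\psi(\gamma_{+},\eta).
\]
By continuity of $\psi$ on $\mathbb{R}\times (\mathbb{R}\setminus\{0\})$, the map $\gamma \mapsto \psi(\gamma,\eta)$ is continuous on the compact interval $[\gamma_{-},\gamma_{+}]$, so the intermediate value theorem yields some $\gamma_{\eta}\in [\gamma_{-},\gamma_{+}]$ with $\psi(\gamma_{\eta},\eta)=\zeta$.

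To conclude, pick any sequence $\eta_{t}\to 0$ with $0<|\eta_{t}|<\eta_{0}$ and define $\gamma_{t}:=\gamma_{\eta_{t}}$. Then $\psi(\gamma_{t},\eta_{t})=\zeta$ for every $t$, and $(\gamma_{t})_{t\geq 1}\subset [\gamma_{-},\gamma_{+}]$ is bounded, as required. There is no real obstacle in this argument; the only point requiring care is the strict monotonicity step, which is forced by the uniqueness of the limits in hypothesis 1. Everything else is a direct intermediate value argument.
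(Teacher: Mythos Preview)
Your proof is correct and follows essentially the same intermediate value argument as the paper: both pick anchor points $\gamma(\zeta_{-})$ and $\gamma(\zeta_{+})$ (the paper uses $\zeta\pm\delta$), use hypothesis~1 to get the sandwich $\psi(\gamma_{-},\eta)<\zeta<\psi(\gamma_{+},\eta)$ for small $\eta$, and then apply the intermediate value theorem slice by slice. Your additional check that $\gamma_{-}<\gamma_{+}$ via uniqueness of limits is a nice point of care that the paper leaves implicit.
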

\begin{proof}[Proof of Lemma \ref{lemma:intermediate_value}]
    Take $\zeta\in \mathbb{R}$. For each $\eta \neq 0$ define the function $\Psi_{\eta}$ by
    $$
    \Psi_{\eta}(\gamma^{\prime})= \psi(\gamma^{\prime},\eta) - \zeta.
    $$
    Then $\Psi_{\eta}$ is continuous over $\mathbb{R}$ for every $\eta\neq 0$. 
    Fix $\delta>0$ and take any sequence $(\eta_{t})_{t\geq 1}$ converging to zero as $t\to\infty$. When $t\to\infty$,
    \begin{align*}
        &\Psi_{\eta_{t}}(\gamma(\zeta - \delta))=\psi\lbrace\gamma(\zeta - \delta), \eta_{t} \rbrace -\zeta  \to  -\delta,
        \\
        &\Psi_{\eta_{t}}(\gamma(\zeta + \delta))=\psi\lbrace\gamma(\zeta + \delta), \eta_{t} \rbrace -\zeta \to \delta.
    \end{align*}
    Thus, by the intermediate value theorem, for all sufficiently large $t$ there exists $\gamma_{t}\in(\gamma(\zeta- \delta) , \gamma(\zeta+ \delta) )$ such that $\Psi_{\eta_{t}}(\gamma_{t})=0$, that is, $\psi(\gamma_{t},\eta_{t}) = \zeta$, which is what we wanted to show.
\end{proof}

\begin{lemma}\label{lemma:levy}
    For $n\in\mathbb{N}$, let $B_{n,P}:=b_{n,P}(O_{1},\dots, O_{n})$, where $b_{n,P}$ is a measurable function that could depend on $P$, the law of $O$. Let $Q$ be a fixed Borel measure over $\mathbb{R}$. 
    For each $n$, let $P_{n}$ be the law of $B_{n,P}$ when $O$ has law $P$.
    Assume that
    $$  
        \limsup_{n}\sup_{P\in\mathcal{M}} d_{LP}(P_{n},Q) =0.
    $$
    Then, for every $\delta>0$ there exists $L>0$ such that
    $$
        \limsup_{n} \sup_{P\in\mathcal{M}} P\left(\vert B_{n,P}\vert > L \right) \leq \delta.
    $$
\end{lemma}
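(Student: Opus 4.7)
The plan is to convert the uniform Lévy-Prokhorov convergence into a uniform tail bound by combining tightness of the limit measure $Q$ with a direct application of the defining inequality of $d_{LP}$. The core observation is that if we take the tail set $B=\{x:|x|>L\}$, its $\varepsilon$-enlargement is precisely $B^{\varepsilon}=\{x:|x|>L-\varepsilon\}$, so the bound $P_n(B)\leq Q(B^{\varepsilon})+\varepsilon$ immediately translates a $Q$-tail bound into a $P_n$-tail bound, up to an additive slack of $\varepsilon$.

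Concretely, I would fix $\delta>0$ and set $\varepsilon=\delta/2$. Since in the application $Q=\Phi$ is a probability measure on $\mathbb{R}$ (and, more generally, any finite Borel measure on $\mathbb{R}$ is tight), I can choose $L>0$ such that $Q(\{x:|x|>L-\varepsilon\})<\delta/2$. From the defining formula $B^{\varepsilon}=\{t:\exists s\in B\text{ with }|s-t|<\varepsilon\}$ one checks directly that $B^{\varepsilon}=\{x:|x|>L-\varepsilon\}$ for $B=\{|x|>L\}$. The hypothesis $\limsup_{n}\sup_{P\in\mathcal{M}}d_{LP}(P_n,Q)=0$ then yields some $n_0$ such that $d_{LP}(P_n,Q)<\varepsilon$ for every $n\geq n_0$ and every $P\in\mathcal{M}$. (One uses here the mild fact that the set $\{\varepsilon'>0:P_1(B)\leq P_2(B^{\varepsilon'})+\varepsilon'\text{ for all Borel }B\}$ is closed upward, so that strict inequality $d_{LP}<\varepsilon$ is enough to use $\varepsilon$ itself in the defining bound.) Applying that bound with $P_1=P_n$, $P_2=Q$ and $B=\{|x|>L\}$ gives, for all $n\geq n_0$ and all $P\in\mathcal{M}$,
$$P(|B_{n,P}|>L)=P_n(B)\leq Q(B^{\varepsilon})+\varepsilon<\tfrac{\delta}{2}+\tfrac{\delta}{2}=\delta.$$
Taking the supremum over $\mathcal{M}$ and then $\limsup_n$ gives the claim.

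I do not expect any step here to be a real obstacle; the whole proof is essentially a one-line calculation once the correct set $B$ is identified. The only points meriting care are (i) identifying the $\varepsilon$-enlargement of the tail set correctly, and (ii) noticing that the one-sided form of the Lévy-Prokhorov inequality stated in the paper, $P_1(B)\leq P_2(B^{\varepsilon})+\varepsilon$, points in exactly the direction needed to dominate $P_n$ by $Q$, so no symmetrization of $d_{LP}$ is required.
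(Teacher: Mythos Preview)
Your proposal is correct and follows essentially the same approach as the paper's proof: choose $L$ by tightness of $Q$, use the uniform $d_{LP}$ convergence to obtain a threshold $n_0$, and apply the defining inequality of $d_{LP}$ to the tail set $\{|x|>L\}$ via the identification $\{|x|>L\}^{\varepsilon}=\{|x|>L-\varepsilon\}$. Your bookkeeping is arguably cleaner, since you apply the one-sided inequality $P_n(B)\leq Q(B^{\varepsilon})+\varepsilon$ exactly as stated in the paper's definition of $d_{LP}$ and land directly on the threshold $L$, whereas the paper's write-up bounds $P_n(B^{\varepsilon_1})$ by $Q(B)+\varepsilon_1$ (implicitly invoking the symmetry of the Lévy--Prokhorov metric) and ends with the threshold $L-\varepsilon_1$; but this is a cosmetic difference, not a different argument.
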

\begin{proof}[Proof of Lemma \ref{lemma:levy}]
    Take $L>0$ such that $Q\left\lbrace \mathbb{R}\setminus (-L,L)\right\rbrace < \delta/{2}$. Let $n_{0}$ be such that for all $n\geq n_{0}$ it holds that 
    $\sup_{P\in\mathcal{M}} d_{LP}(P_{n},Q) <\delta/2$. Let $\varepsilon_{1}>0$ be such that $\varepsilon_{1}<\delta/{2}$, $\varepsilon_{1}<L$ and for all $n\geq n_{0}$ and $P\in\mathcal{M}$
    $$
        P_{n} \left[ \left\lbrace \mathbb{R}\setminus (-L,L)\right\rbrace^{\varepsilon_{1}}\right] \leq Q\left\lbrace \mathbb{R}\setminus (-L,L)\right\rbrace  + \varepsilon_{1}.
    $$
    Since 
    $$
    P_{n} \left[ \left\lbrace \mathbb{R}\setminus (-L,L)\right\rbrace^{\varepsilon_{1}}\right]= P_{n} \left\lbrace \mathbb{R}\setminus [-L+\varepsilon_{1},L-\varepsilon_{1}]\right\rbrace=P\left(\vert B_{n,P}\vert > L-\varepsilon_{1}\right)
    $$
    we conclude that for all $n\geq n_{0}$  and $P\in\mathcal{M}$
    $$
    P\left(\vert B_{n,P}\vert > L-\varepsilon_{1}\right) \leq \delta/2 + \varepsilon_{1} \leq \delta.
    $$
    This finishes the proof of the lemma.
\end{proof}

\section{Validity of Conditions \ref{cond:alpha_coheres} and \ref{cond:alpha_convergence_1} for the examples}
In this section, we verify that the conditions needed to prove our results hold for the examples listed in Section \ref{sec:model}. 

\begin{proposition}\label{prop:examples}
    Conditions \ref{cond:alpha_coheres} and \ref{cond:alpha_convergence_1} hold for the examples listed in Section \ref{sec:model}.
\end{proposition}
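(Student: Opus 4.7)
The plan is to verify the three conditions for all four examples by first recording, in each case, a closed-form expression for $\alpha_P$ as a rational function of the marginals of $f_P$ (and of the simple weight $\omega$ in Example \ref{ex:np_IV}). The formulas follow from writing $E_P\{m(O,g)\}=E_P\{\alpha_P(W,X)g(W,X)\}$ and identifying coefficients: for Examples \ref{ex:ATE_proximal} and \ref{ex:ATE_IV} this yields a signed inverse of a conditional density of the relevant binary coordinate, for Example \ref{ex:np_IV} it yields $\omega/f_{P,W}$, and Example \ref{ex:LATE} is a special case of Example \ref{ex:ATE_IV} with no covariates.

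Conditions \ref{cond:alpha_coheres} and \ref{cond:alpha_convergence_2} would follow by direct manipulation of these formulas. When $f_P$ is a rectangular non-negative simple function, every marginal appearing in $\alpha_P$ is itself a rectangular simple function whose support cells are products of the corresponding coordinate support sets and whose values are strictly positive (by positivity of the $\pi_{h,l,j,m}$ and of the $\mu$-measures). Hence $\alpha_P$ inherits a simple form with cells $S^W_j \times S^X_m$, and its boundedness on a set of finite $\mu$-measure together with Cauchy--Schwarz yields the continuity and linearity of $E_P\{m(O,\cdot)\}$ on $L^2(P_{(W,X)})$, completing Condition \ref{cond:alpha_coheres}. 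A small subtlety in Examples \ref{ex:ATE_proximal} and \ref{ex:ATE_IV}, where a binary coordinate lies inside $X$ (respectively, equals $W$), is handled by noting that under Condition \ref{cond:product_measure} the rectangular support sets automatically respect the two values of that coordinate, since the relevant marginal of $\mu$ is a counting measure. For Condition \ref{cond:alpha_convergence_2}, the support sets of $P_i$ coincide with those of $P$, so the cell values of the marginals of $f_{P_i}$ are linear combinations of the cell values of $f_{P_i}$ and converge to the corresponding cell values of $f_P$; since the denominators in $\alpha_P$ are strictly positive at the limit, the rational expression defining $\alpha_{P_i}$ is continuous there, giving $\alpha_{P_i}(W,X)\to\alpha_P(W,X)$ $P$-a.s.

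Condition \ref{cond:alpha_convergence_1} is the main obstacle, because the support sets of the approximating $P_i$ are arbitrary and $P$ itself is not assumed simple. In Examples \ref{ex:ATE_proximal}, \ref{ex:ATE_IV}, \ref{ex:LATE} the plan is to exploit the binary sign factor $(2A-1)$ or $(2W-1)$: on each $X$-cell of $P_i$, a direct computation shows that the conditional variance of $\alpha_{P_i}(W,X)$ given $X$ equals $1/[p_i(1-p_i)]$, where $p_i$ is the relevant binary conditional probability under $P_i$, which is strictly positive so long as both binary values appear in the $W$-support of the cell. The assumption that $P$ satisfies \eqref{eq:cont} already forces both binary conditional probabilities under $P$ to be strictly positive $P_X$-a.s.; combined with a.e.\ convergence $f_{P_i}\to f_P$ and the positivity of the cell values of $f_{P_i}$ on its support, this ensures that eventually both binary values appear in every $X$-cell of $P_i$ of positive $P_i$-measure. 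Example \ref{ex:np_IV} is handled analogously, with non-constancy of $\omega/f_{P_i,W}$ inherited from $\omega/f_{P,W}$ by any sufficiently fine simple approximation.

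The hardest part will be making the ``eventually both binary values appear'' argument rigorous when the support structure of $P_i$ is not controlled a priori; the key observation is that if some cell of $P_i$ omits a binary value that $f_P$ charges with positive mass, then $f_{P_i}$ cannot be close to $f_P$ in $\mu$-measure on that cell, contradicting a.e.\ convergence after passage to a subsequence.
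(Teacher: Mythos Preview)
Your treatment of Conditions \ref{cond:alpha_coheres} and \ref{cond:alpha_convergence_2} matches the paper's: write $\alpha_P$ as an explicit ratio of marginals of $f_P$, observe that a rectangular simple density yields rectangular simple marginals with strictly positive cell values, and for Condition \ref{cond:alpha_convergence_2} pass to the limit cell by cell using continuity of the rational expression at the (strictly positive) limiting denominators. For Condition \ref{cond:alpha_convergence_1} in Examples \ref{ex:ATE_IV} and \ref{ex:LATE}, your direct computation $\mathrm{var}_{P_i}\{\alpha_{P_i}(W,X)\mid X\}=1/[p_i(1-p_i)]$ is correct (since $E_{P_i}\{\alpha_{P_i}\mid X\}=0$ identically) and gives a cleaner route than the paper's contradiction argument via Borel--Cantelli.

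There is, however, a genuine gap for Example \ref{ex:ATE_proximal}. There $X=(A,L)$ with $A$ binary and $\alpha_P(W,A,L)=(2A-1)/f_{P,A\mid W,L}(A\mid W,L)$, so the conditional variance in Condition \ref{cond:alpha_convergence_1} is taken over $W$ with $A$ \emph{fixed}. The factor $(2A-1)$ is constant under that conditioning, and what you need strictly positive is $\mathrm{var}_{P_i}\bigl\{1/f_{P_i,A\mid W,L}(A\mid W,L)\,\big|\,A,L\bigr\}$, which vanishes precisely when $A\perp W\mid L$ under $P_i$ on the given $L$-cell --- not when one of the two values of $A$ fails to appear. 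Your formula $1/[p_i(1-p_i)]$ therefore does not apply here, and your ``eventually both binary values appear in every cell'' argument targets the wrong failure mode. The paper closes this case by contradiction: assuming a subsequence with zero conditional variance, it uses $\|P_{i_q}-P\|_{TV}<1/q^2$ together with Borel--Cantelli to transfer the $P_{i_q}$-a.s.\ statement ``$\alpha_{P_{i_q}}$ is $(A,L)$-measurable'' to a $P$-a.s.\ statement holding for all large $q$, then combines this with $\alpha_{P_{i_q}}(W,A,L)\to\alpha_P(W,A,L)$ $P$-a.s.\ (from a.e.\ convergence of the marginals $f_{P_{i_q},(W,A,L)}$ and $f_{P_{i_q},(W,L)}$) to conclude that $\alpha_P$ is itself $(A,L)$-measurable, contradicting the hypothesis $\mathrm{var}_P\{\alpha_P\mid A,L\}>0$. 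Your sketch for Example \ref{ex:np_IV} has a related problem: ``sufficiently fine simple approximation'' is the wrong quantifier, since Condition \ref{cond:alpha_convergence_1} must hold for \emph{every} sequence of simple $P_i$ with $f_{P_i}\to f_P$ a.e., and non-constancy of $\omega/f_{P_i,W}$ does not follow directly from non-constancy of $\omega/f_{P,W}$ without a limiting argument of the same flavor.
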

\begin{proof}

We prove the propostion for Examples \ref{ex:ATE_proximal} and \ref{ex:ATE_IV}. The proofs for the other examples are very similar.

We begin with Example \ref{ex:ATE_proximal}. In this case $X=(A,L)$, with $A$ binary, and $\alpha_{P}(W,A,L)=(2A-1)/f_{P,A\mid(W,L)}(A\mid W,L)$. Note that for all $P$ that satisfies \eqref{eq:cont}  it holds that $f_{P,A\mid(W,L)}(a\mid W,L)>0$ for $a\in\lbrace 0, 1\rbrace$, almost surely under $P$.

We will first show that Condition \ref{cond:alpha_coheres} holds. Let $P$ be a law in $\mathcal{P}(\mu)$ with a density that is a rectangular non-negative simple function. Since $A$ is binary, we can assume that $P$ is such that
    $f_{P}$ coincides $\mu$-almost everywhere with a function of the form
    \begin{align}
        (y,z,w,a,l)\mapsto \sum\limits_{h=1}^{k^{i}_{Y}}\sum\limits_{g=1}^{k^{i}_{Z}}
        \sum\limits_{j=1}^{k^{i}_{W}} \sum\limits_{t=0}^{1} \sum\limits_{m=1}^{k^{i}_{L}} \pi^{i}_{h,g,j,t,m} I\lbrace y\in S^{Y,i}_{h} \rbrace  I\lbrace z\in S^{Z,i}_{g} \rbrace I\lbrace w\in S^{W,i}_{j} \rbrace I\lbrace t=a \rbrace I\lbrace l \in S^{L,i}_{m} \rbrace.
        \label{eq:simple_proximal}
    \end{align}
Then, if $(W,L)\in  S^{W}_{j} \times S^{L}_{m}$ for some $j,m$, which happens with probability one under $P$,  for $a\in\lbrace 0, 1\rbrace$ 
\begin{align*}
    f_{P,A\mid W,L}(a\mid W,L) = \frac{f_{P,(W,A,L)}(W,a,L)}{f_{P,(W,L)}(W,L)}&=
    \frac{\int f_{P,(Y,Z,W,A,L)}(y,z,W,a,L)d\mu_{Y}d\mu_{Z} }{\int f_{P,(Y,Z,W,A,L)}(y,z,W,a,L)d\mu_{Y}d\mu_{Z}d\mu_{A}}
    \\
    &=\frac{\sum_{h,g} \pi_{h,g,j,a,m} \mu_{Z}(S^{Z}_{g}) \mu_{Y}(S^{Y}_{h}) }{\sum_{h,g,t} \pi_{h,g,j,t,m} \mu_{Z}(S^{Z}_{g}) \mu_{Y}(S^{Y}_{h}) }>0.
\end{align*}
Hence, there exists a constant $c>0$ such that $f_{P,A\mid W,L}(a\mid W,L)>c$ for all $a\in\lbrace 0, 1\rbrace$, almost surely under $P_{i}$. This implies that $P$ satisfies \eqref{eq:cont}.
It follows that with probability one under $P$,
\begin{align}
    \alpha_{P}(W,A,L)= \sum\limits_{j,a,m} \alpha_{j,a,m} \: I\lbrace W\in S^{W}_{j}\rbrace I\lbrace A=a \rbrace I\lbrace L\in S^{L}_{m}\rbrace, 
    \label{eq:alpha_proximal}
\end{align}
where
\begin{align}
    \alpha_{j,a,m}=(2a-1)\frac{\sum_{h,g,t} \pi_{h,g,j,t,m} \mu_{Z}(S^{Z}_{g}) \mu_{Y}(S^{Y}_{h})}{\sum_{h,g} \pi_{h,g,j,a,m} \mu_{Z}(S^{Z}_{g}) \mu_{Y}(S^{Y}_{h})},
    \nonumber
\end{align}
which shows Condition \ref{cond:alpha_coheres} holds.

% Then, if $(W,L)\in  S^{W,i}_{j} \times S^{L,i}_{m}$ for some $j,m$, which happens with probability one under $P_{i}$,  for $a\in\lbrace 0, 1\rbrace$ 
% \begin{align*}
%     f_{P_{i},A\mid W,L}(a\mid W,L) = \frac{f_{P_{i},(W,A,L)}(W,a,L)}{f_{P_{i},(W,L)}(W,L)}&=
%     \frac{\int f_{P_{i},(Y,Z,W,A,L)}(y,z,W,a,L)d\mu_{Y}d\mu_{Z} }{\int f_{P_{i},(Y,Z,W,A,L)}(y,z,W,a,L)d\mu_{Y}d\mu_{Z}d\mu_{A}}
%     \\
%     &=\frac{\sum_{h,g} \pi^{i}_{h,g,j,a,m} \mu_{Z}(S^{Z,i}_{g}) \mu_{Y}(S^{Y,i}_{h}) }{\sum_{h,g,t} \pi^{i}_{h,g,j,t,m} \mu_{Z}(S^{Z,i}_{g}) \mu_{Y}(S^{Y,i}_{h}) }>0.
% \end{align*}
% Hence, there exists a constant $c>0$ such that $f_{P_{i},A\mid W,L}(a\mid W,L)>c$ for all $a\in\lbrace 0, 1\rbrace$, almost surely under $P_{i}$. This implies that $P_{i}$ satisfies \eqref{eq:cont}.
% It follows that with probability one under $P_{i}$,
% \begin{align}
%     \alpha_{P_{i}}(W,A,L)= \sum\limits_{j,a,m} \alpha^{i}_{j,a,m} \: I\lbrace W\in S^{W,i}_{j}\rbrace I\lbrace A=a \rbrace I\lbrace L\in S^{L,i}_{m}\rbrace, 
%     \label{eq:alpha_proximal}
% \end{align}
% where
% \begin{align}
%     \alpha_{j,a,m}=(2a-1)\frac{\sum_{h,g,t} \pi^{i}_{h,g,j,t,m} \mu_{Z}(S^{Z}_{g}) \mu_{Y}(S^{Y}_{h})}{\sum_{h,g} \pi^{i}_{h,g,j,a,m} \mu_{Z}(S^{Z}_{g}) \mu_{Y}(S^{Y}_{h})},
%     \nonumber
% \end{align}
% which shows Condition \ref{cond:alpha_coheres} holds. 

Next, we will verify that Condition \ref{cond:alpha_convergence_1} holds. 
Let $P$ be a law in $\mathcal{P}(\mu)$ that satisfies \eqref{eq:cont}. Let $(P_{i})_{i\geq 1}$ be a sequence of laws in $\mathcal{P}(\mu)$  such that $P_{i}$ satisfies \eqref{eq:cont} for all $i$ and $f_{P_{i}} \overset{\text{a.e. }\mu}{\to} f_{P}$. 
We need to show that there exists a subsequence $i_{q}$ such that $\alpha_{P_{i_{q}}}(W,A,L)$ converges to $\alpha_{P}(W,A,L)$ almost surely under $P$. 
Note that since $f_{P_{i}} \overset{\text{a.e. }\mu}{\to} f_{P}$, by Scheffe's lemma $f_{P_{i}} \convlp f_{P}$. Then
\begin{align*}
    &f_{P_{i},(W,A,L)}(w,a,l)=\int f_{P_{i}}(y,z,w,a,l) d\mu_{Y}d\mu_{Z} \convlp \int f_{P}(y,z,w,a,l) d\mu_{Y}d\mu_{Z} = f_{P,(W,A,L)}(w,a,l)
    \\
    &f_{P_{i},(W,L)}(w,l)=\int f_{P_{i}}(y,z,w,a,l) d\mu_{Y}d\mu_{Z}d\mu_{A} \convlp \int f_{P}(y,z,w,a,l) d\mu_{Y}d\mu_{Z}d\mu_{A}  = f_{P,(W,L)}(w,l).
\end{align*}
Thus, since convergence in $L^{1}(\mu)$ implies convergence in measure under $\mu$, and every sequence converging in measure has a subsequence converging almost surely, we can choose a subsequence $i_{q}$ so that 
\begin{align*}
    &f_{P_{i_{q}},(W,A,L)} \convas f_{P,(W,A,L)}
    \\
    &f_{P_{i_{q}},(W,L)} \convas f_{P,(W,L)}.
\end{align*}
holds. Since $P\in \mathcal{P}(\mu)$
\begin{align}
    &f_{P_{i_{q}},(W,A,L)} \to f_{P,(W,A,L)} \text{ almost surely under }P,
    \label{eq:marginal_WAL_converges}
    \\
    &f_{P_{i_{q}},(W,L)} \to f_{P,(W,L)}
    \text{ almost surely under }P
    .
    \label{eq:marginal_WL_converges}
\end{align}
Let $B_{pos}$ be the event where $f_{P,(W,L)}(W,L)>0$, $f_{P,(W,A,L)}(W,a,L)>0$ for $a\in\lbrace 0, 1\rbrace$, $f_{P_{i_{q}},(W,L)} \to f_{P,(W,L)}$ and $f_{P_{i_{q}},(W,A,L)} \to f_{P,(W,A,L)}$  hold. Then by \eqref{eq:marginal_WAL_converges} and \eqref{eq:marginal_WL_converges},
$P(B_{pos})=1$. In the event $B_{pos}$
$$
\alpha_{P_{i_{q}}}(W,A,L)=\frac{2A-1}{f_{P_{i_{q}},A\mid W,L}(A\mid W,L)}  \to \frac{2A-1}{f_{P,A\mid W,L}(A\mid W,L)},
$$
which is what we wanted to show.

Next, we turn to Example \ref{ex:ATE_IV}. In this case $\alpha_{P}(W,X)=(2W-1)/f_{P,W\mid X}(W\mid X)$. Note that for all $P$ that satisfies \eqref{eq:cont} it holds that $f_{P,W\mid X}(w\mid X)>0$ for $w\in\lbrace 0, 1\rbrace$, almost surely under $P$.

We will first show that Condition \ref{cond:alpha_coheres} holds. Let $P$ be a law in $\mathcal{P}(\mu)$ with a density that is a rectangular non-negative simple function. Since $W$ and $Z$ are binary, we can assume that $P$ is such that $f_{P}$ coincides $\mu$-almost everywhere with a function of the form
    \begin{align}
        (y,z,w,x)\mapsto \sum\limits_{h=1}^{k_{Y}}\sum\limits_{l=0}^{1}
        \sum\limits_{j=0}^{1} \sum\limits_{m=1}^{k_{X}} \pi_{h,l,j,m} I\lbrace y\in S^{Y}_{h} \rbrace  I\lbrace z=l \rbrace I\lbrace w=j \rbrace I\lbrace x \in S^{X}_{m} \rbrace.
        \label{eq:simple_ateIV}
    \end{align}
Then, if $X\in  S^{X}_{m} $, which happens with probability one under $P$,  for $w\in\lbrace 0, 1\rbrace$ 
\begin{align*}
    f_{P,W\mid X}(w\mid X) = \frac{f_{P,(W,X)}(w,X)}{f_{P,X}(X)}&=
    \frac{\int f_{P,(Y,Z,W,X)}(y,z,w,X)d\mu_{Y}d\mu_{Z} }{\int f_{P,(Y,Z,W,X)}(y,z,W,X)d\mu_{Y}d\mu_{Z}d\mu_{W}}
    \\
    &=\frac{\sum_{h,l} \pi_{h,l,w,m}  \mu_{Y}(S^{Y}_{h}) }{\sum_{h,l,j} \pi_{h,l,j,m}  \mu_{Y}(S^{Y}_{h}) }>0.
\end{align*}
Hence, there exists a constant $c>0$ such that  $f_{P,W\mid X}(w\mid X)>c$ for all $w\in\lbrace 0, 1\rbrace$, almost surely under $P$. Thus $P$ satisfies \eqref{eq:cont}.
It follows that with probability one under $P$,
\begin{align}
    \alpha_{P}(W,X)= \sum\limits_{j,m} \alpha_{j,m} \: I\lbrace w=j \rbrace I\lbrace X\in S^{X}_{m}\rbrace, 
    \label{eq:alpha_ateIV}
\end{align}
where
\begin{align}
    \alpha_{j,m}=(2w-1)\frac{\sum_{h,l,j} \pi_{h,l,j,m}  \mu_{Y}(S^{Y}_{h})}{\sum_{h,l} \pi_{h,l,w,m}  \mu_{Y}(S^{Y}_{h}) }
    \nonumber
\end{align}
which shows Condition \ref{cond:alpha_coheres} holds. 

Next, we will verify that Condition \ref{cond:alpha_convergence_1} holds. Let $P$ be a law in $\mathcal{P}(\mu)$ that satisfies \eqref{eq:cont}.
Let $(P_{i})_{i\geq 1}$ be a sequence of laws in $\mathcal{P}(\mu)$  such that $P_{i}$ satisfies \eqref{eq:cont} for all $i$ and $f_{P_{i}} \overset{\text{a.e. }\mu}{\to} f_{P}$. 
We need to show that there exists a subsequence $i_{q}$ such that $\alpha_{P_{i_{q}}}(W,X)$ converges to $\alpha_{P}(W,X)$ almost surely under $P$. 

Note that since $f_{P_{i}} \overset{\text{a.e. }\mu}{\to} f_{P}$, by Scheffe's lemma $f_{P_{i}} \convlp f_{P}$. Then,
\begin{align*}
    &f_{P_{i},(W,X)}\convlp f_{P,(W,X)},
    \\
    &f_{P_{i},X} \convlp  f_{P,X}.
\end{align*}
Thus, since convergence in $L^{1}(\mu)$ implies convergence in measure under $\mu$, and every sequence converging in measure has a subsequence converging almost surely, we can choose a subsequence $i_{q}$ so that 
\begin{align}
    &f_{P_{i_{q}},(W,X)} \convas f_{P,(W,X)}
    \nonumber
    \\
    &f_{P_{i_{q}},X} \convas f_{P,X}.
    \nonumber
\end{align}
also holds. Thus, since $P\in \mathcal{P}(\mu)$
\begin{align}
    &f_{P_{i_{q}},(W,X)} \to f_{P,(W,X)} \text{ almost surely under }P,
    \label{eq:marginal_WAL_converges_IV}
    \\
    &f_{P_{i_{q}},X} \to f_{P,X}
    \text{ almost surely under }P
    .
    \label{eq:marginal_WL_converges_IV}
\end{align}

Let $B_{pos}$ be the event where $f_{P,X}(X)>0$, $f_{P,(W,X)}(w,X)>0$ for $w\in\lbrace 0, 1\rbrace$, $f_{P_{i_{q}},(W,X)} \to f_{P,(W,X)}$ and $f_{P_{i_{q}},X} \to f_{P,X}$  hold. Then by \eqref{eq:marginal_WAL_converges_IV} and \eqref{eq:marginal_WL_converges_IV},
$P(B_{pos})=1$. In the event $B_{pos}$
$$
\alpha_{P_{i_{q}}}(W,X)=\frac{2W-1}{f_{P_{i_{q}},W\mid X}(W\mid X)}  \to \frac{2W-1}{f_{P,W\mid X}(W\mid X)},
$$
which is what we wanted to show.
\end{proof}

\bibliographystyle{apalike}
\bibliography{testability}
\end{document}